\documentclass[11pt, reqno]{amsart}
\usepackage{amssymb,latexsym,amsmath,amsfonts,mathdots,enumitem}
\usepackage{latexsym}
\usepackage[mathscr]{eucal}
\usepackage{colortbl,xcolor}
\usepackage{lmodern}
\usepackage{sansmathaccent}
\usepackage[latin1]{inputenc}
\usepackage{tikz}
\usepackage{physics}
\usetikzlibrary{shapes,arrows}
\usetikzlibrary{matrix,calc,shapes,arrows,positioning}
\pdfmapfile{+sansmathaccent.map}

\numberwithin{equation}{section}
\theoremstyle{plain}
\newtheorem{exam}{Example}

\newtheorem{thm}{Theorem}[section]

\newtheorem{cor}[thm]{Corollary}
\newtheorem{lem}[thm]{Lemma}
\newtheorem{prop}[thm]{Proposition}
\theoremstyle{definition}
\newtheorem{defn}[thm]{Definition}
\newtheorem{rem}[thm]{Remark}

\numberwithin{equation}{section}

\def\a{\alpha}

\def\beq{\begin{eqnarray}}
	\def\eeq{\end{eqnarray}}
\def\beqa{\begin{eqnarray*}}
	\def\eeqa{\end{eqnarray*}}

\def\beqn{\begin{equation}}
	\def\eeqn{\end{equation}}

\def\mg#1{}

\renewcommand{\epsilon}{\varepsilon}
\renewcommand{\phi}{\varphi}

\renewcommand{\sf}[1]{\textsf{#1}}

\numberwithin{equation}{section}
\allowdisplaybreaks[4] 

\setlist[enumerate]{font=\upshape,noitemsep, topsep=0pt} 
\setlist[itemize]{noitemsep, topsep=0pt}

\begin{document}
	
	\title{Interpolation by Rational $\Gamma_{E(3; 3; 1, 1, 1)}$-Inner Function and Rational $\Gamma_{E(3; 2; 1, 2)}$-Inner Function}
	\author{Avijit Pal,\hspace{0.1 cm} Bhaskar Paul}
	
	\address[A. Pal]{Department of Mathematics, IIT Bhilai, 6th Lane Road, Jevra, Chhattisgarh 491002}
	\email{A. Pal:avijit@iitbhilai.ac.in}
	
	\address[B. Paul]{Department of Mathematics, IIT Bhilai, 6th Lane Road, Jevra, Chhattisgarh 491002}
	\email{B. Paul:bhaskarpaul@iitbhilai.ac.in }
	
	\subjclass[2010]{32F45, 30E05, 93B36, 93B50}
	
	\keywords{$\Gamma_{E(3; 3; 1, 1, 1)} $-inner function, $\Gamma_{E(3; 2; 1, 2)}$-inner function, Nevanlinna-Pick problem, Blaschke product, $\mu$-synthesis domains, Distinguish boundary}
	
	\maketitle
	
	\begin{abstract}
We study several function-theoretic aspects of the domains
$G_{E(3;3;1,1,1)}$ and $G_{E(3;2;1,2)}$. We introduce and investigate
the notions of $\Gamma_{E(3;3;1,1,1)}$-inner functions and
$\Gamma_{E(3;2;1,2)}$-inner functions, and establish their relationships
with $\Gamma_{E(2;2;1,1)}$-inner functions and
$\Gamma_{E(2;1;2)}$-inner functions. We provide explicit constructions
and representations of rational
$\Gamma_{E(3;3;1,1,1)}$-inner functions and rational
$\Gamma_{E(3;2;1,2)}$-inner functions. As an application of these
results, we obtain partial solutions to the associated interpolation
problems and construct explicit rational inner interpolants for
interpolation data lying in suitable subsets of
$G_{E(3;3;1,1,1)}$ and $G_{E(3;2;1,2)}$, respectively.
\end{abstract}	
\section{Introduction}\label{Intro}

The \textit{Nevanlinna--Pick interpolation problem} is a classical and
active area of research in complex analysis. Its deep connections with
complex geometry and operator theory have led to the study of various
operator-theoretic and spectral versions of the interpolation problem.
In particular, the \textit{spectral Nevanlinna--Pick interpolation
problem} has attracted considerable attention from both operator
theorists and complex analysts. Given distinct points
\(
z_1,\ldots,z_m\in\mathbb D
\)
and matrices
\(
W_1,\ldots,W_m\in M_n(\mathbb C),
\)
the spectral Nevanlinna--Pick interpolation problem asks whether there
exists an analytic matrix-valued function
\(
F:\mathbb D\longrightarrow M_n(\mathbb C)
\)
such that
\[
F(z_i)=W_i,\quad 1\leq i\leq m,\quad\text{
and}\quad
r(F(z))\leq 1,\quad z\in\mathbb D,
\]
where $r(\cdot)$ denotes the spectral radius. While the classical
Nevanlinna--Pick interpolation problem is completely characterized by
the positivity of the associated Pick matrix, its spectral analogue is
considerably more subtle.
Let $M_n(\mathbb C)$ denote the algebra of all $n\times n$ complex
matrices, and let $E$ be a linear subspace of $M_n(\mathbb C)$. The
\textit{structured singular value} associated with $E$ is the function
\(
\mu_E:M_n(\mathbb C)\longrightarrow[0,\infty)
\)
defined by
\begin{equation}\label{mu}
\mu_E(A)
:=
\frac{1}{
\inf\{\|X\|:\det(I-AX)=0,\ X\in E\}
},
\qquad A\in M_n(\mathbb C),
\end{equation}
with the convention that $\mu_E(A)=0$ if $I-AX$ is nonsingular for
every $X\in E$ \cite{aj}. Throughout the paper, $\|\cdot\|$ denotes the
operator norm. For positive integers $r_1,\ldots,r_s$ satisfying
\(
r_1+\cdots+r_s=n,
\)
we consider the linear subspace
\begin{equation}\label{ls}
\begin{aligned}
E(n;s;r_1,\ldots,r_s)
:=
\Big\{
\operatorname{diag}
[z_1I_{r_1},\ldots,z_sI_{r_s}]
\in M_n(\mathbb C):
z_1,\ldots,z_s\in\mathbb C
\Big\}.
\end{aligned}
\end{equation}
In this paper, we are particularly interested in the structured
singular value domains associated with
\(
E(3;3;1,1,1),
E(3;2;1,2),
E(2;2;1,1),~\text{and}~
E(2;1;2).
\)
We recall the relevant closed domains
$\Gamma_{E(3;3;1,1,1)}$,
$\Gamma_{E(3;2;1,2)}$,
$\Gamma_{E(2;2;1,1)}$, and
$\Gamma_{E(2;1;2)}$;(see
\cite{Abouhajar,ay1,Bharali,apal1}).
Let $A_{(ij)}=\left(\begin{smallmatrix} a_{ii} & a_{ij}\\a_{ji} & a_{jj}\end{smallmatrix}\right).$ The domain associated with $E(2;2;1,1)$ is given by
\begin{equation*}
\begin{aligned}
\Gamma_{E(2;2;1,1)}
:=
\Big\{
(a_{11},a_{22},\det A)\in\mathbb C^3:
A=(a_{ij})_{i,j=1}^2\in M_2(\mathbb C),
\ \|A\|\leq1
\Big\}.
\end{aligned}
\end{equation*}
For $E(3;3;1,1,1)$, we have
\begin{equation*}
\begin{aligned}
\Gamma_{E(3;3;1,1,1)}
:=
\Big\{
\mathbf{x}=(a_{11},a_{22},\det A_{(12)},a_{33},
\det A_{(13)},\det A_{(23)},\det A)\in\mathbb C^7:
\\
\hspace{1cm}
A=(a_{ij})_{i,j=1}^3\in M_3(\mathbb C),
\quad
\mu_{E(3;3;1,1,1)}(A)\leq1
\Big\}.
\end{aligned}
\end{equation*}
Similarly,
\begin{equation*}
\begin{aligned}
\Gamma_{E(3;2;1,2)}
:=
\Big\{
(a_{11},\det A_{(12)}+\det A_{(13)},\det A,
a_{22}+a_{33},\det A_{(23)})\in\mathbb C^5:
\\
\hspace{1cm}
A\in M_3(\mathbb C),
\quad
\mu_{E(3;2;1,2)}(A)\leq1
\Big\},
\end{aligned}
\end{equation*}
and
\begin{equation*}
\begin{aligned}
\Gamma_{E(2;1;2)}
:=
\Big\{
(a_{11}+a_{22},\det A)\in\mathbb C^2:
A\in M_2(\mathbb C),
\ \|A\|\leq1
\Big\}.
\end{aligned}
\end{equation*}
The corresponding open domains will be denoted by
$G_{E(3;3;1,1,1)}$,
$G_{E(3;2;1,2)}$,
$G_{E(2;2;1,1)}$, and
$G_{E(2;1;2)}$, respectively.
The domains $G_{E(3;2;1,2)}$,
$G_{E(2;2;1,1)}$, and $G_{E(2;1;2)}$ are closely related to the
$\mu_{1,3}$-quotient, the tetrablock, and the symmetrized bidisc,
respectively; (see \cite{Abouhajar,ay1,Bharali}).

Several characterizations of
$G_{E(3;3;1,1,1)}$ were established in \cite{apal1}. In particular,
from the defining properties of $G_{E(3;3;1,1,1)}$, it follows that,
for every
\(
\mathbf{x}=(x_1,\ldots,x_7)\in G_{E(3;3;1,1,1)}
\)
and every $z,w\in\overline{\mathbb D}$,
\[
\begin{aligned}
1-zx_2-wx_4+zwx_6\neq0,\quad
1-zx_1-wx_4+zwx_5\neq0,\quad
1-zx_1-wx_2+zwx_3\neq0.
\end{aligned}
\]
Consequently, the following rational functions are well defined on
$\overline{\mathbb D}^{\,2}\times G_{E(3;3;1,1,1)}$:
\begin{equation}\label{Psi_1}
\begin{aligned}
\Psi^{(1)}(z_2,z_3,\mathbf{x})
&=
\frac{
x_1-z_2x_3-z_3x_5+z_2z_3x_7
}{
1-z_2x_2-z_3x_4+z_2z_3x_6
}.
\end{aligned}
\end{equation}
Similarly,
\begin{equation}\label{Psi_2}
\begin{aligned}
\Psi^{(2)}(z_1,z_3,\mathbf{x})
&=
\frac{
x_2-z_1x_3-z_3x_6+z_1z_3x_7
}{
1-z_1x_1-z_3x_4+z_1z_3x_5
}.
\end{aligned}
\end{equation}
Finally,
\begin{equation}\label{Psi_3}
\begin{aligned}
\Psi^{(3)}(z_1,z_2,\mathbf{x})
&=
\frac{
x_4-z_1x_5-z_2x_6+z_1z_2x_7
}{
1-z_1x_1-z_2x_2+z_1z_2x_3
}.
\end{aligned}
\end{equation}
The domain $G_{E(3;2;1,2)}$ was characterized in
\cite[Theorem 3.5]{Bharali}. In particular, if
\(
\widetilde{\mathbf{x}}
=
(x_1,x_2,x_3,y_1,y_2)
\in G_{E(3;2;1,2)},
\)
then
\(
y_2z^2-y_1z+1\neq0, z\in\overline{\mathbb D}.
\)
Therefore, the rational function
\begin{equation}\label{Psi}
\begin{aligned}
\Psi_3(z,\widetilde{\mathbf{x}})
&=
\frac{x_3z^2-x_2z+x_1}
     {y_2z^2-y_1z+1},
\quad z\in\overline{\mathbb D},
\end{aligned}
\end{equation}
is well defined.
We define the natural polynomial maps 
\begin{equation*}
\begin{aligned}
\Pi_{E(3;3;1,1,1)}:M_3(\mathbb C)
\longrightarrow\mathbb C^7\quad \text{and}\quad \Pi_{E(3;2;1,2)}:M_3(\mathbb C)&\longrightarrow\mathbb C^5
\end{aligned}
\end{equation*} by
\begin{equation*}
\begin{aligned}
\Pi_{E(3;3;1,1,1)}(A)
=
(a_{11},a_{22},\det A_{(12)},a_{33},
\det A_{(13)},\det A_{(23)},\det A)
\end{aligned}
\end{equation*}  and 
\begin{equation*}
\begin{aligned}
\Pi_{E(3;2;1,2)}(A)
=
(a_{11},\det A_{(12)}+\det A_{(13)},\det A,
a_{22}+a_{33},\det A_{(23)}),
\end{aligned}
\end{equation*} respectively.
Let $\mathcal U(3)$ denote the group of all $3\times3$ unitary
matrices. We define
\begin{equation*}
\begin{aligned}
K
:=
\Big\{
\mathbf{x}=(x_1,\ldots,x_7)\in
\Gamma_{E(3;3;1,1,1)}:
|x_7|=1,\quad
x_i=\overline{x}_{7-i}x_7,\quad 1\leq i\leq6
\Big\},
\end{aligned}
\end{equation*}
and
\begin{equation*}
\begin{aligned}
K_1
:=
\Big\{
\widetilde{\mathbf{x}}=(x_1,x_2,x_3,y_1,y_2)
\in\Gamma_{E(3;2;1,2)}:
|x_3|=1,\quad
x_1=\overline{y}_2x_3,\quad
x_2=\overline{y}_1x_3
\Big\}.
\end{aligned}
\end{equation*}
We shall also use the open spectral ball
\begin{equation}\label{Omega_n}
\begin{aligned}
\Omega_n
:=
\{A\in M_n(\mathbb C):r(A)<1\}.
\end{aligned}
\end{equation}
Since
\(
r(A)\leq\|A\|,
\)
we have
\begin{equation}\label{Omega_n in B_MnC}
B_{M_n(\mathbb C)}(0,1)\subseteq\Omega_n,
\end{equation}
where $B_{M_n(\mathbb C)}(0,1)$ denotes the open unit ball of
$M_n(\mathbb C)$ with respect to the operator norm. In this paper, we
shall primarily work with $\Omega_3$.
We begin by recalling
some standard notation for reflected polynomial.
Let $f$ be a polynomial of degree at most $n$, where $n\geq 0$. We define
the \emph{reflected polynomial} $f^{\sim n}$ by
\begin{equation}\label{f sim}
\begin{aligned}
f^{\sim n}(z)
&=
z^n\overline{f(1/\overline{z})}.
\end{aligned}
\end{equation}
We also define
\begin{equation}\label{f vee}
\begin{aligned}
f^{\vee}(z)
&=
\overline{f(\overline{z})}.
\end{aligned}
\end{equation}
It follows immediately from the definitions that
\begin{equation}\label{f sim, f vee}
\begin{aligned}
f^{\sim n}(z)
=
z^nf^{\vee}(1/z).
\end{aligned}
\end{equation}
Recall that a \textit{classical inner function} on $\mathbb D$ is a
holomorphic function which maps $\mathbb D$ into $\mathbb D$ and has
radial boundary values of modulus one almost everywhere on
$\mathbb T$. A rational inner function on $\mathbb D$ of degree at
most $n$ is a finite Blaschke product and, equivalently, can be written
in the form
\begin{equation}\label{Description of inner function}
f(z)
=
\frac{D^{\sim n}(z)}{D(z)},
\end{equation}
where $D$ is a polynomial of degree at most $n$ having no zeros in
$\overline{\mathbb D}$; (see, for example,
\cite[Corollary 6.10]{JAZLNY 1}).
Further details concerning inner functions associated with
$\Gamma_{E(2;1;2)}$ and $\Gamma_{E(2;2;1,1)}$ can be found in
\cite{JAZLNY,Alsalhi}. Motivated by these notions, we introduce
$\Gamma_{E(3;3;1,1,1)}$-inner functions and
$\Gamma_{E(3;2;1,2)}$-inner functions and develop their basic
function-theoretic properties.
The motivation for this work comes from interpolation theory. In the
scalar setting, finite solvable interpolation problems on the unit
disc admit finite Blaschke products as interpolants. In the
matrix-valued setting, rational matrix inner functions play an
analogous role. McCarthy and Young \cite{McCarthy 1} established the
existence of rational inner interpolants whenever a matrix-valued
interpolation problem admits a contractive analytic solution.
Costara \cite{costara} established an important connection between
classical matrix-valued interpolation problems and spectral
Nevanlinna--Pick interpolation problems through similarity
transformations. These developments motivate the study of inner
functions associated with the domains considered in this paper.
We now introduce the notions of
$\Gamma_{E(3;3;1,1,1)}$-inner functions and
$\Gamma_{E(3;2;1,2)}$-inner functions.
\begin{defn}\label{Defn 1}
\begin{enumerate}
\item
A holomorphic function
\(
\mathbf{x}=(x_1,\ldots,x_7):
\mathbb D\longrightarrow
\Gamma_{E(3;3;1,1,1)}
\)
is called a
\textit{$\Gamma_{E(3;3;1,1,1)}$-inner function} if its radial limit
\begin{equation}\label{Gamma_{E(3;3;1,1,1)}-inner function}
\lim_{r\to1^-}\mathbf{x}(rz)
\end{equation}
belongs to $K$ for almost every $z\in\mathbb T$ with respect to
Lebesgue measure.

\item
A holomorphic function
\(
\widetilde{\mathbf{x}}
=
(x_1,x_2,x_3,y_1,y_2):
\mathbb D\longrightarrow
\Gamma_{E(3;2;1,2)}
\)
is called a
\textit{$\Gamma_{E(3;2;1,2)}$-inner function} if its radial limit
\begin{equation}\label{Gamma_{E(3;2;1,2)}-inner function}
\lim_{r\to1^-}\widetilde{\mathbf{x}}(rz)
\end{equation}
belongs to $K_1$ for almost every $z\in\mathbb T$ with respect to
Lebesgue measure.
\end{enumerate}
\end{defn}

By Fatou's theorem, the radial limits in
\eqref{Gamma_{E(3;3;1,1,1)}-inner function} and
\eqref{Gamma_{E(3;2;1,2)}-inner function} exist almost everywhere on
$\mathbb T$ for bounded holomorphic coordinate functions. It is useful
to note that if $\mathbf{x}$ is a rational
$\Gamma_{E(3;3;1,1,1)}$-inner function, then each coordinate
$x_1,\ldots,x_7$ is holomorphic on $\mathbb D$ and has no poles in
$\overline{\mathbb D}$. Hence all the coordinate functions extend
continuously to $\overline{\mathbb D}$. Similarly, if
$\widetilde{\mathbf{x}}$ is a rational
$\Gamma_{E(3;2;1,2)}$-inner function, then the coordinate functions
$x_1,x_2,x_3,y_1,y_2$ have no poles in $\overline{\mathbb D}$ and
therefore extend continuously to $\overline{\mathbb D}$.

We conclude this introductory discussion with an outline of the
paper. In Section~\ref{Properties}, we establish various properties
of $\Gamma_{E(3;3;1,1,1)}$-inner functions and
$\Gamma_{E(3;2;1,2)}$-inner functions. Section~\ref{Degree} is devoted
to determining their degrees. In Section~\ref{Relation}, we investigate
their relationships with $\Gamma_{E(2;2;1,1)}$-inner functions and
$\Gamma_{E(2;1;2)}$-inner functions, as well as the relationships
between the two classes of inner functions introduced above.
Section~\ref{Example} contains several concrete examples of
$\Gamma_{E(3;3;1,1,1)}$-inner functions and
$\Gamma_{E(3;2;1,2)}$-inner functions. In Section~\ref{Description},
we obtain explicit descriptions of rational
$\Gamma_{E(3;3;1,1,1)}$-inner functions and rational
$\Gamma_{E(3;2;1,2)}$-inner functions. Finally, in
Section~\ref{Interpolation}, we obtain partial solutions to the
corresponding interpolation problems and construct rational
$\Gamma_{E(3;3;1,1,1)}$-inner and
$\Gamma_{E(3;2;1,2)}$-inner interpolants for suitable classes of
interpolation data in
$G_{E(3;3;1,1,1)}$ and $G_{E(3;2;1,2)}$, respectively.	
	
\section{Properties of $\Gamma_{E(3;3;1,1,1)}$-Inner Functions and
$\Gamma_{E(3;2;1,2)}$-Inner Functions}
\label{Properties}

We begin by characterizing the  sets
$K$ and $K_1$.  The
following characterization will be useful in describing the boundary
behavior of $\Gamma_{E(3;3;1,1,1)}$-inner functions.

\begin{prop}\label{Prop K}
	Let $\mathbf{x}=(x_1,\ldots,x_7)\in\mathbb{C}^7$. Then the following
	conditions are equivalent:
	\begin{enumerate}
		\item $\mathbf{x}\in K$;
		\item $\mathbf{x}\in\Gamma_{E(3;3;1,1,1)}$ and $|x_7|=1$.
	\end{enumerate}
\end{prop}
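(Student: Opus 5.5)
The implication (1)$\Rightarrow$(2) is immediate from the definition of $K$. All the work is in (2)$\Rightarrow$(1). So let $\mathbf{x}\in\Gamma_{E(3;3;1,1,1)}$ with $|x_7|=1$. We must show $x_i=\overline{x}_{7-i}x_7$ for $1\le i\le 6$. By the symmetry $i\leftrightarrow 7-i$ (and $|x_7|=1$), it suffices to prove the three relations
\[
x_1=\overline{x}_6x_7,\qquad x_2=\overline{x}_5x_7,\qquad x_3=\overline{x}_4x_7 .
\]

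I would use the characterization of $\Gamma_{E(3;3;1,1,1)}$ from \cite{apal1} via the linear fractional maps $\Psi^{(1)},\Psi^{(2)},\Psi^{(3)}$ and their closed-domain analogues. The key facts are:
\begin{itemize}
\item $\mathbf{x}\in\Gamma_{E(3;3;1,1,1)}$ implies $(x_1,x_2,x_3)$, $(x_1,x_4,x_5)$ and $(x_2,x_4,x_6)$ lie in the closed tetrablock $\Gamma_{E(2;2;1,1)}$.
\item For fixed $z_2,z_3\in\mathbb T$, the point $\bigl(\Psi^{(3)}\text{-type slices}\bigr)$ gives tetrablock points in the remaining coordinates; for instance
\[
\Bigl(\tfrac{x_1-z_3x_5}{1-z_3x_4},\ \tfrac{x_2-z_3x_6}{1-z_3x_4},\ \tfrac{x_3-z_3x_7}{1-z_3x_4}\Bigr)\in\Gamma_{E(2;2;1,1)}
\]
whenever the denominator is nonzero.
\end{itemize}

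Alternatively, and more concretely, I would realize $\mathbf{x}=\Pi_{E(3;3;1,1,1)}(A)$ and argue at the level of the cubic polynomial
\[
P(z_1,z_2,z_3)=\det(I-A\,\mathrm{diag}(z_1,z_2,z_3)).
\]
Its coefficients are $1,-x_1,-x_2,-x_4,x_3,x_5,x_6,-x_7$. The condition $\mu_{E}(A)\le 1$ means $P$ has no zeros in $\mathbb D^3$. The key step is then a Schur–Cohn / reflection argument, which I take in three stages.

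\begin{enumerate}
\item Fix $z_1,z_2\in\mathbb D$. Then $P$ is affine in $z_3$:
\[
P=(1-z_1x_1-z_2x_2+z_1z_2x_3)-z_3(x_4-z_1x_5-z_2x_6+z_1z_2x_7).
\]
Nonvanishing for $|z_3|<1$ gives $|\Psi^{(3)}(z_1,z_2,\mathbf{x})|\le 1$ wherever the denominator is nonzero.
\item Let $z_1,z_2\to\mathbb T$. The numerator $N(z_1,z_2)=x_4-z_1x_5-z_2x_6+z_1z_2x_7$ and the denominator $D(z_1,z_2)=1-z_1x_1-z_2x_2+z_1z_2x_3$ satisfy $|N|\le|D|$ on $\overline{\mathbb D}^2$, away from common zeros, by continuity.
\item Compare top coefficients. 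The coefficient of $z_1z_2$ in $N$ has modulus $|x_7|=1$, while the constant term of $D$ is $1$. The reflected polynomial $D^{\sim}(z_1,z_2)=z_1z_2\overline{D(1/\bar z_1,1/\bar z_2)}$ has top coefficient $1$ and constant term $\overline{x}_3$.
\end{enumerate}

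Consider $h=N-\overline{x}_7^{-1}\cdots$; more cleanly, consider the bidisc function $\phi=N/D$, bounded by $1$. On the torus, $|D^{\sim}|=|D|$. The key claim is that $|x_7|=1$ forces the equality $N=x_7D^{\sim}$, i.e.
\[
x_4=x_7\overline{x}_3,\qquad x_5=x_7\overline{x}_2,\qquad x_6=x_7\overline{x}_1,
\]
which are exactly the needed relations. To prove this claim, average over the torus:
\[
\int_{\mathbb T^2}\bigl(|D|^2-|N|^2\bigr)\,dm\ \ge 0 .
\]
By Parseval this equals
\[
\bigl(1+|x_1|^2+|x_2|^2+|x_3|^2\bigr)-\bigl(|x_4|^2+|x_5|^2+|x_6|^2+1\bigr).
\]
Doing the same with the roles of the index sets swapped, using $\Psi^{(1)}$ and $\Psi^{(2)}$ and the analogous affine decompositions, gives a family of inequalities that together force equality. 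Pointwise, $|D|^2-|N|^2\ge0$ on $\mathbb T^2$ with zero integral then gives $|N|=|D|=|D^{\sim}|$ on $\mathbb T^2$. A polynomial identity argument, together with matching the coefficient $x_7$ of $z_1z_2$ against $1$, yields $N=x_7D^{\sim}$.

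The main obstacle is this last step: getting equality rather than inequality. Parseval from a single slice gives only one inequality, so I would combine the three slices. Alternatively, I would use the known tetrablock fact (see \cite{Abouhajar}): for $(a,b,p)\in\Gamma_{E(2;2;1,1)}$ with $|p|=1$, one has $a=\bar b p$. Applying this to the slice tetrablock points, whose last coordinate $\frac{x_3-z_3x_7}{1-z_3x_4}$ is unimodular for suitable $z_3\in\mathbb T$, yields the relations after comparing coefficients in $z_3$. I would try this tetrablock reduction first.
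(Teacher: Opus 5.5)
\textbf{There is a genuine gap}, exactly at the step you flag, and neither of your two ways around it closes it.

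\emph{The Parseval route.} Write $a_i=|x_i|^2$ and use $|x_7|=1$. The three affine decompositions of $P$ (in $z_3$, in $z_1$, in $z_2$) give
\[
a_1+a_2+a_3\ge a_4+a_5+a_6,\qquad a_2+a_4+a_6\ge a_1+a_3+a_5,\qquad a_1+a_4+a_5\ge a_2+a_3+a_6 .
\]
Adding these in pairs yields only $a_1\ge a_6$, $a_2\ge a_5$, $a_4\ge a_3$. Put $p=a_1-a_6$, $q=a_2-a_5$, $s=a_4-a_3$. The system then reads $p+q\ge s$, $q+s\ge p$, $p+s\ge q$, which $p=q=s=1$ satisfies. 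So these inequalities do not force $\int_{\mathbb T^2}(|D|^2-|N|^2)\,dm=0$. The conclusion $|N|=|D|$ on $\mathbb T^2$ is therefore unsupported, and so is the further passage to $N=x_7D^{\sim}$, for which you give no argument either.

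\emph{The tetrablock reduction.} This is circular. For $z_3\in\mathbb T$ and $|x_7|=1$ one has $|x_3-z_3x_7|=|1-z_3x_7\overline{x_3}|$. Hence $\frac{x_3-z_3x_7}{1-z_3x_4}$ is unimodular for all $z_3\in\mathbb T$ if and only if $x_4=\overline{x_3}x_7$, which is one of the identities you are trying to prove. If that identity fails, unimodularity holds for at most two values of $z_3$, so there is nothing to compare coefficients over. The triples $(x_1,x_2,x_3)$, $(x_1,x_4,x_5)$, $(x_2,x_4,x_6)$ do not help either, since their last coordinates are not known to be unimodular.

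\textbf{The missing idea.} You need tetrablock points whose last coordinate is $x_7$ itself. The paper gets these by citing \cite[Theorem 2.29]{apal1}: $(x_i,x_{7-i},x_7)\in\Gamma_{E(2;2;1,1)}$. With $|x_7|=1$ these points lie in $b\Gamma_{E(2;2;1,1)}$ by \cite[Theorem 7.1]{Abouhajar}, and the distinguished-boundary description gives $x_i=\overline{x_{7-i}}x_7$ at once.

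\textbf{Repairing your slice argument.} Use the inequality rather than unimodularity. The slice point lies in $\Gamma_{E(2;2;1,1)}$, so its third coordinate has modulus at most one. By continuity, $|x_3-z_3x_7|\le|1-z_3x_4|$ for $z_3\in\overline{\mathbb D}$. On $\mathbb T$ this reads $|1-z_3u|\le|1-z_3v|$, with $u=x_7\overline{x_3}$ and $v=x_4$. Here $|u|,|v|\le 1$ by the tetrablock points you listed. Choose $z_3$ with $\operatorname{Re}(z_3(u-v))=-|u-v|$. This gives $|v|^2-|u|^2\ge 2|u-v|\ge 2\,\bigl|\,|v|-|u|\,\bigr|$. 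That forces $|u|=|v|$, and then $u=v$, i.e.\ $x_4=\overline{x_3}x_7$. The decompositions in $z_1$ and $z_2$ give $x_1=\overline{x_6}x_7$ and $x_2=\overline{x_5}x_7$ the same way.

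\textbf{A second self-contained route.} For $w\in\mathbb T^3$, the map $\lambda\mapsto P(\lambda w)$ is a cubic with constant term $1$, unimodular leading coefficient, and no zeros in $\mathbb D$. So all its zeros lie on $\mathbb T$ and it is self-inversive. This yields $P=-x_7P^{\sim}$ identically, and comparing coefficients gives all six relations.
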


\begin{proof}
	$(1)\Rightarrow(2)$.
	By the definition of $K$, we have
	\(
	\mathbf{x}\in\Gamma_{E(3;3;1,1,1)}~\text{and}~
	|x_7|=1.
	\)
	Thus $(2)$ follows immediately.

	$(2)\Rightarrow(1)$.
	Suppose that
	\(
	\mathbf{x}\in\Gamma_{E(3;3;1,1,1)}~\text{and}~
	|x_7|=1.
	\)
	By \cite[Theorem 2.29]{apal1}, we have
	\[
	(x_i,x_{7-i},x_7)\in\Gamma_{E(2;2;1,1)},
	\quad 1\leq i\leq 6.
	\]
	Since $|x_7|=1$, it follows from \cite[Theorem 7.1]{Abouhajar}
	that
	\[
	(x_i,x_{7-i},x_7)
	\in b\Gamma_{E(2;2;1,1)},
	\quad 1\leq i\leq 6.
	\]
	By the characterization of the distinguished boundary of
	$\Gamma_{E(2;2;1,1)}$, this implies
	\(
	x_i=\overline{x_{7-i}}x_7, 1\leq i\leq 6.
	\)
	Consequently, $\mathbf{x}\in K$, by the definition of $K$.
\end{proof}

The corresponding characterization for $K_1$ follows by an analogous
argument. We record it below for later use.

\begin{prop}\label{Prop K_1}
	Let
	\(
	\widetilde{\mathbf{x}}
	=(x_1,x_2,x_3,y_1,y_2)\in\mathbb{C}^5.
	\)
	Then the following conditions are equivalent:
	\begin{enumerate}
		\item $\widetilde{\mathbf{x}}\in K_1$;
		\item $\widetilde{\mathbf{x}}\in\Gamma_{E(3;2;1,2)}$ and
		$|x_3|=1$.
	\end{enumerate}
\end{prop}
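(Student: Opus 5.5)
The implication $(1)\Rightarrow(2)$ is immediate from the definition of $K_1$: membership in $K_1$ already requires $\widetilde{\mathbf{x}}\in\Gamma_{E(3;2;1,2)}$ and $|x_3|=1$. All the work is in $(2)\Rightarrow(1)$. There we must show that $\widetilde{\mathbf{x}}=(x_1,x_2,x_3,y_1,y_2)\in\Gamma_{E(3;2;1,2)}$ with $|x_3|=1$ forces $x_1=\overline{y}_2x_3$ and $x_2=\overline{y}_1x_3$. The plan mirrors the proof of Proposition~\ref{Prop K}, with the symmetrized bidisc $\Gamma_{E(2;1;2)}$ and the rational function $\Psi_3$ of \eqref{Psi} replacing the tetrablock.

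\emph{Step 1.} I would invoke the characterization of $\Gamma_{E(3;2;1,2)}$ in \cite[Theorem 3.5]{Bharali}, which is the closed form of the condition recorded for $G_{E(3;2;1,2)}$. It should give two facts for $\widetilde{\mathbf{x}}\in\Gamma_{E(3;2;1,2)}$:
\begin{enumerate}
\item $(y_1,y_2)\in\Gamma_{E(2;1;2)}$ and $(\overline{y}_1 x_3 \text{-type coordinates})$ satisfy the corresponding membership conditions;
\item $|\Psi_3(z,\widetilde{\mathbf{x}})|\le 1$ wherever the denominator $y_2z^2-y_1z+1$ does not vanish on $\overline{\mathbb D}$, or a suitable limiting version when $\widetilde{\mathbf{x}}$ lies on the boundary.
\end{enumerate}
If the cleanest route is available, I would use the analogue of \cite[Theorem 2.29]{apal1}: points of $\Gamma_{E(3;2;1,2)}$ yield points $(x_2/?,\dots)$ in $\Gamma_{E(2;1;2)}$ paired with $x_3$. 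Concretely I expect $(x_1,\overline{?})$-pairings of the form $(y_1,y_2)\in\Gamma_{E(2;1;2)}$ and $(x_2,x_3)$-type pairs.

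\emph{Step 2, the core computation, and the main obstacle.} Since $\Psi_3(\cdot,\widetilde{\mathbf{x}})$ is a rational function of degree at most $2$, I would compare its numerator $x_3z^2-x_2z+x_1$ with the reflected denominator $D^{\sim2}(z)$, where $D(z)=y_2z^2-y_1z+1$, so that
\[
D^{\sim 2}(z)=z^2\,\overline{D(1/\overline z)}=\overline{y}_2-\overline{y}_1z+z^2 .
\]
The goal is to show
\[
x_3z^2-x_2z+x_1=x_3\,D^{\sim2}(z).
\]
Comparing coefficients, this identity gives exactly $x_2=\overline{y}_1x_3$ and $x_1=\overline{y}_2x_3$.

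To prove the identity, set $P(z)=x_3z^2-x_2z+x_1$ and use $|\Psi_3|\le1$ on $\mathbb T$, i.e.\ $|P|\le|D|=|D^{\sim2}|$ on $\mathbb T$. Then use $|x_3|=1$, which says the leading coefficient of $P$ has the same modulus as the constant term of $D$, namely $1$. In terms of $\Psi_3^{\vee}$ at infinity, or equivalently the function $z\mapsto z^2\overline{P(1/\overline z)}/D(z)$, this means a bounded-by-one function attains modulus one at $z=0$. The maximum principle then forces $P=x_3D^{\sim2}$.

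Two points need care:
\begin{itemize}
\item if $D$ has zeros on $\mathbb T$, I would approximate by $r\widetilde{\mathbf{x}}$-type points in $G_{E(3;2;1,2)}$ and pass to the limit, or use the polynomial inequality directly;
\item degenerate degrees, when $y_2=0$, need separate handling.
\end{itemize}
I expect this to be the hardest step.

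\emph{Alternative route.} One could instead realize $\widetilde{\mathbf{x}}=\Pi_{E(3;2;1,2)}(A)$ with $\mu_{E(3;2;1,2)}(A)\le1$ and argue that $|\det A|=1$ forces the boundary relations. I would use that only as a fallback, since the symmetrized-bidisc distinguished-boundary characterization from \cite{JAZLNY,ay1}, applied as in Proposition~\ref{Prop K}, should be simpler.
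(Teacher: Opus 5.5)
Your argument is correct, but it takes a different route from the paper. The paper gives no separate proof. It says the argument is analogous to Proposition~\ref{Prop K}: pass from $\widetilde{\mathbf{x}}\in\Gamma_{E(3;2;1,2)}$ to the tetrablock triples $(x_1,y_2,x_3)$ and $(x_2/2,y_1/2,x_3)$ in $\Gamma_{E(2;2;1,1)}$ (the triples appearing in Lemma~\ref{Lem 10}). It then uses \cite[Theorem 7.1]{Abouhajar} to place them in $b\Gamma_{E(2;2;1,1)}$ once $|x_3|=1$, and reads off $x_1=\overline{y}_2x_3$, $x_2=\overline{y}_1x_3$ from the description of that distinguished boundary.

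You stay in one variable instead. With $P(z)=x_3z^2-x_2z+x_1$ and $D(z)=y_2z^2-y_1z+1$, the quotient $g=P^{\sim2}/D$ is holomorphic on $\mathbb D$ and bounded by $1$, with $|g(0)|=|x_3|=1$. Hence $g\equiv\overline{x_3}$, and this gives exactly the two coefficient identities. The paper's route is shorter given its black boxes and parallels the $K$ case. Yours is self-contained and avoids the reduction of $\Gamma_{E(3;2;1,2)}$ to tetrablock triples, which is nontrivial and only cited in the paper. In effect you reprove the distinguished-boundary fact in the form you need.

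You can also tighten the write-up.
\begin{enumerate}
\item Step~1, with its placeholder pairings, can be dropped. Both facts you use follow directly from the definition: for $\mu_{E(3;2;1,2)}(A)\le1$ and $|z|,|w|<1$ one has $\det(I-A\operatorname{diag}(z,w,w))=D(w)-zP(w)\neq0$. Taking $z=0$ shows $D$ has no zeros in $\mathbb D$. Then $|P|\le|D|$ on $\mathbb D$, hence on $\overline{\mathbb D}$ by continuity.
\item Zeros of $D$ on $\mathbb T$ need no approximation. Since $g$ is rational and $|g|\le1$ on $\mathbb T$ off finitely many points, $g$ has no poles on $\mathbb T$.
\item The case $y_2=0$ needs no separate treatment. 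The reflection is taken at fixed degree $2$, so $P^{\sim2}(0)=\overline{x_3}$ regardless.
\end{enumerate}
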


We now investigate several properties of
$\Gamma_{E(3;3;1,1,1)}$-inner functions and
$\Gamma_{E(3;2;1,2)}$-inner functions. In particular, we describe the
boundary behavior of their component functions and, in the rational
case, obtain relations between the zeros and poles of the component
functions.

\begin{lem}\label{Lem 1}
	Let
	\(
	\mathbf{x}=(x_1,\ldots,x_7)
	\)
	be a $\Gamma_{E(3;3;1,1,1)}$-inner function. Then the following
	statements hold:
	\begin{enumerate}
		\item For almost every $z\in\mathbb{T}$,
		\(
		x_i(z)
		=
		\overline{x_{7-i}(z)}\,x_7(z), 1\leq i\leq 6,
		\)
		and
		\[
		|x_i(z)|\leq 1,\quad 1\leq i\leq 6,
		\quad
		|x_7(z)|=1.
		\]
		
		\item The function $x_7$ is an inner function.
	\end{enumerate}
\end{lem}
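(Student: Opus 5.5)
The argument works directly from Definition \ref{Defn 1} and the description of $K$. Since $\mathbf{x}$ maps $\mathbb D$ into $\Gamma_{E(3;3;1,1,1)}$, I first need each coordinate function to be bounded, so that Fatou's theorem gives radial limits $x_i(z)$ for almost every $z\in\mathbb T$. For boundedness I would use \cite[Theorem 2.29]{apal1}, as in the proof of Proposition \ref{Prop K}. For every point of $\Gamma_{E(3;3;1,1,1)}$ the triples $(x_i,x_{7-i},x_7)$ lie in $\Gamma_{E(2;2;1,1)}$, and the closed tetrablock is contained in $\overline{\mathbb D}^3$. Hence $|x_i(\lambda)|\le 1$ for all $\lambda\in\mathbb D$ and $1\le i\le 7$. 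In particular each $x_i\in H^\infty$ with $\|x_i\|_\infty\le 1$.

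By the definition of a $\Gamma_{E(3;3;1,1,1)}$-inner function, there is a set $E\subseteq\mathbb T$ of full measure such that for $z\in E$ the radial limit $\mathbf{x}(z)=(x_1(z),\ldots,x_7(z))$ exists and belongs to $K$. Unwinding the definition of $K$ at such $z$ gives $|x_7(z)|=1$ and $x_i(z)=\overline{x_{7-i}(z)}\,x_7(z)$ for $1\le i\le 6$. The bound $|x_i(z)|\le 1$ then has two independent justifications. It follows from $\mathbf{x}(z)\in K\subseteq\Gamma_{E(3;3;1,1,1)}$ together with the containment of $\Gamma_{E(3;3;1,1,1)}$ in $\overline{\mathbb D}^7$ noted above. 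It also follows because radial limits of functions bounded by $1$ are bounded by $1$. This proves (1).

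For (2), the function $x_7$ is holomorphic on $\mathbb D$ with $|x_7|\le 1$, and its radial limits have modulus one almost everywhere. It cannot map into the closed disc with interior values on the boundary unless it is constant, so two cases arise. If $x_7$ is a unimodular constant, it is an (admittedly trivial) inner function, and it is customary to accept constants of modulus one. Otherwise the maximum principle gives $x_7(\mathbb D)\subseteq\mathbb D$, and $x_7$ is inner in the sense recalled in the introduction.

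The only step that is not pure bookkeeping is the boundedness of the coordinates, that is, the inclusion $\Gamma_{E(3;3;1,1,1)}\subseteq\overline{\mathbb D}^7$. I would deduce this from \cite[Theorem 2.29]{apal1} combined with the standard fact that $\Gamma_{E(2;2;1,1)}\subseteq\overline{\mathbb D}^3$, the latter following from \cite{Abouhajar}.
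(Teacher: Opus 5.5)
Your proposal is correct and follows the paper's argument: both unwind the definition of $K$ at the almost every boundary point where the radial limit lies in $K$, and then observe that $x_7$ is a bounded holomorphic function with unimodular boundary values, hence inner. Your two additions are sound refinements of the same route: the inclusion $\Gamma_{E(3;3;1,1,1)}\subseteq\overline{\mathbb D}^7$ via \cite[Theorem 2.29]{apal1}, which the paper uses implicitly when it says the bound $|x_i(z)|\leq 1$ comes from the definition of $K$, and your treatment of the case where $x_7$ is a unimodular constant, which the paper's stated definition of inner (requiring values in $\mathbb D$) would technically exclude.
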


\begin{proof}
	(1) Since $\mathbf{x}$ is a
	$\Gamma_{E(3;3;1,1,1)}$-inner function, by definition,
	\(
	\mathbf{x}(z)\in K
	\)
	for almost every $z\in\mathbb{T}$. By Proposition
	\ref{Prop K}, this is equivalent to
	\[
	\mathbf{x}(z)\in
	\Gamma_{E(3;3;1,1,1)}
	\quad\text{and}\quad
	|x_7(z)|=1
	\]
	for almost every $z\in\mathbb{T}$. Moreover, the  definition of $K$ give
	\(
	x_i(z)
	=
	\overline{x_{7-i}(z)}\,x_7(z), 1\leq i\leq 6,
	\)
	and
	\[
	|x_i(z)|\leq 1,
	\quad 1\leq i\leq 6.
	\]
	
	(2) Since
	\(
	x_7:\mathbb{D}\longrightarrow\overline{\mathbb{D}}
	\)
	is holomorphic and its radial boundary values satisfy
	\(
	|x_7(z)|=1
	\)
	for almost every $z\in\mathbb{T}$, it follows that $x_7$ is an
	inner function.
\end{proof}

As an immediate consequence of Lemma \ref{Lem 1}, we obtain the
following boundary property of the associated functions
$\Psi^{(i)}$.

\begin{cor}\label{Cor 1}
	Let
	\(
	\mathbf{x}=(x_1,\ldots,x_7)
	\)
	be a $\Gamma_{E(3;3;1,1,1)}$-inner function. Then, for almost every
	$z\in\mathbb{T}$,
	\[
	\left\|
	\Psi^{(i)}(\,\cdot\,,\mathbf{x}(z))
	\right\|_{H^\infty(\mathbb{D}^2)}
	=1,
	\quad 1\leq i\leq 3.
	\]
\end{cor}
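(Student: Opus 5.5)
For almost every $z\in\mathbb{T}$, Lemma \ref{Lem 1} puts the point $\mathbf{y}=\mathbf{x}(z)$ in $K$. The plan is to show that for any $\mathbf{y}\in K$ the rational function $\Psi^{(i)}(\cdot,\mathbf{y})$ is unimodular on the torus $\mathbb{T}^2$ and bounded by $1$ on $\mathbb{D}^2$, so that its sup-norm equals $1$. I treat $i=1$; the cases $i=2,3$ follow by permuting the indices.

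\textbf{Step 1 (the symmetry).} Write $P(z_2,z_3)=x_1-z_2x_3-z_3x_5+z_2z_3x_7$ and $Q(z_2,z_3)=1-z_2x_2-z_3x_4+z_2z_3x_6$, with $\mathbf{y}=(x_1,\dots,x_7)$. The relations $x_i=\overline{x_{7-i}}x_7$ defining $K$ give
\[
P(z_2,z_3)=x_7\,z_2z_3\,\overline{Q(1/\overline{z_2},1/\overline{z_3})}.
\]
For instance, the constant term of $P$ is $x_1=\overline{x_6}x_7$, which matches the $z_2z_3$ coefficient of $Q$ after reflection. So $P$ is, up to the unimodular factor $x_7$, the bi-reflection of $Q$. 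On $\mathbb{T}^2$ we have $1/\overline{z_j}=z_j$, hence $|P|=|Q|$ there. Therefore $|\Psi^{(1)}(\cdot,\mathbf{y})|=1$ wherever $Q\neq0$ on $\mathbb{T}^2$.

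\textbf{Step 2 (upper bound).} I need $|\Psi^{(1)}(\cdot,\mathbf{y})|\le1$ on $\mathbb{D}^2$. Since $\mathbf{y}\in\Gamma_{E(3;3;1,1,1)}$, the characterization of $\Gamma_{E(3;3;1,1,1)}$ from \cite{apal1} should give $\|\Psi^{(1)}(\cdot,\mathbf{y})\|_{H^\infty(\mathbb{D}^2)}\le1$, at least where the function is defined; this is the closed analogue of the fact that $\sup|\Psi^{(1)}|<1$ on $G_{E(3;3;1,1,1)}$. If only the open version is available, I approximate: $r\mathbf{y}\in G_{E(3;3;1,1,1)}$ for $0<r<1$, by the usual scaling property of these $\mu$-domains. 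This gives $|\Psi^{(1)}(z_2,z_3,r\mathbf{y})|<1$, and I let $r\to1$ pointwise wherever $Q(z_2,z_3)\neq0$.

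\textbf{Step 3 (main obstacle: zeros of $Q$).} On the boundary $Q$ may vanish on $\overline{\mathbb{D}}^2$, so $\Psi^{(1)}(\cdot,\mathbf{y})$ is a priori only defined off the zero set $Z(Q)$, and $P$ and $Q$ may share factors. I interpret the norm as the supremum over $\mathbb{D}^2\setminus Z(Q)$, or equivalently for the reduced fraction.

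The bound $\le1$ from Step 2 then holds on $\mathbb{D}^2\setminus Z(Q)$. For the equality, $Q\not\equiv0$ because its constant term is $1$, so $Z(Q)\cap\mathbb{T}^2$ is a proper real-algebraic subset of $\mathbb{T}^2$. Pick $(\zeta_2,\zeta_3)\in\mathbb{T}^2$ with $Q(\zeta_2,\zeta_3)\neq0$. By Step 1, $|\Psi^{(1)}(\zeta_2,\zeta_3,\mathbf{y})|=1$. By continuity, $|\Psi^{(1)}(r\zeta_2,r\zeta_3,\mathbf{y})|\to1$ as $r\to1^-$, so the supremum over $\mathbb{D}^2$ is at least $1$.

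Combining the bound $\le1$ with this limit gives norm exactly $1$, for almost every $z\in\mathbb{T}$ and each $i=1,2,3$.
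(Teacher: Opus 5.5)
Your proof is correct and is the argument the paper leaves implicit when it calls this an immediate consequence of Lemma~\ref{Lem 1}. The relations defining $K$ make the numerator of $\Psi^{(i)}$ equal to $x_7$ times the bi-reflection of the denominator, which gives unimodularity on $\mathbb{T}^2$, and membership in $\Gamma_{E(3;3;1,1,1)}$ gives the bound $\le 1$ via \cite[Theorem 2.39]{apal1}. Your Step 3, which picks a point of $\mathbb{T}^2$ off $Z(Q)$, supplies a detail the paper omits. The fallback in Step 2 is not needed; if you keep it, approximate by $\Pi_{E(3;3;1,1,1)}(rA)$ with $\mathbf{y}=\Pi_{E(3;3;1,1,1)}(A)$, since the linear dilation $r\mathbf{y}$ is not an established property.
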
We next consider the additional structure that arises when the
$\Gamma_{E(3;3;1,1,1)}$-inner function is rational.

\begin{lem}\label{Lem 2}
	Let
	\(
	\mathbf{x}=(x_1,\ldots,x_7)
	\)
	be a rational $\Gamma_{E(3;3;1,1,1)}$-inner function. Then
	\[
	x_i(z)
	=
	x_{7-i}^{\vee}(1/z)\,x_7(z), \quad1\leq i\leq 6,
	\]
	for every $z\in\mathbb{C}$ for which both sides are defined.
\end{lem}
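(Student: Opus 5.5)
The plan is to upgrade the almost-everywhere boundary identity of Lemma \ref{Lem 1}(1) to an identity of rational functions on all of $\mathbb C$, using the fact that on $\mathbb T$ complex conjugation can be written as $\overline{w}=1/w$. Rational functions that agree on an infinite set agree wherever both are defined.

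First, fix $1\leq i\leq 6$. Since $\mathbf{x}$ is a rational $\Gamma_{E(3;3;1,1,1)}$-inner function, each coordinate $x_j$ is a rational function with no poles in $\overline{\mathbb D}$, as noted after Definition \ref{Defn 1}. Hence each $x_j$ is continuous on $\overline{\mathbb D}$, and its radial limits coincide with its values on $\mathbb T$. Lemma \ref{Lem 1}(1) then gives
\[
x_i(z)=\overline{x_{7-i}(z)}\,x_7(z)
\]
for almost every $z\in\mathbb T$. Both sides are continuous on $\mathbb T$, so the identity holds for every $z\in\mathbb T$.

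Next, rewrite the conjugate as a rational function. For $z\in\mathbb T$ we have $\overline z=1/z$. By the definition \eqref{f vee}, extended verbatim to rational $f$,
\[
\overline{x_{7-i}(z)}=x_{7-i}^{\vee}(\overline z)=x_{7-i}^{\vee}(1/z).
\]
Here $x_{7-i}^{\vee}$ is again rational: if $x_{7-i}=p/q$, then $x_{7-i}^{\vee}=p^{\vee}/q^{\vee}$, where the polynomials $p^{\vee},q^{\vee}$ are obtained by conjugating coefficients. Consequently $R(z):=x_{7-i}^{\vee}(1/z)\,x_7(z)$ is a rational function of $z$, and $x_i(z)=R(z)$ for all $z\in\mathbb T$.

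Finally, the difference $x_i-R$ is a rational function vanishing on the infinite set $\mathbb T$. Writing it as a quotient of polynomials, the numerator has infinitely many zeros and is therefore identically zero. So $x_i=R$ as rational functions, and hence at every $z\in\mathbb C$ where both sides are defined. No step is a real obstacle; the only care needed is the passage from the a.e.\ radial statement to the pointwise statement on $\mathbb T$, which uses continuity up to the boundary.
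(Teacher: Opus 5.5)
Your proposal is correct and follows essentially the same route as the paper: take the a.e.\ boundary identity from Lemma~\ref{Lem 1}, rewrite $\overline{x_{7-i}(z)}$ on $\mathbb T$ as the rational expression $x_{7-i}^{\vee}(1/z)$, and conclude by the identity principle for rational functions. Your continuity step from a.e.\ to all of $\mathbb T$ is harmless but not needed, since a full-measure subset of $\mathbb T$ is already infinite; and your use of \eqref{f vee} is the consistent one, whereas the paper's proof recalls $f^\vee(z)=\overline{f(1/\overline z)}$, under which $f^\vee(1/z)=\overline{f(\overline z)}$ rather than $\overline{f(z)}$ on $\mathbb T$.
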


\begin{proof}
	Since $\mathbf{x}$ is a
	$\Gamma_{E(3;3;1,1,1)}$-inner function, Lemma \ref{Lem 1} yields
	\(
	x_i(z)
	=
	\overline{x_{7-i}(z)}\,x_7(z),1\leq i\leq 6,
	\)
	for almost every $z\in\mathbb{T}$.
Recall that, for a rational function $f$, its reversed function is
	defined by
	\[
	f^\vee(z)=\overline{f(1/\overline z)}.
	\]
	Hence, for $z\in\mathbb{T}$, we have
	\[
	\overline{f(z)}
	=
	f^\vee(1/z).
	\]
	In particular,
	\[
	\overline{x_{7-i}(z)}
	=
	x_{7-i}^\vee(1/z),
	\quad z\in\mathbb{T},
	\]
	and therefore
	\[
	x_i(z)
	=
	x_{7-i}^\vee(1/z)x_7(z),
	\quad 1\leq i\leq 6,
	\]
	for almost every $z\in\mathbb{T}$.
Now, since each $x_j$ is rational, the functions
	\(
	x_i(z)~\text{and}~
	x_{7-i}^\vee(1/z)x_7(z)
	\)
	are rational functions of $z$. Their difference is therefore a
	rational function which vanishes for almost every $z\in\mathbb{T}$.
	In particular, it has infinitely many zeros with an accumulation
	point in $\mathbb{C}$ away from its poles. Hence, by the identity
	theorem for rational functions, this difference is identically zero.
	Consequently,
	\(
	x_i(z)
	=
	x_{7-i}^\vee(1/z)x_7(z),1\leq i\leq 6.
	\)
	This proves the assertion.
\end{proof}	
For a meromorphic function $f$ on $\mathbb D$ and a point
$a\in\mathbb D$, we denote by $\operatorname{ord}_{a}(f)$ the order of
$f$ at $a$. More precisely, if $f$ has a zero of order $m$ at $a$, then
\(
\operatorname{ord}_{a}(f)=m,
\)
whereas if $f$ has a pole of order $m$ at $a$, then
\(
\operatorname{ord}_{a}(f)=-m.
\)
If $f$ is holomorphic and nonzero at $a$, we set
\(
\operatorname{ord}_{a}(f)=0.
\)

	\begin{prop}\label{Prop 1}
	Let
	\(
	\mathbf{x}=(x_1,\ldots,x_7)
	\)
	be a rational $\Gamma_{E(3;3;1,1,1)}$-inner function. Then the
	following statements hold.
	\begin{enumerate}
		\item 
		\begin{enumerate}
		\item Let $a_j\in\mathbb{C}$ be a pole of $x_7$ of multiplicity
		$k_j\geq 1$, and suppose that
		$\frac{1}{\overline{a_j}}$ is a zero of $x_{7-j}$ of multiplicity
		$l_j\geq 0$. Then $a_j$ is a pole of $x_j$ of multiplicity at
		least
		\(
		k_j-l_j,
 1\leq j\leq 3,
		\)
		whenever $k_j-l_j>0$.
		\item If $\infty$ is a pole of $x_7$ of multiplicity $k\geq 1$ and
		$0$ is a zero of $x_{7-j}$ of multiplicity $l_j\geq 0$, then
		$\infty$ is a pole of $x_j$ of multiplicity at least
		\(
		k-l_j, 1\leq j\leq 3,
		\)
		whenever $k-l_j>0$.
		\end{enumerate}
		
		\item \begin{enumerate}
		\item If $a_j\in\mathbb{C}$ is a pole of $x_j$ of multiplicity
		$k_j\geq 1$, then $a_j$ is a pole of $x_7$ of multiplicity at
		least $k_j$, for $1\leq j\leq 3$.
	\item If $\infty$ is a pole of $x_j$ of multiplicity $k_j$, then
		$\infty$ is a pole of $x_7$ of multiplicity at least
		\(
		\max\{k_1,k_2,k_3\}.
		\)
	\end{enumerate}
	\end{enumerate}
\end{prop}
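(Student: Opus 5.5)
The plan is to work entirely from the functional identity of Lemma \ref{Lem 2}, which for $1\le j\le 3$ gives
\[
x_j(z)=x_{7-j}^{\vee}(1/z)\,x_7(z),\qquad x_{7-j}(z)=x_j^{\vee}(1/z)\,x_7(z),
\]
valid on $\mathbb C$ off the finitely many poles, and to compare orders of vanishing on both sides. The basic observation is
\[
\operatorname{ord}_a\bigl(x_{7-j}^{\vee}(1/\cdot)\bigr)=\operatorname{ord}_{1/\overline a}(x_{7-j}),
\]
since $x_{7-j}^{\vee}(1/z)=\overline{x_{7-j}(1/\overline z)}$ and $z\mapsto 1/\overline z$ is an anti-conformal bijection preserving multiplicities. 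Similarly, the order at $\infty$ of $x_{7-j}^{\vee}(1/\cdot)$ equals the order of $x_{7-j}$ at $0$. Orders add under products of rational functions, so
\[
\operatorname{ord}_a(x_j)=\operatorname{ord}_{1/\overline a}(x_{7-j})+\operatorname{ord}_a(x_7).
\]

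For (1)(a), let $a_j$ be a pole of $x_7$ of order $k_j$, so $\operatorname{ord}_{a_j}(x_7)=-k_j$. If $1/\overline{a_j}$ is a zero of $x_{7-j}$ of order $l_j$, the formula gives $\operatorname{ord}_{a_j}(x_j)=l_j-k_j$. Hence $a_j$ is a pole of $x_j$ of order $k_j-l_j$ whenever this is positive, and in particular of order at least $k_j-l_j$. The case $a_j=0$ is degenerate: $1/\overline{a_j}=\infty$, and there one uses the order of $x_{7-j}$ at $\infty$. This cannot occur anyway, because $x_7$ is holomorphic on $\mathbb D$. Part (1)(b) is the same computation at $\infty$, using $\operatorname{ord}_\infty(x_{7-j}^{\vee}(1/\cdot))=\operatorname{ord}_0(x_{7-j})=l_j$.

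For (2)(a), I expect to use the bound $|x_j|\le 1$ on $\mathbb D$ and its reflection. A pole $a_j$ of $x_j$ cannot lie in $\overline{\mathbb D}$, since $x_j$ is bounded on $\mathbb D$ and extends continuously to $\overline{\mathbb D}$. So $|a_j|>1$ and $1/\overline{a_j}\in\mathbb D$, where $x_{7-j}$ is holomorphic, giving $\operatorname{ord}_{1/\overline{a_j}}(x_{7-j})\ge 0$. The formula then yields
\[
\operatorname{ord}_{a_j}(x_7)=\operatorname{ord}_{a_j}(x_j)-\operatorname{ord}_{1/\overline{a_j}}(x_{7-j})\le -k_j,
\]
so $x_7$ has a pole at $a_j$ of order at least $k_j$. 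For (2)(b), apply the same argument at $\infty$: $\operatorname{ord}_0(x_{7-j})\ge 0$ because $0\in\mathbb D$, so the order of the pole of $x_7$ at $\infty$ is at least $k_j$ for each $j$ with a pole at $\infty$. Taking the maximum over $j$ gives the bound $\max\{k_1,k_2,k_3\}$, with $k_j=0$ understood when $x_j$ has no pole at $\infty$.

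The only delicate point I anticipate is bookkeeping at $\infty$ and at $0$ under the reflection $z\mapsto1/\overline z$, together with justifying that the poles lie outside $\overline{\mathbb D}$. The latter rests on the remark after Definition \ref{Defn 1} that rational inner coordinates have no poles in $\overline{\mathbb D}$.
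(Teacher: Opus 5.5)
Your proposal is correct and follows essentially the same route as the paper: both compare orders on the two sides of the identity $x_j(z)=x_{7-j}^{\vee}(1/z)\,x_7(z)$ from Lemma~\ref{Lem 2}, and for part (2) both use that $x_{7-j}$ is holomorphic at $1/\overline{a_j}\in\mathbb D$ (resp.\ at $0$). Your additivity-of-$\operatorname{ord}$ formulation is a tidier version of the paper's explicit local factorizations, and your appeal to boundedness of $x_j$ on $\mathbb D$ correctly rules out poles on $\mathbb T$, a point the paper glosses over by invoking only holomorphy on $\mathbb D$.
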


\begin{proof}
	By Lemma~\ref{Lem 2}, for $1\leq i\leq 6$ we have the identity of
	rational functions
	\begin{equation}\label{P 3}
		x_i(z)=x_{7-i}^{\vee}(1/z)x_7(z).
	\end{equation}
We first prove (1)(a). Since $x_7$ is a rational inner function on
	$\mathbb{D}$, it has no poles in $\overline{\mathbb{D}}$. Hence
	every finite pole $a_j$ of $x_7$ satisfies
	\(
	|a_j|>1.
	\)
	Consequently,
	\(
	\left|\frac{1}{a_j}\right|<1.
	\)
Fix $1\leq j\leq 3$. Suppose that $a_j$ is a pole of $x_7$ of
	multiplicity $k_j$, and that
	$\frac{1}{\overline{a_j}}$ is a zero of $x_{7-j}$ of multiplicity
	$l_j$.
Recall that
	\[
	x_{7-j}^{\vee}(w)
	=
	\overline{x_{7-j}(1/w)}.
	\]
	Thus the zero of $x_{7-j}$ at
	$\frac{1}{\overline{a_j}}$ of multiplicity $l_j$ corresponds to a
	zero of $x_{7-j}^{\vee}$ at $a_j$ of multiplicity $l_j$. Hence
	there exists a holomorphic function $g_j$ in a neighbourhood of
	$a_j$, with
	\(
	g_j(a_j)\neq0,
	\)
	such that
	\[
	x_{7-j}^{\vee}(z)
	=
	(z-a_j)^{l_j}g_j(z).
	\]
	On the other hand, since $a_j$ is a pole of $x_7$ of multiplicity
	$k_j$, there exists a holomorphic function $h_j$ near $a_j$,
	with $h_j(a_j)\neq0$, such that
	\[
	x_7(z)
	=
	(z-a_j)^{-k_j}h_j(z).
	\]
	Substituting these expressions into \eqref{P 3}, we obtain
	\[
	x_j(z)
	=
	(z-a_j)^{l_j-k_j}g_j(z)h_j(z).
	\]
	Therefore, if $k_j>l_j$, then $x_j$ has a pole at $a_j$ of
	multiplicity exactly $k_j-l_j$, and in particular of multiplicity
	at least $k_j-l_j$.

	We next consider the case $(1)(b).$  Suppose that $\infty$ is a pole of $x_7$.
	Let
	\(
	w=\frac{1}{z}.
	\)
	Then $\infty$ is a pole of $x_7$ of multiplicity $k$ if and only if
	$0$ is a pole of $x_7(1/w)$ of multiplicity $k$. From
	\eqref{P 3}, with $z$ replaced by $1/w$, we obtain
	\begin{equation}\label{P 4}
		x_j(1/w)
	=
	x_{7-j}^{\vee}(w)x_7(1/w).
	\end{equation}
	
	Since $0$ is a zero of $x_{7-j}$ of multiplicity $l_j$, the
	definition of the reversed function implies that
	$x_{7-j}^{\vee}$ has a zero at $\infty$ of multiplicity $l_j$,
	or equivalently, in the local coordinate $w=1/z$, the factor
	$x_{7-j}^{\vee}(w)$ has a zero at $w=0$ of multiplicity $l_j$.
	Thus
	\[
	x_{7-j}^{\vee}(w)=w^{l_j}g_j(w),
	\qquad g_j(0)\neq0,
	\]
	while
	\[
	x_7(1/w)=w^{-k}h(w),
	\qquad h(0)\neq0.
	\]
	Consequently, \eqref{P 4} gives
	\[
	x_j(1/w)
	=
	w^{l_j-k}g_j(w)h(w).
	\]
	Therefore, if $k>l_j$, then $x_j(1/w)$ has a pole at $w=0$ of
	multiplicity $k-l_j$. Equivalently, $x_j$ has a pole at $\infty$
	of multiplicity at least $k-l_j$. This proves (1).

	We now prove (2)(a). Let $a_j\in\mathbb{C}$ be a pole of $x_j$ of
	multiplicity $k_j$. Since $x_j$ is holomorphic on $\mathbb{D}$,
	we necessarily have
	\(
	|a_j|>1.
	\)
	Hence
	\(
	\left|\frac{1}{a_j}\right|<1.
	\)
	In particular, $x_{7-j}^{\vee}(1/z)$ is holomorphic and finite in a
	neighbourhood of $a_j$. More precisely, there exists a holomorphic
	function $g_j$ near $a_j$ such that
	\[
	x_{7-j}^{\vee}(1/z)=g_j(z).
	\]
	By \eqref{P 3},
	\[
	x_j(z)=g_j(z)x_7(z).
	\]
	Since $x_j$ has a pole at $a_j$ of multiplicity $k_j$ while
	$g_j$ is holomorphic at $a_j$, the function $x_7$ must have a pole
	at $a_j$ of multiplicity at least $k_j$. Thus
	\[
	\operatorname{ord}_{a_j}(x_7)\leq-k_j,
	\]
	which proves the first assertion in (2)(a).

	Finally we prove $(2)(b).$  Suppose that $\infty$ is a pole of $x_j$ of multiplicity
	$k_j$. Again put $w=1/z$. Then
	$x_j(1/w)$ has a pole at $w=0$ of multiplicity $k_j$. By
	\eqref{P 4},
	\(
	x_j(1/w)
	=
	x_{7-j}^{\vee}(w)x_7(1/w).
	\)
	Since $x_{7-j}$ is holomorphic at $0$, the function
	$x_{7-j}^{\vee}(w)$ is holomorphic at $w=0$. Hence the pole of
	$x_j(1/w)$ at $w=0$ must come entirely from $x_7(1/w)$.
	Therefore $x_7(1/w)$ has a pole at $w=0$ of multiplicity at least
	$k_j$. Equivalently, $x_7$ has a pole at $\infty$ of multiplicity
	at least $k_j$.
This holds for each $j=1,2,3$. Consequently, the multiplicity of
	the pole of $x_7$ at $\infty$ is at least
	\(
	\max\{k_1,k_2,k_3\}.
	\)
	This completes the proof.
\end{proof}

In the remainder of this section, we establish analogous results
for $\Gamma_{E(3;2;1,2)}$-inner functions. The proofs follow the same
arguments as those of Lemma~\ref{Lem 1}, Corollary~\ref{Cor 1},
Lemma~\ref{Lem 2}, and Proposition~\ref{Prop 1}, respectively, and
are therefore omitted.

\begin{lem}\label{Lem 3}
	Let
	\(
	\widetilde{\mathbf{x}}
	=(x_1,x_2,x_3,y_1,y_2)
	\)
	be a $\Gamma_{E(3;2;1,2)}$-inner function. Then the following
	statements hold:
	\begin{enumerate}
		\item For almost every $z\in\mathbb{T}$,
		\[
		x_1(z)=\overline{y_2(z)}x_3(z),
		\quad
		x_2(z)=\overline{y_1(z)}x_3(z),
		\quad
		|x_3(z)|=1.
		\]
		
		\item $x_3$ is an inner function.
	\end{enumerate}
\end{lem}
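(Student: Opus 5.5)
The plan is to copy the proof of Lemma~\ref{Lem 1}, with Proposition~\ref{Prop K_1} taking the place of Proposition~\ref{Prop K}.

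For (1), let $\widetilde{\mathbf{x}}$ be a $\Gamma_{E(3;2;1,2)}$-inner function. Each coordinate of $\widetilde{\mathbf{x}}$ is a bounded holomorphic function on $\mathbb{D}$, since $\Gamma_{E(3;2;1,2)}$ is a compact subset of $\mathbb{C}^5$. By Fatou's theorem, the radial limit $\widetilde{\mathbf{x}}(z)=\lim_{r\to1^-}\widetilde{\mathbf{x}}(rz)$ therefore exists for almost every $z\in\mathbb{T}$. Definition~\ref{Defn 1}(2) says that this limit lies in $K_1$ for almost every $z\in\mathbb{T}$. Discarding a null set, fix such a $z$. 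The defining conditions of $K_1$ then give directly
\[
|x_3(z)|=1,\qquad x_1(z)=\overline{y_2(z)}\,x_3(z),\qquad x_2(z)=\overline{y_1(z)}\,x_3(z).
\]
Proposition~\ref{Prop K_1} is not strictly needed for these relations, since they are part of the definition of $K_1$. It does, however, confirm that the limit point lies in $\Gamma_{E(3;2;1,2)}$, which is the form in which the boundary condition is used later.

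For (2), I first show that $x_3$ maps $\mathbb{D}$ into $\overline{\mathbb{D}}$. For every $\lambda\in\mathbb{D}$ we have $\widetilde{\mathbf{x}}(\lambda)\in\Gamma_{E(3;2;1,2)}$, so $x_3(\lambda)=\det A$ for some $A\in M_3(\mathbb{C})$ with $\mu_{E(3;2;1,2)}(A)\le 1$.

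The key point is to show $|\det A|\le 1$ for such $A$. I would use the fact that scalar matrices $zI_3$ belong to $E(3;2;1,2)$ (take $z_1=z_2=z$). Hence $\mu_{E(3;2;1,2)}(A)\ge r(A)$, because $\det(I-AzI)=0$ exactly when $1/z$ is an eigenvalue of $A$. It follows that $r(A)\le 1$, and so $|\det A|\le r(A)^3\le 1$.

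Alternatively, one can appeal to the characterization in \cite[Theorem 3.5]{Bharali}, which gives $|x_3|\le1$ on $\Gamma_{E(3;2;1,2)}$ directly.

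Combining the two parts: $x_3$ is a holomorphic map $\mathbb{D}\to\overline{\mathbb{D}}$, and by (1) its radial limits have modulus one almost everywhere on $\mathbb{T}$. Hence $x_3$ is inner. Strictly, one should also note that $x_3$ maps into $\mathbb{D}$ rather than only $\overline{\mathbb{D}}$. By the maximum principle, $x_3$ is either a unimodular constant, which is still inner in the usual convention, or it maps into $\mathbb{D}$.

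The only non-formal step is the bound $|x_3|\le 1$ on $\Gamma_{E(3;2;1,2)}$. The spectral-radius argument above settles it.
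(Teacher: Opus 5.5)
Your proposal is correct and follows the paper's route, which omits this proof and points to Lemma~\ref{Lem 1}. Part (1) is read off from the definition of $K_1$, and part (2) follows because $x_3$ is a holomorphic map $\mathbb{D}\to\overline{\mathbb{D}}$ with unimodular radial limits almost everywhere. The one thing you add is an explicit, correct justification of $|x_3|\le 1$ on $\Gamma_{E(3;2;1,2)}$ via $\mu_{E(3;2;1,2)}(A)\ge r(A)$, using that scalar matrices lie in $E(3;2;1,2)$; the paper takes this bound for granted.
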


The following consequence of Lemma~\ref{Lem 3} will be useful.

\begin{cor}\label{Cor 2}
	Let
	\(
	\widetilde{\mathbf{x}}
	=(x_1,x_2,x_3,y_1,y_2)
	\)
	be a $\Gamma_{E(3;2;1,2)}$-inner function. Then, for almost every
	$z\in\mathbb{T}$,
	\[
	\left\|
	\Psi_3(\,\cdot\,,\widetilde{\mathbf{x}}(z))
	\right\|_{H^\infty(\mathbb{D})}
	=1.
	\]
\end{cor}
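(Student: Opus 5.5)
The plan is to combine the boundary description of Lemma~\ref{Lem 3} with the structure of $\Psi_3$ at a point of $K_1$. Fix $z\in\mathbb T$ outside the null set on which the radial limit $\widetilde{\mathbf{x}}(z)$ fails to exist or fails to lie in $K_1$. Write $\widetilde{\mathbf{x}}(z)=(x_1,x_2,x_3,y_1,y_2)$, suppressing $z$, and set
\[
D(\lambda)=y_2\lambda^2-y_1\lambda+1,\qquad N(\lambda)=x_3\lambda^2-x_2\lambda+x_1 .
\]
By Lemma~\ref{Lem 3} we have $x_1=\overline{y_2}x_3$, $x_2=\overline{y_1}x_3$ and $|x_3|=1$. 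Hence
\[
N(\lambda)=x_3\bigl(\lambda^2-\overline{y_1}\lambda+\overline{y_2}\bigr).
\]
For $|\lambda|=1$ one checks that
\[
\lambda^2\,\overline{D(\lambda)}=\lambda^2-\overline{y_1}\lambda+\overline{y_2},
\]
so that $N(\lambda)=x_3\lambda^2\overline{D(\lambda)}$ on $\mathbb T$. In other words, $N=x_3D^{\sim 2}$ in the notation \eqref{f sim}. Consequently $|N(\lambda)|=|D(\lambda)|$ for every $\lambda\in\mathbb T$.

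The next step, which I expect to be the main obstacle, is that $\widetilde{\mathbf{x}}(z)$ lies only in the closed set $\Gamma_{E(3;2;1,2)}$. The nonvanishing of $D$ on $\overline{\mathbb D}$ and the bound $|\Psi_3|\le 1$ are only available on the open domain $G_{E(3;2;1,2)}$, via \cite[Theorem 3.5]{Bharali}. To handle this, I would approximate: choose $\widetilde{\mathbf{x}}_n\in G_{E(3;2;1,2)}$ with $\widetilde{\mathbf{x}}_n\to\widetilde{\mathbf{x}}(z)$, which is possible since $\Gamma_{E(3;2;1,2)}$ is the closure of $G_{E(3;2;1,2)}$. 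The corresponding denominators $D_n$ have no zeros in $\mathbb D$ and converge locally uniformly to $D$. Since $D(0)=1$, Hurwitz's theorem shows that $D$ has no zeros in $\mathbb D$. Then $\Psi_3(\lambda,\widetilde{\mathbf{x}}_n)\to\Psi_3(\lambda,\widetilde{\mathbf{x}}(z))$ pointwise on $\mathbb D$. From $|\Psi_3(\cdot,\widetilde{\mathbf{x}}_n)|\le1$ on $\mathbb D$ we obtain that $\Psi_3(\cdot,\widetilde{\mathbf{x}}(z))$ is holomorphic on $\mathbb D$ with $H^\infty(\mathbb D)$-norm at most $1$.

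For the reverse inequality, note that $D$ is a nonzero polynomial, so it has at most two zeros on $\mathbb T$. At every other point $\lambda\in\mathbb T$, the rational function $\Psi_3(\cdot,\widetilde{\mathbf{x}}(z))=N/D$ is continuous up to $\lambda$ and has modulus $|N(\lambda)|/|D(\lambda)|=1$ there. Its boundary values therefore have modulus one on $\mathbb T$ away from a finite set. This forces the supremum over $\mathbb D$ to be at least $1$; for instance, let $\mu\to\lambda$ radially at such a point $\lambda$. Together with the upper bound, the norm equals $1$ for almost every $z\in\mathbb T$.

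If $D$ does vanish at some point of $\mathbb T$, then $N=x_3D^{\sim2}$ vanishes there too, so common factors cancel. This causes no trouble, because only points of $\mathbb T$ where $D\neq0$ were used above. The argument is the same as the one behind Corollary~\ref{Cor 1}, with $\Gamma_{E(2;2;1,1)}$ replaced by the one-variable function $\Psi_3$.
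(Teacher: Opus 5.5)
Your proof is correct and follows the route the paper intends; the paper omits the argument, treating it like Corollary~\ref{Cor 1} as an immediate consequence of Lemma~\ref{Lem 3}: at a point of $K_1$ the boundary relations give $\Psi_3(\cdot,\widetilde{\mathbf{x}}(z))=x_3D^{\sim 2}/D$, which is unimodular on $\mathbb T$ off the zeros of $D$, and membership in $\Gamma_{E(3;2;1,2)}$ supplies the bound $\le 1$. The only difference is that you derive that closed-set bound yourself, by approximating from $G_{E(3;2;1,2)}$ and applying Hurwitz's theorem, whereas the paper takes the characterization of $\Gamma_{E(3;2;1,2)}$ for granted (as it does again in Section~\ref{Description}), so your version is simply more self-contained.
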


\begin{lem}\label{Lem 4}
	Let
	\(
	\widetilde{\mathbf{x}}
	=(x_1,x_2,x_3,y_1,y_2)
	\)
	be a rational $\Gamma_{E(3;2;1,2)}$-inner function. Then the
	following identities hold as identities of rational functions:
	\[
		x_1(z)=y_2^\vee(1/z)x_3(z),
		\quad
		x_2(z)=y_1^\vee(1/z)x_3(z).
	\]
\end{lem}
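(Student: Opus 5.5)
The argument follows the proof of Lemma~\ref{Lem 2}, with Lemma~\ref{Lem 3} supplying the boundary relations. Since $\widetilde{\mathbf{x}}$ is a $\Gamma_{E(3;2;1,2)}$-inner function, Lemma~\ref{Lem 3}(1) gives, for almost every $z\in\mathbb{T}$,
\[
x_1(z)=\overline{y_2(z)}\,x_3(z),\qquad x_2(z)=\overline{y_1(z)}\,x_3(z).
\]
These are radial limits. Because $\widetilde{\mathbf{x}}$ is rational, its coordinates have no poles in $\overline{\mathbb D}$, as noted after Definition~\ref{Defn 1}. So they extend continuously to $\mathbb{T}$ and the radial limits are just the values there.

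Next, rewrite the conjugates. With $f^\vee(z)=\overline{f(\overline z)}$, a point $z\in\mathbb T$ satisfies $\overline z=1/z$, hence
\[
\overline{f(z)}=f^\vee(\overline z)=f^\vee(1/z).
\]
This holds for any rational $f$ that is finite at $z$. Applying it to $y_2$ and $y_1$ converts the boundary relations into
\[
x_1(z)=y_2^\vee(1/z)\,x_3(z),\qquad x_2(z)=y_1^\vee(1/z)\,x_3(z)
\]
for almost every $z\in\mathbb T$. Here $y_k^\vee(1/z)=\overline{y_k(1/\overline z)}$ is again a rational function of $z$, since conjugating the coefficients of a rational function and substituting $1/z$ preserves rationality.

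Finally, consider the rational functions
\[
R_1(z)=x_1(z)-y_2^\vee(1/z)x_3(z),\qquad R_2(z)=x_2(z)-y_1^\vee(1/z)x_3(z).
\]
Each vanishes on a subset of $\mathbb T$ of full measure, which is an infinite set. A rational function has only finitely many poles, so it is a nonzero rational function only if it has finitely many zeros. Hence $R_1\equiv R_2\equiv0$, which are exactly the claimed identities of rational functions, valid wherever both sides are defined.

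The only delicate step is the bookkeeping of the reflection $f^\vee(1/z)$ versus $\overline{f(z)}$ on $\mathbb T$. No step is a genuine obstacle; it is a direct transcription of the proof of Lemma~\ref{Lem 2}.
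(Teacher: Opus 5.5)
Your proof is correct and is the argument the paper intends: the paper omits the proof of Lemma~\ref{Lem 4} as a transcription of the proof of Lemma~\ref{Lem 2}. That means taking the boundary relations from Lemma~\ref{Lem 3}, rewriting $\overline{f(z)}=f^\vee(1/z)$ on $\mathbb T$, and applying the identity theorem for rational functions. You also correctly use the introduction's convention $f^\vee(z)=\overline{f(\overline z)}$, which is the one that makes this rewriting valid; the proof of Lemma~\ref{Lem 2} misstates it as $\overline{f(1/\overline z)}$.
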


\begin{prop}\label{Prop 2}
	Let
	\(
	\widetilde{\mathbf{x}}
	=(x_1,x_2,x_3,y_1,y_2)
	\)
	be a rational $\Gamma_{E(3;2;1,2)}$-inner function. Then the
	following statements hold:
	\begin{enumerate}
		\item \begin{enumerate}
		\item Suppose that $a_1,a_2\in\mathbb{C}$ are poles of $x_3$ of
		multiplicities $k_1,k_2\geq1$, respectively, and that
		\(
		\frac{1}{\overline{a_1}}~\text{and}~
		\frac{1}{\overline{a_2}}
		\)
		are zeros of $y_2$ and $y_1$ of multiplicities $l_1$ and $l_2$,
		respectively. Then $a_1$ and $a_2$ are poles of $x_1$ and $x_2$,
		respectively, of multiplicities at least
		\(
		k_1-l_1
		~\text{and}~
		k_2-l_2,
		\)
		whenever these quantities are positive.
		
		\item If $\infty$ is a pole of $x_3$ of multiplicity $k\geq1$ and
		$0$ is a zero of $y_2$ and $y_1$ of multiplicities $l_1$ and
		$l_2$, respectively, then $\infty$ is a pole of $x_1$ and
		$x_2$ of multiplicities at least
		\(
		k-l_1
		~\text{and}~
		k-l_2,
		\)
		respectively, whenever these quantities are positive.
		\end{enumerate}
		
		\item \begin{enumerate}
		\item If $a_1,a_2\in\mathbb{C}$ are poles of $x_1$ and $x_2$
		of multiplicities $k_1,k_2\geq1$, respectively, then $a_1$ and
		$a_2$ are poles of $x_3$ of multiplicities at least $k_1$ and
		$k_2$, respectively.
		
		\item If $\infty$ is a pole of $x_1$ and $x_2$ of multiplicities
		$k_1$ and $k_2$, respectively, then $\infty$ is a pole of
		$x_3$ of multiplicity at least
		\(
		\max\{k_1,k_2\}.
		\)\end{enumerate}
	\end{enumerate}
\end{prop}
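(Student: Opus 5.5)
The plan is to copy the proof of Proposition~\ref{Prop 1}, with Lemma~\ref{Lem 4} playing the role of Lemma~\ref{Lem 2}. By Lemma~\ref{Lem 4} we have the two identities of rational functions
\[
x_1(z)=y_2^\vee(1/z)\,x_3(z),\qquad x_2(z)=y_1^\vee(1/z)\,x_3(z).
\]
From now on everything is a local Laurent-order computation at the point in question. We will use the additive rule $\operatorname{ord}_a(fg)=\operatorname{ord}_a(f)+\operatorname{ord}_a(g)$ for rational functions, extended to $a=\infty$ through the coordinate $w=1/z$. We also use two preliminary facts. First, $x_3$ is a rational inner function by Lemma~\ref{Lem 3}, and every coordinate is holomorphic across $\overline{\mathbb D}$, so all finite poles of $x_1,x_2,x_3$ satisfy $|a|>1$. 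Second, a zero of $y_k$ of order $l$ at a point $b$ is a zero of order $l$ of $z\mapsto y_k^\vee(1/z)=\overline{y_k(1/\overline{z}\,)}$ at $1/\overline b$. Indeed, conjugation together with the anti-holomorphic inversion preserves multiplicities: locally $\overline{y_k(1/\overline z)}=\overline{(1/\overline z-b)}^{\,l}\,\overline{g(1/\overline z)}=(1/z-\overline b)^l\,\tilde g(z)$.

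For (1)(a), fix $a_1$ and write $x_3(z)=(z-a_1)^{-k_1}h(z)$ with $h(a_1)\ne0$. The zero of $y_2$ of order $l_1$ at $1/\overline{a_1}$ gives $y_2^\vee(1/z)=(z-a_1)^{l_1}g(z)$ with $g(a_1)\neq 0$. Multiplying these yields $\operatorname{ord}_{a_1}(x_1)=l_1-k_1$, so $x_1$ has a pole of order exactly $k_1-l_1$ at $a_1$ when this number is positive. The same argument with $(a_2,y_1,x_2)$ handles $x_2$.

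For (1)(b), substitute $z=1/w$. This gives $x_1(1/w)=y_2^\vee(w)\,x_3(1/w)$, where $y_2^\vee(w)=\overline{y_2(\overline w)}$ vanishes to order $l_1$ at $w=0$ and $x_3(1/w)$ has a pole of order $k$ there. The orders again add, and the same computation with $y_1$ gives the statement for $x_2$.

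For (2)(a), note that $|a_1|>1$, so $1/\overline{a_1}\in\mathbb D$. Hence $y_2$ is holomorphic at $1/\overline{a_1}$, and therefore $y_2^\vee(1/z)$ is holomorphic at $a_1$. The pole of $x_1$ at $a_1$ must then come from $x_3$, which gives $\operatorname{ord}_{a_1}(x_3)\le -k_1$. The same reasoning with $y_1$ handles $a_2$.

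For (2)(b), use the $w$-form of the identities. Since $y_1$ and $y_2$ are holomorphic at $0$, the functions $y_k^\vee(w)$ are holomorphic at $w=0$. Therefore $x_3(1/w)$ has a pole of order at least $k_1$ and at least $k_2$ at $w=0$, that is, of order at least $\max\{k_1,k_2\}$.

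The only point needing care, and the main obstacle, is the correspondence of multiplicities under $f\mapsto f^\vee(1/z)$. One must check that a zero at $b$ really becomes a zero of the same order at $1/\overline b$, and similarly at the pair $0$ and $\infty$. This comes down to the local expansion above, where we need $\tilde g(z)=\overline{g(1/\overline z)}$ to be holomorphic and nonvanishing near $1/\overline b$. Once that is in place, every remaining step is the additivity of orders applied to the identities of Lemma~\ref{Lem 4}.
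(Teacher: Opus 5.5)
Your proposal is correct and follows the paper's intended route: the paper omits this proof, saying it is the argument of Proposition~\ref{Prop 1} with Lemma~\ref{Lem 4} in place of Lemma~\ref{Lem 2}, and that is exactly what you carry out using additivity of orders at the finite points $a_j$ and at $\infty$ via $w=1/z$. Your explicit check that $z\mapsto\overline{y_k(1/\overline z)}$ sends a zero of order $l$ at $b$ to a zero of order $l$ at $1/\overline b$ is cleaner than the paper's treatment of $f^\vee$ in the proof of Proposition~\ref{Prop 1}, where the definition is used inconsistently.
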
	
The relations established in Propositions~\ref{Prop 1} and
\ref{Prop 2} yield several further consequences concerning the
reciprocal structure of rational inner functions.
For $a\in\mathbb{C}\setminus\{0\}$, define
\[
a^*:=\frac{1}{\overline{a}}.
\]
Notice that the map $a\mapsto a^*$ is an involution, that is,
\(
(a^*)^*=a.
\)

\begin{cor}\label{cor:reciprocal-pairing}
	Let
	\(
	\mathbf{x}=(x_1,\ldots,x_7)
	\)
	be a rational $\Gamma_{E(3;3;1,1,1)}$-inner function. Suppose that
	$a\in\mathbb{C}\setminus\{0\}$ is neither a zero nor a pole of
	$x_7$. Then, for $1\leq i\leq3$,
	\[
	\operatorname{ord}_{a}(x_i)
	=
	\operatorname{ord}_{a^*}(x_{7-i}).
	\]
	Consequently, zeros and poles of $x_i$ and $x_{7-i}$ occur in
	reciprocal-conjugate pairs and have the same multiplicities.
\end{cor}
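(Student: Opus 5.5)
Take the identity of rational functions from Lemma~\ref{Lem 2},
\[
x_i(z)=x_{7-i}^{\vee}(1/z)\,x_7(z),\qquad 1\leq i\leq 6,
\]
and compare orders at the point $a$. Orders are additive under products of meromorphic functions, and $\operatorname{ord}_a(x_7)=0$ by hypothesis. So for $1\leq i\leq 3$ it suffices to show
\[
\operatorname{ord}_a\bigl(x_{7-i}^{\vee}(1/z)\bigr)=\operatorname{ord}_{a^*}(x_{7-i}).
\]
Note that $a\neq0$ makes $1/z$ a local biholomorphism near $a$, so no point at infinity intervenes.

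For the order computation, set $f=x_{7-i}$ and write $x_{7-i}^{\vee}(1/z)=\overline{f(1/\overline{1/z})}=\overline{f(\overline{z}^{-1}\cdot\overline{\cdot})}$. Concretely, $x_{7-i}^{\vee}(1/z)=\overline{f(1/\overline z)}$, using the convention $f^\vee(w)=\overline{f(1/\overline w)}$ from the proof of Lemma~\ref{Lem 2}. If the definition \eqref{f vee}, $f^\vee(w)=\overline{f(\overline w)}$, is used instead, one gets $\overline{f(\overline{1/z})}=\overline{f(1/\overline z)}$ as well, so both conventions agree here. Near $a$, write $f(w)=(w-a^*)^{m}g(w)$ with $g$ holomorphic and nonvanishing near $a^*$, where $m=\operatorname{ord}_{a^*}(f)\in\mathbb Z$. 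Substituting $w=1/\overline z$ and conjugating gives
\[
\overline{f(1/\overline z)}=\Bigl(\tfrac{1}{z}-\tfrac1{a}\Bigr)^{m}\,\overline{g(1/\overline z)}=\Bigl(\tfrac{a-z}{az}\Bigr)^{m}\tilde g(z).
\]
Here $\tilde g(z)=\overline{g(1/\overline z)}$ is holomorphic and nonvanishing near $a$, since $z\mapsto\overline{g(1/\overline z)}$ is the composition of an antiholomorphic map with an antiholomorphic conjugation. Because $az\neq0$ near $a$, the order at $a$ is exactly $m$, which is the required equality.

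Combining, $\operatorname{ord}_a(x_i)=m+0=\operatorname{ord}_{a^*}(x_{7-i})$ for $1\le i\le 3$. For the "consequently" clause: $a$ is a zero (resp.\ pole) of $x_i$ of multiplicity $m$ exactly when $a^*$ is a zero (resp.\ pole) of $x_{7-i}$ of multiplicity $m$. The converse direction, starting from a zero or pole of $x_{7-i}$, follows by applying the same argument with $a$ replaced by $a^*$ and using the involution $(a^*)^*=a$. This step requires $a^*$ also to avoid the zeros and poles of $x_7$. Alternatively, it is immediate since the displayed equality is symmetric once established at $a$.

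The only delicate step is verifying that the conjugate–inversion $z\mapsto\overline{f(1/\overline z)}$ preserves orders, which is the local computation above. One should also check that the hypothesis excludes $a=0$ and the poles and zeros of $x_7$, precisely so that no extra order contribution from $x_7$ or from $z=0,\infty$ appears.
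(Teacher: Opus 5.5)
Your proposal is correct and follows the paper's argument: take orders at $a$ in the identity of Lemma~\ref{Lem 2}, use $\operatorname{ord}_a(x_7)=0$, and check that $z\mapsto\overline{x_{7-i}(1/\overline z)}$ has order $\operatorname{ord}_{a^*}(x_{7-i})$ at $a$; you carry out explicitly the local computation that the paper compresses into ``the reversal property.'' One correction to your remark on conventions: they do not agree, since $f^\vee(w)=\overline{f(1/\overline w)}$ gives $f^\vee(1/z)=\overline{f(\overline z)}$, whose order at $a$ is $\operatorname{ord}_{\overline a}(f)$ rather than $\operatorname{ord}_{a^*}(f)$, so you should work with \eqref{f vee}, or directly with the identity $x_i(z)=\overline{x_{7-i}(1/\overline z)}\,x_7(z)$ obtained from the boundary relation, which is the only reading under which Lemma~\ref{Lem 2} is valid.
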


\begin{proof}
	By Lemma~\ref{Lem 2},
	\(
	x_i(z)=x_{7-i}^{\vee}(1/z)x_7(z).
	\)
	Since $a$ is neither a zero nor a pole of $x_7$,
	\(
	\operatorname{ord}_{a}(x_7)=0.
	\)
	Taking orders at $a$ and using the reversal property gives
	\(
	\operatorname{ord}_{a}(x_i)
	=
	\operatorname{ord}_{a^*}(x_{7-i}).
	\)
	The final assertion follows immediately.
\end{proof}

\begin{cor}\label{cor:reciprocal-pairing-Theta}
	Let
	\(
	\widetilde{\mathbf{x}}
	=(x_1,x_2,x_3,y_1,y_2)
	\)
	be a rational $\Gamma_{E(3;2;1,2)}$-inner function. Suppose that
	$a\in\mathbb{C}\setminus\{0\}$ is neither a zero nor a pole of
	$x_3$. Then
\[
	\operatorname{ord}_{a}(x_1)
	=
	\operatorname{ord}_{a^*}(y_2)\quad \text{
	and}\quad 
	\operatorname{ord}_{a}(x_2)
	=
	\operatorname{ord}_{a^*}(y_1).
	\]
	Hence the zeros and poles of $x_1$ and $y_2$, as well as those of
	$x_2$ and $y_1$, occur in reciprocal-conjugate pairs with the same
	multiplicities.
\end{cor}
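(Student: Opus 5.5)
The plan is to mirror the proof of Corollary~\ref{cor:reciprocal-pairing}, with Lemma~\ref{Lem 4} taking the place of Lemma~\ref{Lem 2}. Lemma~\ref{Lem 4} gives the identities of rational functions
\[
x_1(z)=y_2^\vee(1/z)\,x_3(z),\qquad x_2(z)=y_1^\vee(1/z)\,x_3(z),
\]
valid on the whole Riemann sphere away from poles. I then take $\operatorname{ord}_a$ of both sides at a fixed $a\in\mathbb C\setminus\{0\}$. For nonzero meromorphic functions the order is additive: $\operatorname{ord}_a(fg)=\operatorname{ord}_a(f)+\operatorname{ord}_a(g)$. The hypothesis that $a$ is neither a zero nor a pole of $x_3$ gives $\operatorname{ord}_a(x_3)=0$. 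Hence
\[
\operatorname{ord}_a(x_1)=\operatorname{ord}_a\big(y_2^\vee(1/z)\big),\qquad \operatorname{ord}_a(x_2)=\operatorname{ord}_a\big(y_1^\vee(1/z)\big).
\]
If some $y_j\equiv 0$, the corresponding $x_i\equiv0$ and both orders are $+\infty$, so that degenerate case is trivial.

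The key step is the reversal identity: for a rational $f$ and $a\neq0$,
\[
\operatorname{ord}_a\big(f^\vee(1/z)\big)=\operatorname{ord}_{a^*}(f).
\]
To prove it, note that on $\mathbb T$ we have $f^\vee(1/z)=\overline{f(z)}$. As rational functions, $f^\vee(1/z)=\overline{f(1/\overline{(1/z)})}$ evaluated suitably; more precisely, $h(z):=f^\vee(1/z)$ equals $\overline{f(1/\overline z)}$, which is the composition of $f$ with the anti-holomorphic reflection $z\mapsto 1/\overline z=z^*$, followed by conjugation. I would then write $f(w)=(w-a^*)^m u(w)$ with $u(a^*)\neq0,\infty$ and $m=\operatorname{ord}_{a^*}(f)$. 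Substituting gives
\[
h(z)=\overline{(1/\overline z-1/\overline a)}^{\,m}\,\overline{u(1/\overline z)}=\Big(\frac{a-z}{az}\Big)^m\,\overline{u(1/\overline z)}.
\]
The second factor is holomorphic and nonvanishing near $a$, since $u^\vee$ is rational. The first factor has order exactly $m$ at $a$, because $a\neq 0$ (which is exactly why $a=0$ is excluded). This yields $\operatorname{ord}_a(h)=m$.

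Combining the two displays gives $\operatorname{ord}_a(x_1)=\operatorname{ord}_{a^*}(y_2)$ and $\operatorname{ord}_a(x_2)=\operatorname{ord}_{a^*}(y_1)$. The "hence" statement follows by reading these equalities with positive order (zeros) and negative order (poles), using that $a\mapsto a^*$ is an involution. The main, though mild, obstacle is making the reversal-of-order computation rigorous. The one point of care is checking that $\overline{u(1/\overline z)}$ is genuinely a holomorphic (rational) function of $z$ near $a$ with nonzero finite value; this holds because it equals $u^\vee(1/z)$.
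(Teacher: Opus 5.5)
Your proof is correct and follows essentially the paper's route. The paper handles this corollary in one line by appeal to Proposition~\ref{Prop 2} (which rests on the identities of Lemma~\ref{Lem 4}), just as Corollary~\ref{cor:reciprocal-pairing} takes orders in Lemma~\ref{Lem 2}; your local computation of $\operatorname{ord}_a\bigl(f^\vee(1/z)\bigr)=\operatorname{ord}_{a^*}(f)$ for $a\neq0$ makes explicit the ``reversal property'' the paper leaves implicit, and it covers zeros as well as poles, whereas Proposition~\ref{Prop 2} by itself only addresses poles.

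The only blemish is the intermediate expression $\overline{f(1/\overline{(1/z)})}$. It simplifies to $\overline{f(\overline z)}$ and contradicts your next clause; it comes from the paper's conflicting definition of $f^\vee$ in the proof of Lemma~\ref{Lem 2}. With $f^\vee(w)=\overline{f(\overline w)}$ as in \eqref{f vee}, one has $f^\vee(1/z)=\overline{f(1/\overline z)}$ directly, which is exactly the formula your computation uses and the one needed for $f^\vee(1/z)=\overline{f(z)}$ on $\mathbb T$.
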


\begin{proof}
	This follows directly from Proposition~\ref{Prop 2} and the fact
	that $a$ is neither a zero nor a pole of $x_3$. This completes the proof.
\end{proof}

\begin{prop}\label{prop:reciprocal-orbits}
	Let $\mathbf{x}$ be a rational
	$\Gamma_{E(3;3;1,1,1)}$-inner function. Suppose that $a\neq0$ is either
	a zero or a  pole of $x_i$, $1\leq i\leq3$, and that $a$ is neither a
	zero nor a pole of $x_7$. Then $a^*$ is  either a zero or a pole, respectively,
	of $x_{7-i}$ with the same multiplicity. Moreover,
	\(
	(a^*)^*=a.
	\)
In particular,  the corresponding zeros and poles occur in reciprocal pairs.
\end{prop}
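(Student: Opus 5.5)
The plan is to derive the statement directly from the identity of rational functions in Lemma~\ref{Lem 2}, namely
\[
x_i(z)=x_{7-i}^{\vee}(1/z)\,x_7(z),\qquad 1\leq i\leq 6,
\]
essentially as a restatement of Corollary~\ref{cor:reciprocal-pairing}, with the orders computed explicitly.

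First, fix $1\leq i\leq 3$ and a point $a\neq 0$ with $\operatorname{ord}_a(x_7)=0$. Taking orders at $a$ on both sides of the identity, and using additivity of $\operatorname{ord}$ for products of meromorphic functions, gives
\[
\operatorname{ord}_a(x_i)=\operatorname{ord}_a\bigl(x_{7-i}^{\vee}(1/z)\bigr).
\]

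Second, I will compute the right-hand side. By definition, $x_{7-i}^{\vee}(1/z)=\overline{x_{7-i}(1/\overline z)}=\overline{x_{7-i}(z^*)}$, where $z\mapsto z^*=1/\overline z$ is an anti-holomorphic diffeomorphism near $a$ with $a\mapsto a^*$.

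If $x_{7-i}(w)=(w-a^*)^m g(w)$ with $g(a^*)\neq 0$ and $g$ holomorphic near $a^*$, then conjugating and substituting $w=z^*$ gives
\[
\overline{x_{7-i}(z^*)}=\Bigl(\frac{1}{z}-\frac{1}{a}\Bigr)^m\overline{g(z^*)}=\Bigl(\frac{a-z}{az}\Bigr)^m\overline{g(z^*)} .
\]
Here $\overline{g(z^*)}$ is holomorphic and nonvanishing near $a$. The factor $(a-z)^m/(az)^m$ has order exactly $m$ at $a$, because $a\neq0$. This holds for any integer $m$, positive (zeros) or negative (poles). Hence $\operatorname{ord}_a(x_{7-i}^{\vee}(1/z))=\operatorname{ord}_{a^*}(x_{7-i})$.

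Combining the two steps gives $\operatorname{ord}_a(x_i)=\operatorname{ord}_{a^*}(x_{7-i})$. If $a$ is a zero (resp.\ pole) of $x_i$ of multiplicity $m$, then $a^*$ is a zero (resp.\ pole) of $x_{7-i}$ of multiplicity $m$. The involution property $(a^*)^*=1/\overline{(1/\overline a)}=a$ is immediate from the definition. The ``reciprocal pairs'' conclusion is then just the observation that $a\leftrightarrow a^*$ pairs the zero/pole sets of $x_i$ and $x_{7-i}$ away from the zeros and poles of $x_7$.

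The only delicate point is the local computation in the second step. There one must check that the reversal $f\mapsto f^\vee(1/\cdot)$ transports orders from $a^*$ to $a$ without change, which requires $a\neq 0$ and $a\neq\infty$ so that $z\mapsto 1/z$ is biholomorphic near $a$. I also want to make sure the hypothesis $\operatorname{ord}_a(x_7)=0$ is used exactly once, to cancel the $x_7$ term. No deeper obstacle is expected.
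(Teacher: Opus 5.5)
Your proposal is correct and follows the paper's route. The paper deduces the first assertion from Corollary~\ref{cor:reciprocal-pairing}, which it proves by taking orders at $a$ in the identity of Lemma~\ref{Lem 2} and cancelling $\operatorname{ord}_a(x_7)=0$; your explicit local computation of $\operatorname{ord}_a\bigl(x_{7-i}^{\vee}(1/z)\bigr)=\operatorname{ord}_{a^*}(x_{7-i})$, with $f^\vee(z)=\overline{f(\overline z)}$ as in \eqref{f vee}, fills in the ``reversal property'' step that the paper leaves implicit.
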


\begin{proof}
	The first assertion follows from Corollary~\ref{cor:reciprocal-pairing}.
	The second assertion follows from
	\[
	(a^*)^*
	=
	\frac{1}{\overline{1/\overline a}}
	=a.
	\] This completes the proof.
\end{proof}

\begin{prop}\label{prop:degree-estimate}
	Let
	\(
	\mathbf{x}=(x_1,\ldots,x_7)
	\)
	be a rational $\Gamma_{E(3;3;1,1,1)}$-inner function. Then
	\[
	\deg x_i
	\leq
	\deg x_{7-i}+\deg x_7,
	\quad 1\leq i\leq3.
	\]
	Similarly, if
	\(
	\widetilde{\mathbf{x}}
	=(x_1,x_2,x_3,y_1,y_2)
	\)
	is a rational $\Gamma_{E(3;2;1,2)}$-inner function, then
	\[
	\deg x_1\leq\deg y_2+\deg x_3~\text{
	and}~
	\deg x_2\leq\deg y_1+\deg x_3.
	\]
\end{prop}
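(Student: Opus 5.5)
The plan is to start from the identity of rational functions in Lemma~\ref{Lem 2},
\[
x_i(z)=x_{7-i}^{\vee}(1/z)\,x_7(z),\qquad 1\leq i\leq 3,
\]
and to combine it with two elementary facts about the degree of a rational function, where $\deg f$ is the maximum of the degrees of numerator and denominator in lowest terms, that is, the number of poles of $f$ on the Riemann sphere counted with multiplicity. The first fact is subadditivity: $\deg(fg)\leq\deg f+\deg g$ for rational $f,g$. To see it, write $f=p/q$ and $g=r/s$ in lowest terms. Then $fg=pr/(qs)$, possibly after cancellation, and cancellation can only lower degrees. So the degree of $fg$ is at most
\[
\max\{\deg p+\deg r,\ \deg q+\deg s\}\leq \max\{\deg p,\deg q\}+\max\{\deg r,\deg s\}.
\]
Equivalently, every pole of $fg$ on $\widehat{\mathbb C}$ is a pole of $f$ or of $g$, with multiplicity at most the sum of the two multiplicities.

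The second fact is that the transformation $f\mapsto f^{\vee}(1/z)=\overline{f(1/\overline z)}$ preserves degree. This map is the composition of $f$ with the anti-holomorphic involution $z\mapsto 1/\overline z$ of the Riemann sphere, followed by complex conjugation. Concretely, if $f=p/q$ with $\max\{\deg p,\deg q\}=d$, then
\[
f^{\vee}(1/z)=\frac{p^{\sim d}(z)}{q^{\sim d}(z)}
\]
by \eqref{f sim, f vee}. I will check that this is still in lowest terms and has degree exactly $d$. Common zeros of $p^{\sim d}$ and $q^{\sim d}$ in $\mathbb C\setminus\{0\}$ correspond under $a\mapsto a^*$ to common zeros of $p$ and $q$, so there are none. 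A common zero at $0$ would force $\deg p<d$ and $\deg q<d$, which contradicts the choice of $d$. Hence $\deg x_{7-i}^{\vee}(1/\cdot)=\deg x_{7-i}$.

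Putting these together gives
\[
\deg x_i=\deg\bigl(x_{7-i}^{\vee}(1/\cdot)\,x_7\bigr)\leq \deg x_{7-i}+\deg x_7,\qquad 1\leq i\leq 3.
\]
For the $\Gamma_{E(3;2;1,2)}$ case, the same argument applies verbatim using Lemma~\ref{Lem 4}, namely $x_1=y_2^{\vee}(1/\cdot)\,x_3$ and $x_2=y_1^{\vee}(1/\cdot)\,x_3$. It yields $\deg x_1\leq\deg y_2+\deg x_3$ and $\deg x_2\leq \deg y_1+\deg x_3$.

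The only step needing care is the degree-invariance of the reflection, in particular the bookkeeping at $0$ and $\infty$, where the powers of $z$ introduced by $\sim d$ could a priori create spurious cancellation. As a cross-check I will also phrase it via pole counting. The poles of $f^{\vee}(1/\cdot)$ at $a\in\widehat{\mathbb C}$ correspond bijectively, with the same multiplicity, to the poles of $f$ at $a^*$, with the conventions $0^*=\infty$ and $\infty^*=0$. This is exactly the local computation already carried out in the proof of Proposition~\ref{Prop 1}. Since degree equals the total pole count on the sphere, invariance follows. Alternatively, Proposition~\ref{Prop 1}(2) bounds the poles of $x_i$ by those of $x_7$ together with the reflected poles of $x_{7-i}$, which gives the same inequality.
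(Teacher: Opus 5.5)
Your proposal is correct and follows the same route as the paper. You apply the identity $x_i(z)=x_{7-i}^{\vee}(1/z)\,x_7(z)$ from Lemma~\ref{Lem 2} (resp.\ Lemma~\ref{Lem 4}), use that the reflection $f\mapsto f^{\vee}(1/\cdot)$ preserves degree, and bound the degree of a product by the sum of the degrees, additionally proving subadditivity and the degree invariance (via $f^{\vee}(1/z)=p^{\sim d}/q^{\sim d}$), which the paper only asserts.
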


\begin{proof}
	By Lemma~\ref{Lem 2},
	\(
	x_i(z)=x_{7-i}^{\vee}(1/z)x_7(z),1\leq i\leq3.
	\)
	Since reversal and the transformation $z\mapsto1/z$ preserve the
	degree of a rational function,
	\(
	\deg\big(x_{7-i}^{\vee}(1/z)\big)=\deg x_{7-i}.
	\)
	Therefore
	\[
	\deg x_i
	\leq
	\deg x_{7-i}+\deg x_7.
	\]
	The second assertion follows similarly from Lemma~\ref{Lem 4}:
	\[
	x_1(z)=y_2^\vee(1/z)x_3(z),
	\quad
	x_2(z)=y_1^\vee(1/z)x_3(z).
	\] This completes the proof.
\end{proof}

\begin{rem}\label{rem:fixed-points}
	The involution
	\(
	a\longmapsto a^*=\frac{1}{\overline a}
	\)
	has as its fixed-point set precisely the unit circle:
	\[
	a^*=a
	\quad\Longleftrightarrow\quad
	|a|=1.
	\]
	Thus the reciprocal pairing described above collapses on
	$\mathbb{T}$. This is consistent with the boundary relations for
	inner functions. In particular, for almost every $z\in\mathbb{T}$,
	\[
	x_i(z)=\overline{x_{7-i}(z)}x_7(z),
	\quad 1\leq i\leq3,
	\]
	with $|x_7(z)|=1$, while
	\[
	x_1(z)=\overline{y_2(z)}x_3(z),
	\quad
	x_2(z)=\overline{y_1(z)}x_3(z),
	\]
	with $|x_3(z)|=1$.
Propositions~\ref{Prop 1} and~\ref{Prop 2}, together with the
	preceding corollaries, show that the components of a rational
	$\Gamma_{E(3;3;1,1,1)}$-inner function and a rational
	$\Gamma_{E(3;2;1,2)}$-inner function are subject to a strong
	reciprocal  structure. In particular, the map
	\(
	a\longmapsto\frac{1}{\overline a}
	\)
	provides the natural symmetry relating the corresponding component
	functions. The distinguished components $x_7$ and $x_3$ determine
	the possible deviation from an exact reciprocal correspondence.
\end{rem}	
\section{Degrees of Rational $\Gamma_{E(3;3;1,1,1)}$-Inner Functions and
$\Gamma_{E(3;2;1,2)}$-Inner Functions}
\label{Degree}
Let $f$ be a nonconstant rational function represented in reduced form
as
\[
f=\frac{p}{q},
\quad
p,q\in\mathbb{C}[z],
\quad
\gcd(p,q)=1.
\]
We define the degree of $f$ by
\[
\deg(f):=\max\{\deg p,\deg q\}.
\]
Equivalently, $\deg(f)$ is the degree of $f$ as a holomorphic map
from the Riemann sphere $\widehat{\mathbb{C}}$ onto itself.
In this section, we introduce two natural notions of degree for
rational $\Gamma_{E(3;3;1,1,1)}$-inner functions and rational
$\Gamma_{E(3;2;1,2)}$-inner functions. The first notion is defined
in terms of the dimension of an associated model space in the Hardy
space. The second is defined topologically using the homomorphism
induced on the fundamental groups. We shall show that these two
notions coincide.
Let $H^2$ denote the Hardy space on the unit circle $\mathbb{T}$.
We use the notation
\begin{equation}\label{H^2bar}
	\overline{H^2}
	:=
	\{f\in L^2(\mathbb{T}):\overline{f}\in H^2\}
\end{equation}
and
\begin{equation}\label{H2minus}
	H^2_-
	:=
	\left\{
	f\in L^2(\mathbb{T}):
	\widehat f(m)=0
	\text{ for all }m\geq0
	\right\}.
\end{equation}
It is well known that
\(
\overline{H^2}
=
zH^2_-,
\)
where $z=\operatorname{id}_{\mathbb D}$ denotes the identity function
on $\mathbb D$.
We first consider rational
$\Gamma_{E(3;3;1,1,1)}$-inner functions.
Let
\(
\mathbf{x}=(x_1,\ldots,x_7)
\)
be a rational $\Gamma_{E(3;3;1,1,1)}$-inner function. By Lemma~\ref{Lem 1},
the function $x_7$ is an inner function. Since $x_7$ is rational, it is
a finite Blaschke product. Suppose that
\(
\deg(x_7)=n.
\)
Furthermore, since $\mathbf{x}$ is
$\Gamma_{E(3;3;1,1,1)}$-inner,
\[
\mathbf{x}(z)\in K
\quad\text{for almost every }z\in\mathbb{T}.
\]
In particular,
\(
x_i\in H^\infty(\mathbb D)\subset H^2, 1\leq i\leq7.
\)
By Lemma~\ref{Lem 1}, for almost every $z\in\mathbb{T}$,
\begin{equation}\label{eq:boundary-relation-degree}
	x_i(z)
	=
	\overline{x_{7-i}(z)}x_7(z),
	\quad 1\leq i\leq6,
\end{equation}
and
\(
|x_7(z)|=1.
\)
Since $x_i\in H^2$, \eqref{eq:boundary-relation-degree} implies
\(
\overline{x_{7-i}}x_7\in H^2.
\)
Taking complex conjugates, we obtain
\(
x_{7-i}\overline{x_7}\in\overline{H^2}.
\)
Since $x_7$ is inner, multiplication by $x_7$ is unitary on
$L^2(\mathbb T)$. Hence
\[
x_{7-i}
=
x_7(x_{7-i}\overline{x_7})
\in x_7\overline{H^2}.
\]
Using $\overline{H^2}=zH^2_-$, it follows that
\(
x_{7-i}\in (zx_7)H^2_-.
\)
Consequently,
\begin{equation}\label{eq:model-intersection}
	x_{7-i}
	\in
	H^2\cap(zx_7)H^2_-,
	\quad 1\leq i\leq6.
\end{equation}
Since $zx_7$ is a finite Blaschke product, it is an inner function.
Moreover,
\[
\deg(zx_7)=n+1.
\]
For an inner function $\theta$, one has
\(
H^2\cap\theta H^2_-
=
H^2\ominus\theta H^2.
\)
Therefore, if
\[
\mathcal K_{\mathbf{x}}
:=
H^2\ominus(zx_7)H^2,
\]
then
\begin{equation}\label{eq:model-space-x}
	H^2\cap(zx_7)H^2_-
	=
	\mathcal K_{\mathbf{x}}.
\end{equation}
The space $\mathcal K_{\mathbf{x}}$ is the model space associated
with the finite Blaschke product $zx_7$; see, for example,
\cite[Definition 3.1.1]{Nikolski}. Since $zx_7$ has degree $n+1$,
\(
\dim\mathcal K_{\mathbf{x}}
=
n+1.
\)
This motivates the following definition.
\begin{defn}\label{Defn:degree-Gamma}
	Let
	\(
	\mathbf{x}=(x_1,\ldots,x_7)
	\)
	be a rational $\Gamma_{E(3;3;1,1,1)}$-inner function. We define its
	degree by
	\(
		\deg(\mathbf{x}):=\deg(x_7).
	\)
	Thus, if $\deg(x_7)=n$, then
	\[
	\dim\mathcal K_{\mathbf{x}}=n+1=\deg(\mathbf{x})+1.
	\]
\end{defn}
We next introduce the corresponding notion for rational
$\Gamma_{E(3;2;1,2)}$-inner functions.
Let
\(
\widetilde{\mathbf{x}}
=
(x_1,x_2,x_3,y_1,y_2)
\)
be a rational $\Gamma_{E(3;2;1,2)}$-inner function. By Lemma~\ref{Lem 3},
$x_3$ is an inner function. Since $x_3$ is rational, it is a finite
Blaschke product. Suppose that
\(
\deg(x_3)=n.
\)
As in the preceding argument, Lemma~\ref{Lem 3} gives
\[
x_1(z)=\overline{y_2(z)}x_3(z),
\quad
x_2(z)=\overline{y_1(z)}x_3(z)
\]
for almost every $z\in\mathbb T$. Hence
\(
y_2\overline{x_3},
y_1\overline{x_3}
\in H^2.
\)
Taking conjugates and using the fact that $x_3$ is inner, we obtain
\[
y_2,y_1
\in x_3\overline{H^2}
=
(zx_3)H^2_-.
\]
Since $y_1,y_2\in H^2$, it follows that
\[
y_j
\in
H^2\cap(zx_3)H^2_-,
\quad j=1,2.
\]
Thus, defining
\begin{equation}\label{eq:model-space-Theta}
	\mathcal K_{\widetilde{\mathbf{x}}}
	:=
	H^2\ominus(zx_3)H^2,
\end{equation}
we have
\[
H^2\cap(zx_3)H^2_-
=
\mathcal K_{\widetilde{\mathbf{x}}}.
\]
Since $zx_3$ is a finite Blaschke product of degree $n+1$,
\(
\dim\mathcal K_{\widetilde{\mathbf{x}}}
=
n+1.
\)
We therefore make the following definition.
\begin{defn}\label{Defn:degree-Theta}
	Let
	\(
	\widetilde{\mathbf{x}}
	=
	(x_1,x_2,x_3,y_1,y_2)
	\)
	be a rational $\Gamma_{E(3;2;1,2)}$-inner function. We define its
	degree by
	\(
		\deg(\widetilde{\mathbf{x}}):=\deg(x_3).
	\)
	Thus, if $\deg(x_3)=n$, then
	\[
	\dim\mathcal K_{\widetilde{\mathbf{x}}}
	=
	n+1
	=
	\deg(\widetilde{\mathbf{x}})+1.
	\]
\end{defn}
In what follows, we assume that the degrees of the rational inner
functions under consideration are positive.
We now introduce a topological notion of degree. Recall that the
degree of a continuous map from $\mathbb T$ into $\mathbb T$ can be
identified with the integer induced on the fundamental group. We
shall use the same idea for the  sets $K$ and
$K_1$.

\begin{defn}\label{Defn 3}
	Let
	\(
	\mathbf{x}=(x_1,\ldots,x_7)
	\)
	be a rational $\Gamma_{E(3;3;1,1,1)}$-inner function. Since
	$\mathbf{x}$ is continuous on $\mathbb T$ and
	\(
	\mathbf{x}(\mathbb T)\subseteq K,
	\)
	it induces a homomorphism
	\[
	\mathbf{x}_*:
	\pi_1(\mathbb T,1)
	\longrightarrow
	\pi_1(K,\mathbf{x}(1)).
	\]
	After identifying $\pi_1(K,\mathbf{x}(1))$ with $\mathbb Z$, we
	define the topological degree of $\mathbf{x}$ by
	\[
	\deg_{\mathrm{top}}(\mathbf{x})
	:=
	\mathbf{x}_*(1),
	\]
	where $1\in\mathbb Z=\pi_1(\mathbb T,1)$ denotes the standard
	generator.
\end{defn}
We next determine the fundamental group of $K$.
\begin{lem}\label{Lem 5}
	The space $K$ is homotopy equivalent to $\mathbb T$. In particular,
	\[
	\pi_1(K)\cong\mathbb Z.
	\]
\end{lem}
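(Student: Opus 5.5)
We will show that $K$ deformation retracts onto the circle of its points with $x_1=\cdots=x_6=0$, namely $\{(0,\ldots,0,\lambda):|\lambda|=1\}\cong\mathbb T$. The key map is the last-coordinate projection
\[
p:K\to\mathbb T,\qquad p(\mathbf x)=x_7,
\]
which is well defined because $|x_7|=1$ on $K$. Let $s:\mathbb T\to K$ be $s(\lambda)=(0,\ldots,0,\lambda)$.

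First check that $s(\lambda)\in K$. Take $A=\operatorname{diag}(0,0,\lambda)$ perturbed suitably, or more simply the unitary matrix $A=\operatorname{diag}(1,1,\lambda)$ conjugated appropriately. A cleaner choice is a unitary $A$ with $a_{11}=a_{22}=a_{33}=0$ and all $2\times2$ principal minors zero. For instance, a weighted cyclic permutation matrix $A=\begin{pmatrix}0&0&\lambda\\1&0&0\\0&1&0\end{pmatrix}$ is unitary, has zero diagonal and zero $2\times 2$ principal minors, and has $\det A=\lambda$. By \cite{apal1}, $\mu_{E(3;3;1,1,1)}(A)\le\|A\|=1$, so $\Pi_{E(3;3;1,1,1)}(A)=(0,\ldots,0,\lambda)\in\Gamma_{E(3;3;1,1,1)}$, and Proposition \ref{Prop K} then gives $s(\lambda)\in K$. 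Clearly $p\circ s=\mathrm{id}_{\mathbb T}$.

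Next, for $t\in[0,1]$ define
\[
H_t(\mathbf x)=(tx_1,\,tx_2,\,t^2x_3,\,tx_4,\,t^2x_5,\,t^2x_6,\,x_7).
\]
This scaling is the natural one induced by $A\mapsto \operatorname{diag}(t,t,t)$-type conjugation and multiplication, though it does not literally preserve $\det$. The homotopy is continuous, with $H_1=\mathrm{id}$ and $H_0=s\circ p$. Moreover, it preserves the relations $x_i=\overline{x_{7-i}}x_7$, since both sides scale by the same power of $t$ (for instance $tx_1=\overline{t^2x_6}\cdot x_7/t$ fails). So the weights must be chosen more carefully: the relations pair $x_1\leftrightarrow x_6$, $x_2\leftrightarrow x_5$, $x_3\leftrightarrow x_4$, and each pair must scale by the same real factor. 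Hence I will use a single factor, $H_t(\mathbf x)=(tx_1,\ldots,tx_6,x_7)$, which does preserve the relations and $|x_7|=1$.

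The main obstacle is showing that $H_t(\mathbf x)\in\Gamma_{E(3;3;1,1,1)}$. By Proposition \ref{Prop K}, this is all that is needed for membership in $K$. I would attack it with the characterization of $\Gamma_{E(3;3;1,1,1)}$ from \cite[Theorem 2.29]{apal1} via the functions $\Psi^{(i)}$ of \eqref{Psi_1}--\eqref{Psi_3}. The aim is to verify that $\|\Psi^{(i)}(\cdot,H_t(\mathbf x))\|_{H^\infty(\mathbb D^2)}\le1$ together with the nonvanishing of the denominators, or to use a description of $K$ as a quotient of a torus-like set. If that fails, the fallback is to use the parametrization of $K$ furnished by \cite{apal1}. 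There $K$ is described as the image of the unitary-type boundary points, and the known structure of the distinguished boundary of $\Gamma_{E(2;2;1,1)}$, namely $\{(a,\overline a\lambda,\lambda):|a|\le1,|\lambda|=1\}$, shows that each fiber $p^{-1}(\lambda)$ is a subset of the set of $(x_1,\ldots,x_6)$ with $|x_i|\le1$. The remaining step is then to show each fiber is star-shaped about $0$, which is exactly the scaling statement.

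Granting the homotopy, $s\circ p\simeq\mathrm{id}_K$ and $p\circ s=\mathrm{id}_{\mathbb T}$. Therefore $K\simeq\mathbb T$, and $\pi_1(K)\cong\pi_1(\mathbb T)\cong\mathbb Z$, with the isomorphism induced by $p$. This also shows that $\deg_{\mathrm{top}}(\mathbf x)$ equals the winding number of $x_7|_{\mathbb T}$.
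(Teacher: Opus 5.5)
Your route is the paper's: the same projection $p$, the same section $s$, and the same homotopy $H_t(\mathbf x)=(tx_1,\dots,tx_6,x_7)$. Your check that $s(\lambda)\in K$ via the unitary cyclic matrix is correct. Keep only that construction: any unitary conjugate of $\mathrm{diag}(1,1,\lambda)$ has trace $2+\lambda\neq0$, and the abandoned weighted homotopy should be deleted.

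The genuine gap is the step you flag and then grant. Nothing in the proposal shows $H_t(\mathbf x)\in\Gamma_{E(3;3;1,1,1)}$, and your ``star-shaped fibres'' fallback is the same claim restated. Preserving $|x_7|=1$ and $x_i=\overline{x_{7-i}}x_7$ is not enough. To see this, put
\[
P_{\mathbf x}(z)=1-x_1z_1-x_2z_2-x_4z_3+x_3z_1z_2+x_5z_1z_3+x_6z_2z_3-x_7z_1z_2z_3 ,
\]
so that $P_{\Pi(A)}(z)=\det\bigl(I-A\,\mathrm{diag}(z_1,z_2,z_3)\bigr)$. Since $\Pi_{E(3;3;1,1,1)}$ is onto $\mathbb C^7$, the definition of $\mu_E$ gives $\Gamma_{E(3;3;1,1,1)}=\{\mathbf x:P_{\mathbf x}\neq0\text{ on }\mathbb D^3\}$. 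Now take $(x_1,x_2,x_4)=(0.9,0.9,0)$, $x_7=1$, with $x_3,x_5,x_6$ forced by the relations. This point satisfies the relations, but $P_{\mathbf x}(1/1.8,1/1.8,0)=0$, so it is not in $\Gamma_{E(3;3;1,1,1)}$. The paper's proof also checks only the relations at this point, so it does not supply the missing step either.

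Here is one way to close the gap.

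\textbf{Parametrization.} For $\mathbf x\in K$ write $\lambda=x_7$ and $(d_1,d_2,d_3)=(x_1,x_2,x_4)$. Then $x_3=\lambda\bar d_3$, $x_5=\lambda\bar d_2$, $x_6=\lambda\bar d_1$, and $H_t$ simply replaces $d$ by $td$.

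\textbf{Necessity.} Solving $P_{\mathbf x}(0,z_2,\zeta)=0$ for $z_2$ gives $|d_2-\lambda\bar d_1\zeta|\le|1-d_3\zeta|$ for $\zeta\in\mathbb D$, hence on $\mathbb T$ by continuity. Expanding on $\mathbb T$, this is equivalent to
\[
a+2|d_3-v|\le1,\qquad a=|d_1|^2+|d_2|^2-|d_3|^2,\quad v=\lambda\bar d_1\bar d_2 .
\]

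\textbf{Sufficiency when $|d_3|<1$.} Suppose this inequality holds. For $\zeta\in\mathbb T$ one computes
\[
P_{\mathbf x}(z_1,z_2,\zeta)=(1-d_3\zeta)(1-\bar BCz_1-Bz_2+Cz_1z_2),\qquad B=\frac{d_2-\lambda\bar d_1\zeta}{1-d_3\zeta},\quad C=\frac{\lambda(\bar d_3-\zeta)}{1-d_3\zeta},
\]
with $|C|=1\ge|B|$. Such a tetrablock factor has no zeros in $\mathbb D^2$, by the identity $|1-\bar Bu|^2-|u-B|^2=(1-|u|^2)(1-|B|^2)$ with $u=Cz_1$. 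Next write $P_{\mathbf x}=c_0+c_1z_3$ with $c_0,c_1$ polynomials in $(z_1,z_2)$. Zero-freeness on $\mathbb D^2\times\mathbb T$ forces $|c_0|\neq|c_1|$ on the connected set $\mathbb D^2$. At the origin $|c_0|-|c_1|=1-|d_3|>0$, so $|c_0|>|c_1|$ throughout and $P_{\mathbf x}\neq0$ on $\mathbb D^3$.

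\textbf{Scaling.} Under $d\mapsto td$ the left side of the inequality becomes $t^2a+2t|d_3-tv|$. Let $p,q,r=|d_1|,|d_2|,|d_3|$. Using $|d_3-tv|\le|d_3-v|+(1-t)pq$ and $2|d_3-v|\le1-a$,
\[
t^2a+2t|d_3-tv|\le t+t(1-t)\bigl(r^2-(p-q)^2\bigr)\le1 .
\]
Moreover $|td_3|<1$ for $t<1$, so the sufficiency step applies. Hence $H_t(K)\subseteq K$, and with this inserted your argument (and the paper's) is complete.
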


\begin{proof}
	Define maps
	\(
	f:K\longrightarrow\mathbb T\) by 
	\[f(x_1,\ldots,x_7)=x_7,
	\]
	and
	\(
	g:\mathbb T\longrightarrow K\) by \[
	g(\lambda)=(0,\ldots,0,\lambda).
	\]
	The map $g$ is well defined since
	\(
	(0,\ldots,0,\lambda)\in K
	~\text{for every }~\lambda\in\mathbb T.
	\)
Clearly,
	\(
	f\circ g=\operatorname{id}_{\mathbb T}.
	\)
On the other hand, for
$\mathbf{x}=(x_1,\ldots,x_7)\in K$ and $t\in[0,1]$, define
\[
H(\mathbf{x},t)
=
(tx_1,\ldots,tx_6,x_7).
\]
Since $\mathbf{x}\in K$, we have
\(
x_i=\overline{x_{7-i}}x_7, 1\leq i\leq6.
\)
Consequently,
\(
tx_i
=
t\overline{x_{7-i}}x_7
=
\overline{tx_{7-i}}x_7, 1\leq i\leq6,
\)
and hence
\(
H(\mathbf{x},t)\in K.
\)
Thus $H$ is a continuous map from $K\times[0,1]$ into $K$ satisfying
\[
H(\mathbf{x},0)
=
(0,\ldots,0,x_7)
=
(g\circ f)(\mathbf{x})
\]
and
\(
H(\mathbf{x},1)=\mathbf{x}.
\)
Therefore
\(
g\circ f\simeq\operatorname{id}_K.
\)
Together with
\(
f\circ g=\operatorname{id}_{\mathbb T},
\)
this shows that $f$ and $g$ are homotopy inverses. Hence
\(
K\simeq\mathbb T.
\)
Therefore
\(
\pi_1(K)\cong\pi_1(\mathbb T)\cong\mathbb Z.
\)
\end{proof}

\begin{prop}\label{Prop 3}
	Let
	\(
	\mathbf{x}=(x_1,\ldots,x_7)
	\)
	be a rational $\Gamma_{E(3;3;1,1,1)}$-inner function. Then its
	topological degree coincides with its degree in the sense of
	Definition~\ref{Defn:degree-Gamma}. More precisely,
	\[
		\deg_{\mathrm{top}}(\mathbf{x})
		=
		\deg(\mathbf{x})
		=
		\deg(x_7).
	\]
\end{prop}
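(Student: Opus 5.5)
The plan is to push the computation of $\mathbf{x}_*$ through the homotopy equivalence of Lemma~\ref{Lem 5}. Let $f:K\to\mathbb T$, $f(x_1,\ldots,x_7)=x_7$, be the map used there. Since $f$ is a homotopy equivalence, $f_*:\pi_1(K,\mathbf{x}(1))\to\pi_1(\mathbb T,x_7(1))$ is an isomorphism. We fix the identification $\pi_1(K,\mathbf{x}(1))\cong\mathbb Z$ in Definition~\ref{Defn 3} to be the one transported by $f_*$ from the standard identification $\pi_1(\mathbb T)\cong\mathbb Z$. The homotopy $H$ of Lemma~\ref{Lem 5} does not move the last coordinate, so this choice is canonical, and we take it as the meaning of ``after identifying''. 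With this convention,
\[
\deg_{\mathrm{top}}(\mathbf{x})=\mathbf{x}_*(1)=(f\circ\mathbf{x}|_{\mathbb T})_*(1)=(x_7|_{\mathbb T})_*(1),
\]
which is the winding number of the loop $x_7|_{\mathbb T}:\mathbb T\to\mathbb T$ about $0$.

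Next we compute this winding number. By Lemma~\ref{Lem 1}, $x_7$ is inner, and since it is rational it is a finite Blaschke product with no poles on $\overline{\mathbb D}$ (as noted in the introduction). Write
\[
x_7(z)=c\prod_{j=1}^n\frac{z-\alpha_j}{1-\overline{\alpha_j}z},
\]
with $|c|=1$, $\alpha_j\in\mathbb D$ and $n=\deg x_7$. This matches the degree convention $\deg(f)=\max\{\deg p,\deg q\}$, because numerator and denominator are coprime: each $\alpha_j\in\mathbb D$ while each $1/\overline{\alpha_j}\notin\overline{\mathbb D}$. Winding numbers of products of nonvanishing loops add, so it suffices to treat one factor. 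Each factor $\frac{z-\alpha}{1-\overline\alpha z}$ restricted to $\mathbb T$ has winding number $1$, by either of two arguments.
\begin{itemize}
\item The argument principle: the factor has exactly one zero and no poles in $\overline{\mathbb D}$.
\item A homotopy: $t\mapsto\frac{z-t\alpha}{1-t\overline\alpha z}$, $t\in[0,1]$, deforms the factor to $z$ through maps $\mathbb T\to\mathbb T$.
\end{itemize}
The constant $c$ contributes $0$. Hence the winding number of $x_7|_{\mathbb T}$ is $n=\deg(x_7)=\deg(\mathbf{x})$, by Definition~\ref{Defn:degree-Gamma}.

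The main obstacle is not this computation but the well-definedness of the identification $\pi_1(K,\mathbf{x}(1))\cong\mathbb Z$. It is only determined up to sign, and it is also based at a varying point. We address both issues through the observation above: the isomorphism induced by $f$ is compatible with change of basepoint, because $K$ is path connected (via $H$) and $\mathbb T$ has abelian fundamental group. With the orientation fixed by $f_*$, the sign is forced and the equality $\deg_{\mathrm{top}}(\mathbf{x})=\deg(x_7)$ follows. We should also note that $\mathbf{x}|_{\mathbb T}$ is continuous with image in $K$ everywhere, not only almost everywhere. This holds because the coordinates extend continuously to $\overline{\mathbb D}$, Lemma~\ref{Lem 2} gives the defining identities of $K$ on all of $\mathbb T$, and $\Gamma_{E(3;3;1,1,1)}$ is closed.
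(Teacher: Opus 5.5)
Your proposal is correct and follows essentially the same route as the paper. You compose with the projection $f(\mathbf{x})=x_7$ and use Lemma~\ref{Lem 5} to see that $f_*$ is an isomorphism fixing the identification of $\pi_1(K)$ with $\mathbb Z$. That reduces everything to the fact that the winding number of the finite Blaschke product $x_7|_{\mathbb T}$ equals $\deg(x_7)$. Your extra details fill in steps the paper only asserts: the factor-by-factor winding-number computation, the basepoint and sign bookkeeping, and the observation that $\mathbf{x}(\mathbb T)\subseteq K$ holds everywhere on $\mathbb T$ by continuity.
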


\begin{proof}
	Consider the map
	\[
	f:K\longrightarrow\mathbb T,
	\qquad
	f(x_1,\ldots,x_7)=x_7.
	\]
	For the rational $\Gamma_{E(3;3;1,1,1)}$-inner function
	$\mathbf{x}$, we have
	\(
	f\circ\mathbf{x}=x_7
	\)
	as maps from $\mathbb T$ into $\mathbb T$.
Passing to the fundamental groups gives
	\(
	f_*\circ\mathbf{x}_*
	=
	(x_7)_*.
	\)
	By Lemma~\ref{Lem 5}, $f$ is a homotopy equivalence and hence
	\[
	f_*:\pi_1(K)\longrightarrow\pi_1(\mathbb T)
	\]
	is an isomorphism. Therefore, under the canonical identification
	of $\pi_1(K)$ with $\mathbb Z$ induced by $f$,
	\[
	\deg_{\mathrm{top}}(\mathbf{x})
	=
	(x_7)_*(1).
	\]
	Since $x_7$ is a finite Blaschke product,
	\(
	(x_7)_*(1)=\deg(x_7).
	\)
	Thus
	\(
	\deg_{\mathrm{top}}(\mathbf{x})
	=
	\deg(x_7)
	=
	\deg(\mathbf{x}).
	\)
\end{proof}
We now obtain the corresponding topological description for
$\Gamma_{E(3;2;1,2)}$.

\begin{lem}\label{lem:K1-homotopy}
	The set  $K_1$ of
	$\Gamma_{E(3;2;1,2)}$ is homotopy equivalent to $\mathbb T$.
	In particular,
	\[
	\pi_1(K_1)\cong\mathbb Z.
	\]
\end{lem}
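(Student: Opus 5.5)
We will mirror the proof of Lemma~\ref{Lem 5}, with $x_3$ now playing the role of the distinguished coordinate. Define
\[
f_1:K_1\longrightarrow\mathbb T,\qquad f_1(x_1,x_2,x_3,y_1,y_2)=x_3,
\]
and
\[
g_1:\mathbb T\longrightarrow K_1,\qquad g_1(\lambda)=(0,0,\lambda,0,0).
\]
The map $f_1$ is well defined and continuous, since $|x_3|=1$ on $K_1$ by definition. Once $g_1$ is known to land in $K_1$, we have $f_1\circ g_1=\operatorname{id}_{\mathbb T}$ at once.

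For the reverse composition we use the straight-line homotopy
\[
H_1(\widetilde{\mathbf{x}},t)=(tx_1,tx_2,x_3,ty_1,ty_2),\qquad t\in[0,1].
\]
It equals $g_1\circ f_1$ at $t=0$ and the identity at $t=1$. The algebraic constraints in $K_1$ are preserved because $t$ is real:
\[
tx_1=\overline{ty_2}\,x_3,\qquad tx_2=\overline{ty_1}\,x_3,
\]
and $|x_3|=1$ is untouched. So $H_1$ is a homotopy $g_1\circ f_1\simeq\operatorname{id}_{K_1}$, provided each $H_1(\widetilde{\mathbf{x}},t)$ lies in $\Gamma_{E(3;2;1,2)}$. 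Given that, $K_1\simeq\mathbb T$ and $\pi_1(K_1)\cong\mathbb Z$.

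The main obstacle is this membership $H_1(\widetilde{\mathbf{x}},t)\in\Gamma_{E(3;2;1,2)}$, including the case $t=0$, which gives $g_1(\lambda)\in K_1$. Lemma~\ref{Lem 5} passes over the corresponding point silently, but it needs justification.

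For $g_1(\lambda)$ we will exhibit a matrix directly. Take the permutation-type matrix $A$ with $a_{12}=a_{23}=1$, $a_{31}=\lambda$ and all other entries zero. Then $a_{11}=0$, $a_{22}+a_{33}=0$, all $2\times2$ principal minors vanish and $\det A=\lambda$, so $\Pi_{E(3;2;1,2)}(A)=(0,0,\lambda,0,0)$. Since $A$ is a scaled permutation matrix with $|\lambda|=1$, it is unitary, so $\|A\|=1$ and hence $\mu_{E(3;2;1,2)}(A)\le\|A\|=1$. This gives $g_1(\lambda)\in K_1$.

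For general $t$ we will use the characterization of $\Gamma_{E(3;2;1,2)}$ from \cite[Theorem 3.5]{Bharali}, namely that the rational function $\Psi_3(\cdot,\widetilde{\mathbf{x}})$ has the right boundedness and that the denominator $y_2z^2-y_1z+1$ does not vanish on $\mathbb D$. We will check that this criterion is stable under $(x_1,x_2,y_1,y_2)\mapsto t(x_1,x_2,y_1,y_2)$, or equivalently that the relevant Schur-type inequalities are preserved. The expected route is to note that on $K_1$ the numerator is $x_3$ times the reflection of the denominator, so the inequality $|\text{numerator}|\le|\text{denominator}|$ on $\mathbb T$ becomes an equality. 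Scaling by $t$ replaces the denominator $1-ty_1z+ty_2z^2$ by a convex combination with the constant $1$, and we will need to show that this keeps its zeros outside $\mathbb D$.

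If this direct check proves awkward, the fallback is to appeal to a description of the distinguished boundary of $\Gamma_{E(3;2;1,2)}$ in terms of $b\Gamma_{E(2;1;2)}$, parallel to the use of \cite[Theorem 7.1]{Abouhajar} in Proposition~\ref{Prop K}. Such a description would reduce the question to the analogous, known contractibility statements for the symmetrized bidisc.
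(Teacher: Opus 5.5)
Your maps $f_1,g_1$ and the homotopy $H_1$ are exactly the paper's, and your $t=0$ check via the unitary matrix with $a_{12}=a_{23}=1$, $a_{31}=\lambda$ is correct. You are also right that $H_1(\widetilde{\mathbf{x}},t)\in\Gamma_{E(3;2;1,2)}$ is not automatic. The paper simply asserts $H_1(\widetilde{\mathbf{x}},t)\in K_1$ from the two algebraic identities. However, your proposal does not close this step either. For $0<t<1$ you only say you ``will need to show'' that $D_t(w):=1-ty_1w+ty_2w^2$ has no zeros in $\mathbb D$, and the fallback is too vague to carry the argument. As written, this is a gap.

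The gap closes quickly along the route you anticipated. Using $x_1=\overline{y_2}x_3$ and $x_2=\overline{y_1}x_3$, the numerator of $\Psi_3(\cdot,H_1(\widetilde{\mathbf{x}},t))$ is $tx_1-tx_2w+x_3w^2=x_3D_t^{\sim 2}(w)$.

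Since $(y_1,y_2)\in\Gamma_{E(2;1;2)}$, write $D(w)=1-y_1w+y_2w^2=(1-\lambda_1w)(1-\lambda_2w)$ with $|\lambda_i|\le1$. For $w\in\mathbb D$ each factor has positive real part, so $D(w)\notin(-\infty,0]$. Hence $D_t(w)=(1-t)+tD(w)\neq0$ on $\mathbb D$.

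Therefore $D_t=(1-\mu_1w)(1-\mu_2w)$ with $|\mu_i|\le1$, that is, $(ty_1,ty_2)\in\Gamma_{E(2;1;2)}$, and
\[
\Psi_3\bigl(w,H_1(\widetilde{\mathbf{x}},t)\bigr)=x_3\prod_{i=1}^{2}\frac{w-\overline{\mu_i}}{1-\mu_iw}.
\]
Here a factor with $|\mu_i|=1$ reduces to the constant $-\overline{\mu_i}$. So $|\Psi_3|\le1$ on $\mathbb D$, and $H_1(\widetilde{\mathbf{x}},t)\in K_1$. Alternatively, the homotopy $(t^2x_1,tx_2,x_3,ty_1,t^2y_2)$ corresponds to $\lambda_i\mapsto t\lambda_i$ and makes membership immediate.

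The same computation shows that $\widetilde{\mathbf{x}}\mapsto(x_3,y_1,y_2)$ is a homeomorphism of $K_1$ onto $\mathbb T\times\Gamma_{E(2;1;2)}$. So the lemma is really the contractibility of the full closed symmetrized bidisc. This is also why your fallback points the wrong way. The $(y_1,y_2)$-coordinates of points of $K_1$ fill all of $\Gamma_{E(2;1;2)}$; for instance $(0,0,\lambda,0,0)\in K_1$ has $(y_1,y_2)=(0,0)\notin b\Gamma_{E(2;1;2)}$. Moreover, $b\Gamma_{E(2;1;2)}$ is a M\"obius band, which is not contractible, so a description of $K_1$ in terms of it cannot give the result.
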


\begin{proof}
	Define
	\(
	f_1:K_1\longrightarrow\mathbb T\) by
	\[f_1(x_1,x_2,x_3,y_1,y_2)=x_3,
	\]
	and
	\(
	g_1:\mathbb T\longrightarrow K_1\) by 
	\[
	g_1(\lambda)=(0,0,\lambda,0,0).
	\]
	By the characterization of $K_1$ in Proposition~\ref{Prop K_1},
	$g_1$ is well defined. Clearly,
	\(
	f_1\circ g_1=\operatorname{id}_{\mathbb T}.
	\)
For
\(
\widetilde{\mathbf{x}}
=(x_1,x_2,x_3,y_1,y_2)\in K_1
\)
and $t\in[0,1]$, define
\[
H_1(\widetilde{\mathbf{x}},t)
=
(tx_1,tx_2,x_3,ty_1,ty_2).
\]
Since
\(
x_1=\overline{y_2}x_3,
x_2=\overline{y_1}x_3
\)
on $K_1$, we have
\(
tx_1=\overline{ty_2}x_3,
tx_2=\overline{ty_1}x_3.
\)
Hence
\(
H_1(\widetilde{\mathbf{x}},t)\in K_1.
\)
Moreover,
\[
H_1(\widetilde{\mathbf{x}},0)
=
(0,0,x_3,0,0)
=
(g_1\circ f_1)(\widetilde{\mathbf{x}})
\]
and
\(
H_1(\widetilde{\mathbf{x}},1)
=
\widetilde{\mathbf{x}}.
\)
Thus
\(
g_1\circ f_1\simeq\operatorname{id}_{K_1}.
\)
Therefore $K_1$ is homotopy equivalent to $\mathbb T$, and hence
\(
\pi_1(K_1)\cong\mathbb Z.
\)
\end{proof}

\begin{defn}\label{defn:top-degree-Theta}
	Let
	\(
	\widetilde{\mathbf{x}}
	=
	(x_1,x_2,x_3,y_1,y_2)
	\)
	be a rational $\Gamma_{E(3;2;1,2)}$-inner function. Then
	$\widetilde{\mathbf{x}}$ induces a homomorphism
	\[
	\widetilde{\mathbf{x}}_*:
	\pi_1(\mathbb T,1)
	\longrightarrow
	\pi_1(K_1,\widetilde{\mathbf{x}}(1)).
	\]
	After identifying $\pi_1(K_1)$ with $\mathbb Z$ through the
	map induced by
	\[
	f_1(x_1,x_2,x_3,y_1,y_2)=x_3,
	\]
	we define
	\(
	\deg_{\mathrm{top}}(\widetilde{\mathbf{x}})
	:=
	\widetilde{\mathbf{x}}_*(1).
	\)
\end{defn}

\begin{prop}\label{prop:degree-Theta}
	Let
	\(
	\widetilde{\mathbf{x}}
	=
	(x_1,x_2,x_3,y_1,y_2)
	\)
	be a rational $\Gamma_{E(3;2;1,2)}$-inner function. Then
	\[
		\deg_{\mathrm{top}}(\widetilde{\mathbf{x}})
	=
	\deg(\widetilde{\mathbf{x}})
	=
	\deg(x_3).
	\]
\end{prop}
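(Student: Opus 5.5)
The plan is to mirror the proof of Proposition~\ref{Prop 3}, with Lemma~\ref{lem:K1-homotopy} playing the role of Lemma~\ref{Lem 5}. First I would check that the boundary map is a well-defined loop in $K_1$. Since $\widetilde{\mathbf{x}}$ is rational, its coordinates have no poles in $\overline{\mathbb D}$, as noted after Definition~\ref{Defn 1}. Hence $\widetilde{\mathbf{x}}$ is continuous on $\mathbb T$. The condition $\widetilde{\mathbf{x}}(z)\in K_1$ holds for almost every $z\in\mathbb T$, and $K_1$ is closed, being defined by closed conditions inside the closed set $\Gamma_{E(3;2;1,2)}$. By continuity, $\widetilde{\mathbf{x}}(\mathbb T)\subseteq K_1$. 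Thus $\widetilde{\mathbf{x}}|_{\mathbb T}:\mathbb T\to K_1$ is continuous, and the homomorphism $\widetilde{\mathbf{x}}_*$ of Definition~\ref{defn:top-degree-Theta} makes sense.

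Next, let $f_1:K_1\to\mathbb T$ be $f_1(x_1,x_2,x_3,y_1,y_2)=x_3$. Then $f_1\circ\widetilde{\mathbf{x}}=x_3$ as maps $\mathbb T\to\mathbb T$, using $|x_3|=1$ on $\mathbb T$ from Lemma~\ref{Lem 3}. Functoriality of $\pi_1$ gives $(f_1)_*\circ\widetilde{\mathbf{x}}_*=(x_3)_*$. By Lemma~\ref{lem:K1-homotopy}, $f_1$ is a homotopy equivalence, so $(f_1)_*$ is an isomorphism $\pi_1(K_1)\to\pi_1(\mathbb T)\cong\mathbb Z$. This isomorphism is exactly the identification used in Definition~\ref{defn:top-degree-Theta}. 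Hence $\deg_{\mathrm{top}}(\widetilde{\mathbf{x}})=(x_3)_*(1)$, the winding number of $x_3|_{\mathbb T}$ about $0$.

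The last step is to show that this winding number equals $\deg(x_3)$. By Lemma~\ref{Lem 3}, $x_3$ is a rational inner function, hence a finite Blaschke product of degree $n=\deg(x_3)$ with $n$ zeros in $\mathbb D$ counted with multiplicity. By the argument principle, the winding number of $x_3(\mathbb T)$ about $0$ equals the number of zeros of $x_3$ in $\mathbb D$, since $x_3$ has no poles in $\overline{\mathbb D}$. It is therefore $n$. Alternatively, each Möbius factor winds once around $\mathbb T$, and winding numbers add under products. Combining this with Definition~\ref{Defn:degree-Theta} gives $\deg_{\mathrm{top}}(\widetilde{\mathbf{x}})=\deg(x_3)=\deg(\widetilde{\mathbf{x}})$.

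There is no real obstacle. The only delicate point is basepoints: $\pi_1$ is taken at $\widetilde{\mathbf{x}}(1)$, and the identification with $\mathbb Z$ must be the one induced by $f_1$. Since $\mathbb T$ has abelian fundamental group, change-of-basepoint isomorphisms are canonical there, so the integer is well defined. The closedness argument in the first step should also be stated explicitly.
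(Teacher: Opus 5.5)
Your proposal is correct and follows the paper's proof. You factor $x_3=f_1\circ\widetilde{\mathbf{x}}$, use Lemma~\ref{lem:K1-homotopy} to identify $\pi_1(K_1)$ with $\mathbb Z$ via $(f_1)_*$, and conclude that the topological degree is the winding number of the finite Blaschke product $x_3$, namely $\deg(x_3)$. Your two extra checks, that $\widetilde{\mathbf{x}}(\mathbb T)\subseteq K_1$ everywhere (by continuity and closedness of $K_1$) and the argument-principle proof of $(x_3)_*(1)=\deg(x_3)$, just make explicit steps the paper takes for granted.
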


\begin{proof}
	By definition of $f_1$,
	\(
	f_1\circ\widetilde{\mathbf{x}}=x_3.
	\)
	Hence, on fundamental groups,
	\[
	(f_1)_*\circ\widetilde{\mathbf{x}}_*
	=
	(x_3)_*.
	\]
	By Lemma~\ref{lem:K1-homotopy}, $(f_1)_*$ is an isomorphism.
	Therefore, after identifying $\pi_1(K_1)$ with $\mathbb Z$,
	\[
	\deg_{\mathrm{top}}(\widetilde{\mathbf{x}})
	=
	(x_3)_*(1).
	\]
	Since $x_3$ is a finite Blaschke product,
	\(
	(x_3)_*(1)=\deg(x_3).
	\)
	Thus
	\(
	\deg_{\mathrm{top}}(\widetilde{\mathbf{x}})
	=
	\deg(x_3)
	=
	\deg(\widetilde{\mathbf{x}}).
	\)
\end{proof}

\begin{cor}\label{cor:model-topological-degree}
	Let $\mathbf{x}$ be a rational
	$\Gamma_{E(3;3;1,1,1)}$-inner function. Then
	\(
	\dim\mathcal K_{\mathbf{x}}
	=
	\deg_{\mathrm{top}}(\mathbf{x})+1.
	\)
	Similarly, if $\widetilde{\mathbf{x}}$ is a rational
	$\Gamma_{E(3;2;1,2)}$-inner function, then
	\(
	\dim\mathcal K_{\widetilde{\mathbf{x}}}
	=
	\deg_{\mathrm{top}}(\widetilde{\mathbf{x}})+1.
	\)
\end{cor}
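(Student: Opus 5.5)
This corollary follows by combining the two degree computations already established: it needs no new analytic input. For a rational $\Gamma_{E(3;3;1,1,1)}$-inner function $\mathbf{x}$ with $\deg(x_7)=n$, I would first recall from the discussion preceding Definition~\ref{Defn:degree-Gamma} that $\mathcal K_{\mathbf{x}}=H^2\ominus(zx_7)H^2$ is the model space of the finite Blaschke product $zx_7$. This product has degree $n+1$, so $\dim\mathcal K_{\mathbf{x}}=n+1=\deg(\mathbf{x})+1$. The only fact used here is the standard one that the model space of a finite Blaschke product of degree $m$ has dimension $m$. It is spanned by the Szeg\H{o} kernels at the zeros, suitably modified by derivative kernels at repeated zeros, as in \cite{Nikolski}.

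Next I would invoke Proposition~\ref{Prop 3}, which gives $\deg_{\mathrm{top}}(\mathbf{x})=\deg(\mathbf{x})=\deg(x_7)=n$. This rests on Lemma~\ref{Lem 5}, through which $f:K\to\mathbb T$ identifies $\pi_1(K)$ with $\mathbb Z$, together with the winding number of a finite Blaschke product on $\mathbb T$. Substituting into the dimension formula yields $\dim\mathcal K_{\mathbf{x}}=\deg_{\mathrm{top}}(\mathbf{x})+1$.

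The $\Gamma_{E(3;2;1,2)}$ case is identical. Definition~\ref{Defn:degree-Theta} and the preceding computation give $\dim\mathcal K_{\widetilde{\mathbf{x}}}=\deg(x_3)+1$, since $zx_3$ is a finite Blaschke product of degree $\deg(x_3)+1$. Proposition~\ref{prop:degree-Theta}, based on Lemma~\ref{lem:K1-homotopy}, gives $\deg_{\mathrm{top}}(\widetilde{\mathbf{x}})=\deg(x_3)$, and combining the two gives the claim.

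There is no genuine obstacle. The only point needing care is that the identification $\pi_1(K)\cong\mathbb Z$ (respectively $\pi_1(K_1)\cong\mathbb Z$) used to define $\deg_{\mathrm{top}}$ is the one induced by $f$ (respectively $f_1$), so that the sign is correct. The other implicit input is that the winding number of a finite Blaschke product on $\mathbb T$ equals its number of zeros in $\mathbb D$. By the argument principle, this is its degree, since all its poles lie outside $\overline{\mathbb D}$.
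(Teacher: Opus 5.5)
Your proposal is correct and matches the paper's proof. Like the paper, you combine $\dim\mathcal K_{\mathbf{x}}=\deg(x_7)+1$ from the model-space construction with $\deg_{\mathrm{top}}(\mathbf{x})=\deg(x_7)$ from Proposition~\ref{Prop 3}, and argue the same way for $\widetilde{\mathbf{x}}$ via Proposition~\ref{prop:degree-Theta}; your remarks on the identification through $f$ (resp.\ $f_1$) and on the winding number of a finite Blaschke product only make explicit what the paper uses implicitly.
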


\begin{proof}
	For $\mathbf{x}$, Definition~\ref{Defn:degree-Gamma} and
	Proposition~\ref{Prop 3} give
	\(
	\deg(\mathbf{x})
	=
	\deg_{\mathrm{top}}(\mathbf{x}).
	\)
	From the construction of $\mathcal K_{\mathbf{x}}$,
	\(
	\dim\mathcal K_{\mathbf{x}}
	=
	\deg(x_7)+1
	=
	\deg(\mathbf{x})+1.
	\)
	Hence
	\(
	\dim\mathcal K_{\mathbf{x}}
	=
	\deg_{\mathrm{top}}(\mathbf{x})+1.
	\)
	The proof for $\widetilde{\mathbf{x}}$ is identical, using
	\(
	\dim\mathcal K_{\widetilde{\mathbf{x}}}
	=
	\deg(x_3)+1.
	\)
\end{proof}

\begin{rem}\label{rem:two-degree-notions}
	The preceding results show that the two natural notions of degree
	considered in this section coincide. For a rational
	$\Gamma_{E(3;3;1,1,1)}$-inner function $\mathbf{x}$,
	\[
		\deg(\mathbf{x})
		=
		\deg(x_7)
		=
		\deg_{\mathrm{top}}(\mathbf{x}),
	\]
	while for a rational $\Gamma_{E(3;2;1,2)}$-inner function
	$\widetilde{\mathbf{x}}$,
	\[
		\deg(\widetilde{\mathbf{x}})
		=
		\deg(x_3)
		=
		\deg_{\mathrm{top}}(\widetilde{\mathbf{x}}).
	\]
	Thus the degree can equivalently be interpreted analytically,
	through the associated model space, and topologically, through the
	induced homomorphism on the fundamental group.
\end{rem}	

\section{Relations Among $\Gamma_{E(3;3;1,1,1)}$-Inner Functions, 
$\Gamma_{E(3;2;1,2)}$-Inner Functions, 
$\Gamma_{E(2;1;2)}$-Inner Functions and 
$\Gamma_{E(2;2;1,1)}$-Inner Functions}
\label{Relation}

In this section, we establish several relations among rational
$\Gamma_{E(3;3;1,1,1)}$-inner functions,
$\Gamma_{E(3;2;1,2)}$-inner functions,
$\Gamma_{E(2;1;2)}$-inner functions, and
$\Gamma_{E(2;2;1,1)}$-inner functions. We first construct several
families of rational $\Gamma_{E(3;3;1,1,1)}$-inner functions and
$\Gamma_{E(3;2;1,2)}$-inner functions from a given inner function.
We then investigate their relations with rational
$\Gamma_{E(2;2;1,1)}$-inner functions.
The notion of a $\Gamma_{E(2;2;1,1)}$-inner function was introduced
and studied in \cite{Alsalhi}. We begin with two immediate
consequences of Lemmas~\ref{Lem 1} and \ref{Lem 3}. We therefore
omit their proofs.

\begin{lem}\label{Lem 8}
Let
\(
\mathbf{x}=(x_1,\ldots,x_7)
\)
be a rational $\Gamma_{E(3;3;1,1,1)}$-inner function. Then, for each
$1\leq i\leq 6$,
\(
(x_i,x_{7-i},x_7)
\)
is a rational $\Gamma_{E(2;2;1,1)}$-inner function.
\end{lem}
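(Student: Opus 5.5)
To prove Lemma~\ref{Lem 8}, I will check the definition of a $\Gamma_{E(2;2;1,1)}$-inner function from \cite{Alsalhi}. A holomorphic map $\phi=(\phi_1,\phi_2,\phi_3):\mathbb D\to\Gamma_{E(2;2;1,1)}$ is inner if its radial limits lie in the distinguished boundary $b\Gamma_{E(2;2;1,1)}$ almost everywhere on $\mathbb T$. By \cite[Theorem 7.1]{Abouhajar}, this boundary is
\[
\{(y_1,y_2,y_3)\in\mathbb C^3: y_1=\overline{y_2}y_3,\ |y_3|=1,\ |y_2|\leq 1\}.
\]
Fix $1\leq i\leq 6$ and set $\phi=(x_i,x_{7-i},x_7)$. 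Two things need to be shown: that $\phi$ maps $\mathbb D$ into $\Gamma_{E(2;2;1,1)}$, and that its boundary values lie in $b\Gamma_{E(2;2;1,1)}$. Rationality and holomorphy of $\phi$ are inherited from $\mathbf{x}$.

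For the first point, I will use the result \cite[Theorem 2.29]{apal1}, already used in the proof of Proposition~\ref{Prop K}. It says that $\mathbf{y}\in\Gamma_{E(3;3;1,1,1)}$ implies $(y_i,y_{7-i},y_7)\in\Gamma_{E(2;2;1,1)}$. Applying it pointwise to $\mathbf{x}(\lambda)$, $\lambda\in\mathbb D$, gives $\phi(\mathbb D)\subseteq\Gamma_{E(2;2;1,1)}$.

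For the boundary behaviour, note that the coordinates of $\mathbf{x}$ have no poles in $\overline{\mathbb D}$, so $\phi$ extends continuously to $\overline{\mathbb D}$ and its radial limits are just its values on $\mathbb T$. By Lemma~\ref{Lem 1}, for almost every $z\in\mathbb T$,
\[
x_i(z)=\overline{x_{7-i}(z)}\,x_7(z),\qquad |x_{7-i}(z)|\leq 1,\qquad |x_7(z)|=1.
\]
These are exactly the defining conditions of $b\Gamma_{E(2;2;1,1)}$, so $\phi(z)\in b\Gamma_{E(2;2;1,1)}$ for almost every $z\in\mathbb T$.

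Alternatively, one can argue as in Proposition~\ref{Prop K}. Since $\mathbf{x}(z)\in K\subseteq\Gamma_{E(3;3;1,1,1)}$, Theorem 2.29 gives $\phi(z)\in\Gamma_{E(2;2;1,1)}$, and $|x_7(z)|=1$ then places $\phi(z)$ in $b\Gamma_{E(2;2;1,1)}$ by \cite[Theorem 7.1]{Abouhajar}.

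The only step with any content is the inclusion $\phi(\mathbb D)\subseteq\Gamma_{E(2;2;1,1)}$, and it rests entirely on \cite[Theorem 2.29]{apal1}. If that theorem were only available on the boundary $\mathbb T$, I would instead use the characterization of $\Gamma_{E(3;3;1,1,1)}$ through $\|\Psi^{(j)}(\cdot,\mathbf{x})\|_{H^\infty(\mathbb D^2)}\leq1$. Specializing one variable to $0$ reduces this to the corresponding one-variable criterion for the tetrablock. Everything else is bookkeeping with Lemma~\ref{Lem 1}.
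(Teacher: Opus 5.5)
Your argument is correct and is the one the paper has in mind when it calls this lemma an immediate consequence of Lemma~\ref{Lem 1}. The interior inclusion comes from applying \cite[Theorem 2.29]{apal1} pointwise, as in Proposition~\ref{Prop K}, and the boundary values lie in $b\Gamma_{E(2;2;1,1)}$ by Lemma~\ref{Lem 1} together with \cite[Theorem 7.1]{Abouhajar}. One caution: the fallback in your last paragraph would not work as described. Setting one variable of $\Psi^{(j)}$ to $0$ only gives tetrablock conditions for triples such as $(x_1,x_2,x_3)$ or $(x_1,x_4,x_5)$, none of which involves $x_7$, so it cannot replace \cite[Theorem 2.29]{apal1}; fortunately you do not need it.
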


\begin{lem}\label{Lem 10}
Let
\(
\widetilde{\mathbf{x}}
=(x_1,x_2,x_3,y_1,y_2)
\)
be a rational $\Gamma_{E(3;2;1,2)}$-inner function. Then
\(
(x_1,y_2,x_3)\) and \(
\left(\frac{x_2}{2},\frac{y_1}{2},x_3\right)
\)
are rational $\Gamma_{E(2;2;1,1)}$-inner functions.
\end{lem}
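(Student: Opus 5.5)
We must show that $(x_1,y_2,x_3)$ and $(x_2/2,y_1/2,x_3)$ are rational $\Gamma_{E(2;2;1,1)}$-inner functions. Following \cite{Alsalhi}, this means two things: each map sends $\mathbb D$ holomorphically into $\Gamma_{E(2;2;1,1)}$, and its boundary values lie a.e.\ on the distinguished boundary
\[
b\Gamma_{E(2;2;1,1)}=\{(a,b,p):\ a=\overline b\,p,\ |p|=1,\ |a|\le 1\}
\]
(\cite[Theorem 7.1]{Abouhajar}). Rationality is inherited from $\widetilde{\mathbf{x}}$, so the work splits into an interior step and a boundary step.

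\emph{Interior step.} I would use the known fact from \cite{Bharali} relating $\Gamma_{E(3;2;1,2)}$ to the tetrablock. For $(x_1,x_2,x_3,y_1,y_2)\in\Gamma_{E(3;2;1,2)}$ it gives $(x_1,y_2,x_3)\in\Gamma_{E(2;2;1,1)}$ and $(x_2/2,y_1/2,x_3)\in\Gamma_{E(2;2;1,1)}$. The factor $1/2$ reflects that $x_2$ and $y_1$ are sums of two minors, respectively two diagonal entries. This is the analogue of \cite[Theorem 2.29]{apal1}, which the paper used in Proposition~\ref{Prop K}. If this fact is not available verbatim, I would derive it from the characterization \cite[Theorem 3.5]{Bharali} of $G_{E(3;2;1,2)}$. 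That characterization says $\sup_{z\in\mathbb D}|\Psi_3(z,\widetilde{\mathbf{x}})|\le 1$, and I would compare it with the tetrablock criterion $|x_1-\overline{y_2}x_3|+|y_2-\overline{x_1}x_3|\le 1-|x_3|^2$, or the corresponding $\Psi$-function criterion. I expect this comparison to be the main obstacle. I would try to match the roots of the quadratic $y_2z^2-y_1z+1$ with a symmetrized-bidisc pair, using the fact that $(y_1,y_2)\in\Gamma_{E(2;1;2)}$ forces $|y_1/2|\le1$. Once the pointwise fact holds on the closed domain, composing with $\widetilde{\mathbf{x}}$ gives holomorphic maps into $\Gamma_{E(2;2;1,1)}$.

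\emph{Boundary step.} By Lemma~\ref{Lem 3}, for a.e.\ $z\in\mathbb T$ we have
\[
x_1(z)=\overline{y_2(z)}x_3(z),\qquad x_2(z)=\overline{y_1(z)}x_3(z),\qquad |x_3(z)|=1.
\]
Dividing the second relation by $2$ gives $x_2/2=\overline{(y_1/2)}\,x_3$ a.e.\ on $\mathbb T$. By the interior step and continuity up to $\overline{\mathbb D}$ (the functions are rational without poles in $\overline{\mathbb D}$), the boundary values lie in $\Gamma_{E(2;2;1,1)}$, so in particular $|x_1|\le1$ and $|x_2/2|\le1$. Alternatively, Proposition~\ref{Prop K} transposed to the tetrablock applies directly: a point of $\Gamma_{E(2;2;1,1)}$ whose last coordinate is unimodular lies in $b\Gamma_{E(2;2;1,1)}$. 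Hence both triples take values a.e.\ in $b\Gamma_{E(2;2;1,1)}$ on $\mathbb T$, and they are rational $\Gamma_{E(2;2;1,1)}$-inner functions.
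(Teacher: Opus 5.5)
Your proposal is correct and is essentially the argument the paper has in mind when it calls this lemma an immediate consequence of Lemma~\ref{Lem 3}. First, both triples map $\mathbb D$ into $\Gamma_{E(2;2;1,1)}$ by the known relation between $\Gamma_{E(3;2;1,2)}$ and the tetrablock. Second, their boundary values lie in $b\Gamma_{E(2;2;1,1)}$ because $|x_3|=1$ a.e.\ on $\mathbb T$, via \cite[Theorem 7.1]{Abouhajar}, or directly from $x_1=\overline{y_2}x_3$ and $\tfrac{x_2}{2}=\overline{(\tfrac{y_1}{2})}\,x_3$.

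The interior step you flag as the main obstacle needs no root-matching. The proof of Theorem~\ref{Thm 5} uses the embedding $(x_1,\tfrac{y_1}{2},\tfrac{x_2}{2},\tfrac{y_1}{2},\tfrac{x_2}{2},y_2,x_3)\in\Gamma_{E(3;3;1,1,1)}$. Applying \cite[Theorem 2.29]{apal1} to it with $i=1$ and $i=3$ gives exactly $(x_1,y_2,x_3)\in\Gamma_{E(2;2;1,1)}$ and $(\tfrac{x_2}{2},\tfrac{y_1}{2},x_3)\in\Gamma_{E(2;2;1,1)}$.
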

The following lemma provides an invariance property of rational
$\Gamma_{E(2;2;1,1)}$-inner functions.

\begin{lem}\label{Lem 7}
Let
\(
\mathbf{h}=(h_1,h_2,h_3)
\)
be a triple of rational functions defined on $\mathbb{D}$. Then
$\mathbf{h}$ is a rational $\Gamma_{E(2;2;1,1)}$-inner function if and
only if, for every $\omega\in\mathbb{T}$,
\(
(\omega h_1,\omega h_2,\omega^2h_3)
\)
is a rational $\Gamma_{E(2;2;1,1)}$-inner function.
\end{lem}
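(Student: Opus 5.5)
The plan is to prove the forward implication directly and obtain the converse by applying it with $\overline{\omega}$ in place of $\omega$. The key point is that, for each fixed $\omega\in\mathbb T$, the map
\[
\Phi_\omega(a,b,c)=(\omega a,\omega b,\omega^2c)
\]
is a bijection of $\Gamma_{E(2;2;1,1)}$ onto itself. It also maps the distinguished boundary $b\Gamma_{E(2;2;1,1)}$ onto itself. Since $\Phi_{\overline\omega}\circ\Phi_\omega=\operatorname{id}$, it suffices to show that $\Phi_\omega$ maps each set into itself.

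For $\Gamma_{E(2;2;1,1)}$ I would use its definition. If $(a,b,c)=(a_{11},a_{22},\det A)$ with $\|A\|\le 1$, then $\omega A$ is again a contraction. Its diagonal entries are $\omega a_{11}$ and $\omega a_{22}$, and $\det(\omega A)=\omega^2\det A$. Hence $\Phi_\omega(a,b,c)\in\Gamma_{E(2;2;1,1)}$.

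For the distinguished boundary I would use the characterization already invoked in the proof of Proposition~\ref{Prop K}: a point lies in $b\Gamma_{E(2;2;1,1)}$ precisely when it lies in $\Gamma_{E(2;2;1,1)}$, $|c|=1$ and $a=\overline{b}c$. Since $|\omega^2c|=|c|$, the modulus condition is preserved. The relation is also preserved:
\[
\overline{\omega b}\,\omega^2c=\omega\,\overline{b}c=\omega a.
\]

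Now let $\mathbf h$ be a rational $\Gamma_{E(2;2;1,1)}$-inner function. The triple $(\omega h_1,\omega h_2,\omega^2h_3)$ is again rational and holomorphic on $\mathbb D$, and it takes values in $\Gamma_{E(2;2;1,1)}$ by the first invariance. Rationality with no poles in $\overline{\mathbb D}$ gives continuous boundary values, and the boundary value of the new triple at $z\in\mathbb T$ is exactly $\Phi_\omega(\mathbf h(z))$. By the second invariance this lies in $b\Gamma_{E(2;2;1,1)}$ for almost every $z\in\mathbb T$. So the new triple is a rational $\Gamma_{E(2;2;1,1)}$-inner function.

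For the converse, apply the forward direction with $\overline\omega$ to the rotated triple, which recovers $\mathbf h$. In fact the hypothesis is needed only for a single $\omega$, for instance $\omega=1$, which makes the converse trivial. The only step requiring care is citing the description of $b\Gamma_{E(2;2;1,1)}$ from \cite{Abouhajar}, and that description is already used earlier in the paper.
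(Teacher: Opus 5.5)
Your proposal is correct and follows the same route as the paper: you prove the forward direction from the invariance of $\Gamma_{E(2;2;1,1)}$ and $b\Gamma_{E(2;2;1,1)}$ under $(a,b,c)\mapsto(\omega a,\omega b,\omega^2 c)$, and the converse by taking $\omega=1$. The only difference is that the paper cites Lemma 2.1 of \cite{Pal} for these invariances, whereas you verify them directly (via the contraction $\omega A$ and the description $|c|=1$, $a=\overline{b}c$ of the distinguished boundary), which also justifies the inclusion of the rotated triple's values in $\Gamma_{E(2;2;1,1)}$ that the paper only asserts.
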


\begin{proof}
Suppose that $\mathbf{h}$ is a rational
$\Gamma_{E(2;2;1,1)}$-inner function. Then
\(
(h_1(z),h_2(z),h_3(z))
\in b\Gamma_{E(2;2;1,1)}
\)
for almost every $z\in\mathbb{T}$. By \cite[Lemma 2.1]{Pal}, the
distinguished boundary $b\Gamma_{E(2;2;1,1)}$ is invariant under the
transformation
\[
(a,b,c)\longmapsto
(\omega a,\omega b,\omega^2c),
\qquad \omega\in\mathbb{T}.
\]
Consequently,
\(
(\omega h_1(z),\omega h_2(z),\omega^2h_3(z))
\in b\Gamma_{E(2;2;1,1)}
\)
for every $\omega\in\mathbb{T}$ and almost every $z\in\mathbb{T}$.
Since the transformed triple is rational and maps $\mathbb{D}$ into
$\Gamma_{E(2;2;1,1)}$, it is a rational
$\Gamma_{E(2;2;1,1)}$-inner function.

Conversely, taking $\omega=1$ gives the desired conclusion.
\end{proof}
We next establish a function-theoretic analogue of
\cite[Lemma 3.1]{apal2} for rational
$\Gamma_{E(3;3;1,1,1)}$-inner functions.

\begin{prop}\label{Prop 5}
Let
\(
\mathbf{x}=(x_1,\ldots,x_7)
\)
be a $7$-tuple of rational functions defined on $\mathbb{D}$. Then
the following statements are equivalent:
\begin{enumerate}
\item[(1)] $\mathbf{x}$ is a rational
$\Gamma_{E(3;3;1,1,1)}$-inner function.

\item[(2)] For every $\omega\in\mathbb{T}$,
\(
(\omega x_1,x_2,\omega x_3,\omega x_4,
\omega^2x_5,\omega x_6,\omega^2x_7)
\)
is a rational $\Gamma_{E(3;3;1,1,1)}$-inner function.

\item[$(2')$] For every $\omega\in\mathbb{T}$,
\(
(\omega x_1,\omega x_2,\omega^2x_3,x_4,
\omega x_5,\omega x_6,\omega^2x_7)
\)
is a rational $\Gamma_{E(3;3;1,1,1)}$-inner function.

\item[$(2'')$] For every $\omega\in\mathbb{T}$,
\(
(x_1,\omega x_2,\omega x_3,\omega x_4,
\omega x_5,\omega^2x_6,\omega^2x_7)
\)
is a rational $\Gamma_{E(3;3;1,1,1)}$-inner function.
\end{enumerate}
\end{prop}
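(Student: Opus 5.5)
The plan is to reduce everything to a pointwise invariance of $\Gamma_{E(3;3;1,1,1)}$ and of $K$ under the three torus actions, and then read off the equivalences. This is the set-level statement \cite[Lemma 3.1]{apal2}, which I take as available.

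Write $\tau_\omega^{(1)},\tau_\omega^{(2)},\tau_\omega^{(3)}$ for the linear maps of $\mathbb C^7$ appearing in $(2)$, $(2')$, $(2'')$. I will realise these at the matrix level. Take $D_1=\operatorname{diag}(\omega,1,1)$, $D_3=\operatorname{diag}(1,1,\omega)$ and $D_2=\operatorname{diag}(1,\omega,1)$. One checks directly that $\Pi_{E(3;3;1,1,1)}(D_1D_3A)$ is $\tau^{(1)}_\omega$ applied to $\Pi_{E(3;3;1,1,1)}(A)$. For instance $a_{11}\mapsto\omega a_{11}$, $a_{33}\mapsto\omega a_{33}$, $\det A_{(12)}\mapsto\omega\det A_{(12)}$, $\det A_{(13)}\mapsto\omega^2\det A_{(13)}$, $\det A_{(23)}\mapsto\omega\det A_{(23)}$ and $\det A\mapsto\omega^2\det A$. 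The other two maps arise in the same way from $D_1D_2$ and $D_2D_3$.

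Next, $\mu_{E(3;3;1,1,1)}(DA)=\mu_{E(3;3;1,1,1)}(A)$ for every diagonal unitary $D$. Indeed $X\mapsto XD$ is an isometry of $E(3;3;1,1,1)$ onto itself, and $\det(I-DAX)=\det(I-AXD)$. Hence each $\tau^{(k)}_\omega$ maps $\Gamma_{E(3;3;1,1,1)}$ onto itself, and also $G_{E(3;3;1,1,1)}$ onto itself. This is exactly the content of \cite[Lemma 3.1]{apal2}.

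Since $\tau^{(k)}_\omega$ multiplies $x_7$ by $\omega^2$, which is unimodular, Proposition~\ref{Prop K} shows that $\tau^{(k)}_\omega(K)=K$. The defining relations $x_i=\overline{x_{7-i}}x_7$ can also be checked directly: the exponents of $\omega$ on $x_i$ and $x_{7-i}$ always add up to $2$.

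With this invariance, the proof of Proposition~\ref{Prop 5} runs exactly as in Lemma~\ref{Lem 7}.

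For $(1)\Rightarrow(2)$, assume $\mathbf x$ is a rational $\Gamma_{E(3;3;1,1,1)}$-inner function. Then $\tau^{(1)}_\omega\circ\mathbf x$ is a tuple of rational functions, holomorphic on $\mathbb D$, with values in $\Gamma_{E(3;3;1,1,1)}$. Because $\tau^{(1)}_\omega$ is linear, hence continuous, its radial limits are $\tau^{(1)}_\omega$ applied to those of $\mathbf x$. Those limits lie in $K$ almost everywhere on $\mathbb T$, so the transformed tuple is again inner.

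For $(2)\Rightarrow(1)$, take $\omega=1$. The same argument gives $(1)\Leftrightarrow(2')$ and $(1)\Leftrightarrow(2'')$, so all four statements are equivalent.

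The only step with real content is the set-level invariance of $\Gamma_{E(3;3;1,1,1)}$. The matrix realisation above settles it, provided the minor bookkeeping is checked for each of the three actions. I expect that bookkeeping to be the main place where care is needed.
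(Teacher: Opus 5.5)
Your proof is correct and follows the paper's argument. Both establish pointwise invariance of $\Gamma_{E(3;3;1,1,1)}$ and $K$ under the three torus actions, compose with $\mathbf{x}$, and take $\omega=1$ for the converse. The one difference is that you prove the invariance yourself, via $\Pi_{E(3;3;1,1,1)}(DA)$ with $D$ diagonal unitary, the identity $\mu_{E(3;3;1,1,1)}(DA)=\mu_{E(3;3;1,1,1)}(A)$, and Proposition~\ref{Prop K} for $K$; the paper simply cites \cite[Lemma 3.1]{apal2} for $K$ and \cite[Lemma 2.4]{apal2} for $\Gamma_{E(3;3;1,1,1)}$.
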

\begin{proof}
We prove $(1)\Longleftrightarrow(2)$. The proofs of
$(1)\Longleftrightarrow(2')$ and
$(1)\Longleftrightarrow(2'')$ are analogous.

Suppose that $\mathbf{x}$ is a rational
$\Gamma_{E(3;3;1,1,1)}$-inner function. Then
\(
\mathbf{x}(z)\in K
\)
for almost every $z\in\mathbb{T}$. By \cite[Lemma 3.1]{apal2},
\[
\begin{aligned}
(\omega x_1(z),x_2(z),\omega x_3(z),\omega x_4(z),\omega^2x_5(z),\omega x_6(z),\omega^2x_7(z))
\in K
\end{aligned}
\]
for every $\omega\in\mathbb{T}$ and almost every $z\in\mathbb{T}$.
Moreover, by \cite[Lemma 2.4]{apal2},
\[
\begin{aligned}
(\omega x_1(z),x_2(z),\omega x_3(z),\omega x_4(z),\omega^2x_5(z),\omega x_6(z),\omega^2x_7(z))
\in \Gamma_{E(3;3;1,1,1)}
\end{aligned}
\]
for every $z\in\overline{\mathbb{D}}$ and every
$\omega\in\mathbb{T}$. Hence
\(
(\omega x_1,x_2,\omega x_3,\omega x_4,
\omega^2x_5,\omega x_6,\omega^2x_7)
\)
is a rational $\Gamma_{E(3;3;1,1,1)}$-inner function.

The converse follows immediately by taking $\omega=1$.
\end{proof}
The corresponding invariance property for rational
$\Gamma_{E(3;2;1,2)}$-inner functions is given below.
\begin{prop}\label{Prop 6}
Let
\(
\widetilde{\mathbf{x}}
=(x_1,x_2,x_3,y_1,y_2)
\)
be a $5$-tuple of rational functions defined on $\mathbb{D}$. Then
$\widetilde{\mathbf{x}}$ is a rational
$\Gamma_{E(3;2;1,2)}$-inner function if and only if, for every
$\omega\in\mathbb{T}$,
\(
(x_1,\omega x_2,\omega^2x_3,\omega y_1,\omega^2y_2)
\)
is a rational $\Gamma_{E(3;2;1,2)}$-inner function.
\end{prop}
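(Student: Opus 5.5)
The plan mirrors the proof of Proposition~\ref{Prop 5}. There are two inclusions to check: the rotated tuple maps $\mathbb D$ into $\Gamma_{E(3;2;1,2)}$, and its boundary values lie in $K_1$. Both will follow from a pointwise invariance statement: for every $\omega\in\mathbb T$ the map
\[
\tau_\omega(x_1,x_2,x_3,y_1,y_2)=(x_1,\omega x_2,\omega^2x_3,\omega y_1,\omega^2y_2)
\]
preserves $\Gamma_{E(3;2;1,2)}$ and preserves $K_1$.

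For the first, I would argue at the matrix level. Take $\widetilde{\mathbf x}=\Pi_{E(3;2;1,2)}(A)$ with $\mu_{E(3;2;1,2)}(A)\le1$, and set $U=\operatorname{diag}(1,\omega,\omega)$ and $B=UA$.
\begin{itemize}
\item Multiplying rows $2,3$ by $\omega$ leaves $a_{11}$ unchanged.
\item It multiplies $a_{22}+a_{33}$ and $\det A_{(12)}+\det A_{(13)}$ by $\omega$.
\item It multiplies $\det A_{(23)}$ by $\omega^2$, and $\det A$ by $\omega^2$.
\end{itemize}
Hence $\Pi_{E(3;2;1,2)}(B)=(x_1,\omega x_2,\omega^2x_3,\omega y_1,\omega^2y_2)$, which is exactly $\tau_\omega(\widetilde{\mathbf x})$ in the ordering used in the definition of $\Gamma_{E(3;2;1,2)}$.

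Moreover, $U$ commutes with every $X\in E(3;2;1,2)$, and $UX$ again lies in $E(3;2;1,2)$ with $\|UX\|=\|X\|$. Therefore $\det(I-BX)=\det(I-A(XU))$, so $\mu_{E(3;2;1,2)}(B)=\mu_{E(3;2;1,2)}(A)\le1$. This shows $\tau_\omega(\Gamma_{E(3;2;1,2)})\subseteq\Gamma_{E(3;2;1,2)}$. (Closedness of $\Gamma$ is not needed, since the definition is via closed $\mu$-balls.)

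For the second inclusion I would use Proposition~\ref{Prop K_1}. A point of $K_1$ is a point of $\Gamma_{E(3;2;1,2)}$ with $|x_3|=1$. Its image under $\tau_\omega$ lies in $\Gamma_{E(3;2;1,2)}$ by the previous step and has $|\omega^2x_3|=1$, so it lies in $K_1$. Alternatively, one can check the defining relations directly: $x_1=\overline{\omega^2y_2}\,\omega^2x_3$ and $\omega x_2=\overline{\omega y_1}\,\omega^2x_3$.

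The function statement then follows. The rotated tuple is rational and holomorphic on $\mathbb D$, and its values lie in $\Gamma_{E(3;2;1,2)}$. Its radial limits are $\tau_\omega$ applied to the radial limits of $\widetilde{\mathbf x}$, so they lie in $K_1$ almost everywhere. For the converse, take $\omega=1$, or apply the forward direction with $\overline\omega$ and use $\tau_{\overline\omega}\circ\tau_\omega=\mathrm{id}$.

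The main obstacle is the determinant bookkeeping: verifying that $\det B_{(12)}+\det B_{(13)}=\omega(\det A_{(12)}+\det A_{(13)})$ and $\det B_{(23)}=\omega^2\det A_{(23)}$, and keeping the coordinate ordering straight. A second point is confirming that $\mu_{E}$ is invariant under left multiplication by a unitary commuting with $E$. If a direct citation of an invariance lemma for $\Gamma_{E(3;2;1,2)}$ from \cite{Bharali} is available, I would use it instead of the matrix computation.
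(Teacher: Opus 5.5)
Your proof is correct and has the same structure as the paper's. Both show that $(x_1,x_2,x_3,y_1,y_2)\mapsto(x_1,\omega x_2,\omega^2x_3,\omega y_1,\omega^2y_2)$ preserves $\Gamma_{E(3;2;1,2)}$ and $K_1$ pointwise, pass to radial limits, and take $\omega=1$ for the converse; the one difference is that the paper cites \cite[Proposition 3.6]{apal2} and \cite[Lemma 2.8]{apal2} for these invariances, whereas you derive them from the definition via $\Pi_{E(3;2;1,2)}(\operatorname{diag}(1,\omega,\omega)A)$ and the identity $\det(I-UAX)=\det(I-AXU)$ with $XU\in E(3;2;1,2)$, which makes your argument self-contained.
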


\begin{proof}
Suppose that $\widetilde{\mathbf{x}}$ is a rational
$\Gamma_{E(3;2;1,2)}$-inner function. Then
\[
\widetilde{\mathbf{x}}(z)
=(x_1(z),x_2(z),x_3(z),y_1(z),y_2(z))
\in K_1
\]
for almost every $z\in\mathbb{T}$. By
\cite[Proposition 3.6]{apal2},
\[
(x_1(z),\omega x_2(z),\omega^2x_3(z),
\omega y_1(z),\omega^2y_2(z))
\in K_1
\]
for every $\omega\in\mathbb{T}$ and almost every $z\in\mathbb{T}$.
On the other hand, by \cite[Lemma 2.8]{apal2},
\[
(x_1(z),\omega x_2(z),\omega^2x_3(z),
\omega y_1(z),\omega^2y_2(z))
\in \Gamma_{E(3;2;1,2)}
\]
for every $z\in\overline{\mathbb{D}}$ and every
$\omega\in\mathbb{T}$. Therefore,
\(
(x_1,\omega x_2,\omega^2x_3,\omega y_1,\omega^2y_2)
\)
is a rational $\Gamma_{E(3;2;1,2)}$-inner function.

The converse follows by taking $\omega=1$.
\end{proof}

For a $7$-tuple
$\mathbf{x}=(x_1,\ldots,x_7)$ of rational functions, define
\[
\begin{aligned}
\mathcal{A}_1(z_1)
&=
\left(
\frac{x_2-z_1x_3}{1-z_1x_1},
\frac{x_4-z_1x_5}{1-z_1x_1},
\frac{x_6-z_1x_7}{1-z_1x_1}
\right),\\
\mathcal{A}_2(z_2)
&=
\left(
\frac{x_1-z_2x_3}{1-z_2x_2},
\frac{x_4-z_2x_6}{1-z_2x_2},
\frac{x_5-z_2x_7}{1-z_2x_2}
\right),\\
\mathcal{A}_3(z_3)
&=
\left(
\frac{x_1-z_3x_5}{1-z_3x_4},
\frac{x_2-z_3x_6}{1-z_3x_4},
\frac{x_3-z_3x_7}{1-z_3x_4}
\right).
\end{aligned}
\]
The following theorem is a function-theoretic analogue of
\cite[Theorem 3.14]{apal1} and \cite[Theorem 3.2]{apal2}.

\begin{thm}\label{Thm 1}
Let
\(
\mathbf{x}=(x_1,\ldots,x_7)
\)
be a $7$-tuple of rational functions defined on $\mathbb{D}$. Then
the following statements are equivalent:

\begin{enumerate}
\item[(1)] $\mathbf{x}$ is a rational
$\Gamma_{E(3;3;1,1,1)}$-inner function.

\item[(2)] $\mathbf{x}(\mathbb{D})\subseteq
\Gamma_{E(3;3;1,1,1)}$ and $x_7$ is inner.

\item[(3)] $\mathbf{x}(\mathbb{D})\subseteq
\Gamma_{E(3;3;1,1,1)}$ and at least one of the following conditions
holds:
\begin{enumerate}
\item[(a)] If
\(
|x_2(z)|<1, |x_4(z)|<1
\)
for almost every $z\in\mathbb{T}$, then
$\mathcal{A}_2(z_2)$ and $\mathcal{A}_3(z_3)$ are rational
$\Gamma_{E(2;2;1,1)}$-inner functions for every
$z_2,z_3\in\mathbb{T}$.

\item[(b)] If
\(
|x_2(z)|=1
\)
for almost every $z\in\mathbb{T}$, then
$\mathcal{A}_2(z_2)$ is a rational
$\Gamma_{E(2;2;1,1)}$-inner function for every
$z_2\in\mathbb{D}$.

\item[(c)] If
\(
|x_4(z)|=1
\)
for almost every $z\in\mathbb{T}$, then
$\mathcal{A}_3(z_3)$ is a rational
$\Gamma_{E(2;2;1,1)}$-inner function for every
$z_3\in\mathbb{D}$.
\end{enumerate}

\item[(4)] $\mathbf{x}(\mathbb{D})\subseteq
\Gamma_{E(3;3;1,1,1)}$ and at least one of the following conditions
holds:
\begin{enumerate}
\item[(a)] If
\(
|x_1(z)|<1, |x_2(z)|<1
\)
for almost every $z\in\mathbb{T}$, then
$\mathcal{A}_1(z_1)$ and $\mathcal{A}_2(z_2)$ are rational
$\Gamma_{E(2;2;1,1)}$-inner functions for every
$z_1,z_2\in\mathbb{T}$.

\item[(b)] If
\(
|x_1(z)|=1
\)
for almost every $z\in\mathbb{T}$, then
$\mathcal{A}_1(z_1)$ is a rational
$\Gamma_{E(2;2;1,1)}$-inner function for every
$z_1\in\mathbb{D}$.

\item[(c)] If
\(
|x_2(z)|=1
\)
for almost every $z\in\mathbb{T}$, then
$\mathcal{A}_2(z_2)$ is a rational
$\Gamma_{E(2;2;1,1)}$-inner function for every
$z_2\in\mathbb{D}$.
\end{enumerate}

\item[(5)] $\mathbf{x}(\mathbb{D})\subseteq
\Gamma_{E(3;3;1,1,1)}$ and at least one of the following conditions
holds:
\begin{enumerate}
\item[(a)] If
\(
|x_1(z)|<1,|x_4(z)|<1
\)
for almost every $z\in\mathbb{T}$, then
$\mathcal{A}_1(z_1)$ and $\mathcal{A}_3(z_3)$ are rational
$\Gamma_{E(2;2;1,1)}$-inner functions for every
$z_1,z_3\in\mathbb{T}$.

\item[(b)] If
\(
|x_1(z)|=1
\)
for almost every $z\in\mathbb{T}$, then
$\mathcal{A}_1(z_1)$ is a rational
$\Gamma_{E(2;2;1,1)}$-inner function for every
$z_1\in\mathbb{D}$.

\item[(c)] If
\(
|x_4(z)|=1
\)
for almost every $z\in\mathbb{T}$, then
$\mathcal{A}_3(z_3)$ is a rational
$\Gamma_{E(2;2;1,1)}$-inner function for every
$z_3\in\mathbb{D}$.
\end{enumerate}
\end{enumerate}
\end{thm}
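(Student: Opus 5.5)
The plan is to prove $(1)\Leftrightarrow(2)$ first, and then $(1)\Leftrightarrow(3)$. The equivalences with (4) and (5) follow from (3) by permuting the three diagonal coordinates. This permutation is a symmetry of $\Gamma_{E(3;3;1,1,1)}$ and of $K$, since conjugating $A$ by a permutation matrix preserves $\mu_{E(3;3;1,1,1)}$ and permutes the principal minors.

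\emph{$(1)\Rightarrow(2)$.} This is Lemma~\ref{Lem 1}(2), together with the fact that an inner function maps $\mathbb D$ into $\Gamma_{E(3;3;1,1,1)}$ by definition.

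\emph{$(2)\Rightarrow(1)$.} Since each $x_i$ is rational and bounded on $\mathbb D$, it has no poles on $\overline{\mathbb D}$, so $\mathbf{x}$ extends continuously to $\overline{\mathbb D}$. Closedness of $\Gamma_{E(3;3;1,1,1)}$ then gives $\mathbf{x}(z)\in\Gamma_{E(3;3;1,1,1)}$ for $z\in\mathbb T$. Because $x_7$ is inner, $|x_7(z)|=1$ on $\mathbb T$. Proposition~\ref{Prop K} therefore gives $\mathbf{x}(z)\in K$, so $\mathbf{x}$ is $\Gamma_{E(3;3;1,1,1)}$-inner.

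\emph{$(1)\Rightarrow(3)$.} Fix $z\in\mathbb T$, so that $\mathbf{x}(z)\in K$ and $x_i=\overline{x_{7-i}}x_7$ there. For $z_2$ in the relevant range, I compute the boundary values of $\mathcal A_2(z_2)$. Its third coordinate is
\[
\frac{x_5-z_2x_7}{1-z_2x_2}=x_7\,\frac{\overline{x_2}-z_2}{1-z_2x_2}.
\]
For $z_2\in\mathbb T$ with $1-z_2x_2\neq0$ this has modulus one. For $z_2\in\mathbb D$ with $|x_2|=1$ it is $-\overline{z_2}\,\overline{x_2}x_7\cdot\frac{1-\overline{z_2}x_2}{1-z_2x_2}\cdot(\ldots)$, again unimodular. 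I also check the relation $(\mathcal A_2)_1=\overline{(\mathcal A_2)_2}(\mathcal A_2)_3$, which is a direct algebraic manipulation using $x_1=\overline{x_6}x_7$, $x_3=\overline{x_4}x_7$ and their counterparts. It remains to know that $\mathcal A_2(z_2)$ maps $\mathbb D$ into $\Gamma_{E(2;2;1,1)}$. For this I invoke the pointwise characterization in \cite[Theorem 3.14]{apal1}: for $\mathbf{y}\in\Gamma_{E(3;3;1,1,1)}$ with $|y_2|<1$ and $z_2\in\overline{\mathbb D}$, the triple built from $\mathbf{y}$ as in $\mathcal A_2$ lies in $\Gamma_{E(2;2;1,1)}$. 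In the degenerate cases $|x_2|=1$ one restricts to $z_2\in\mathbb D$ so that the denominator does not vanish. With the characterization of $b\Gamma_{E(2;2;1,1)}$ (unimodular determinant together with the conjugate-symmetry relation, as in the proof of Proposition~\ref{Prop K}), this shows $\mathcal A_2(z_2)$ is a rational $\Gamma_{E(2;2;1,1)}$-inner function. The case of $\mathcal A_3$ is the same.

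\emph{$(3)\Rightarrow(1)$.} Using $(1)\Leftrightarrow(2)$, it suffices to show $|x_7|=1$ a.e.\ on $\mathbb T$. By \cite[Theorem 3.14]{apal1} / \cite[Theorem 3.2]{apal2}, the point $\mathbf{x}(z)$ lies in $K$ exactly when the associated $\mathcal A_2(z_2)$, $\mathcal A_3(z_3)$ lie in $b\Gamma_{E(2;2;1,1)}$ for the stated parameters. Applying this pointwise at a.e.\ $z\in\mathbb T$, using the inner hypothesis on the $\mathcal A_j$, gives $\mathbf{x}(z)\in K$. One could also argue directly: unimodularity of $(x_5-z_2x_7)/(1-z_2x_2)$ for all $z_2\in\mathbb T$ forces $x_5=\overline{x_2}x_7$ and $|x_7|=1$ when $|x_2|<1$. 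This follows by comparing $|x_5-z_2x_7|^2=|1-z_2x_2|^2$ as trigonometric polynomials in $z_2$.

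The main obstacle I expect is the case split in (3). One must carefully justify that the pointwise results from \cite{apal1,apal2} apply on a full-measure subset of $\mathbb T$ uniformly in the parameter $z_2$ or $z_3$. One must also check that the denominators $1-z_2x_2$ vanish only on a null set, which holds since they are nonzero rational functions in the non-degenerate case.
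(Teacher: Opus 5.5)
Your main line is the paper's own proof. You get $(1)\Leftrightarrow(2)$ from Proposition~\ref{Prop K}. You get $(1)\Rightarrow(3)$ by showing the third coordinate of $\mathcal A_2(z_2)$ (resp.\ $\mathcal A_3(z_3)$) is unimodular on $\mathbb T$ and combining this with pointwise membership in $\Gamma_{E(2;2;1,1)}$. You get $(3)\Rightarrow(1)$ by applying \cite[Theorem 3.14]{apal1} pointwise on $\mathbb T$. That part is fine, and your continuity remark in $(2)\Rightarrow(1)$ and your attention to uniformity in $z_2$ are more careful than the paper.

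However, the ``direct'' alternative you sketch for $(3)\Rightarrow(1)$ is wrong as stated. Comparing $|x_5-z_2x_7|^2=|1-z_2x_2|^2$ as trigonometric polynomials in $z_2\in\mathbb T$ gives
\[
x_2=\overline{x_5}\,x_7
\quad\text{and}\quad
|x_5|^2+|x_7|^2=1+|x_2|^2,
\quad\text{i.e.}\quad
(1-|x_5|^2)(1-|x_7|^2)=0,
\]
which does not force $|x_7|=1$. Also, the relation obtained is $x_2=\overline{x_5}x_7$, not $x_5=\overline{x_2}x_7$; the two coincide only once $|x_7|=1$ is known.

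Here is a concrete counterexample. Let $\varphi$ be a finite Blaschke product and take
\[
\mathbf x=\Pi_{E(3;3;1,1,1)}\circ\operatorname{diag}(1,0,\varphi)=(1,0,0,\varphi,\varphi,0,0).
\]
This map sends $\mathbb D$ into $\Gamma_{E(3;3;1,1,1)}$ and has $x_2\equiv0$. Moreover $\mathcal A_2(z_2)=(1,\varphi,\varphi)$ is a rational $\Gamma_{E(2;2;1,1)}$-inner function for every $z_2\in\mathbb T$, yet $x_7\equiv0$. So no argument using $\mathcal A_2$ alone can give innerness.

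In case (a) you must also use $\mathcal A_3$. By the same computation it gives $|x_3|=|x_5|=1$ at any boundary point where $|x_7|<1$. You then need the geometry of $\Gamma_{E(3;3;1,1,1)}$ to exclude this configuration when $|x_4|<1$. That is exactly what \cite[Theorem 3.14]{apal1} packages, so keep the citation and drop the aside.

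Minor points:
\begin{enumerate}
\item For $z_2\in\mathbb D$ and $|x_2|=1$, the quotient is simply $\overline{x_2}x_7$, since $\overline{x_2}-z_2=\overline{x_2}(1-z_2x_2)$. Your displayed expression with ``$(\ldots)$'' should be replaced by this.
\item Your separate verification of $(\mathcal A_2)_1=\overline{(\mathcal A_2)_2}(\mathcal A_2)_3$ is unnecessary. Membership in $\Gamma_{E(2;2;1,1)}$ plus a unimodular third coordinate already gives $b\Gamma_{E(2;2;1,1)}$ by \cite[Theorem 7.1]{Abouhajar}, as the paper does. In the degenerate case $|x_2|=1$ that identity does not follow from the $K$-relations alone. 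It needs $x_3=x_1x_2$, which comes from $(x_1,x_2,x_3)\in\Gamma_{E(2;2;1,1)}$ with $|x_2|=1$.
\item The paper takes the pointwise membership of $\mathcal A_2(z_2)$ in $\Gamma_{E(2;2;1,1)}$ from \cite[Theorems 2.22, 2.29]{apal1}. It reserves Theorem 3.14 for the characterization of $K$.
\end{enumerate}
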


\begin{proof}
We prove
\(
(1)\Longleftrightarrow(2)\Longleftrightarrow(3).
\)
The proofs of
\(
(1)\Longleftrightarrow(2)\Longleftrightarrow(4)
\)
and
\(
(1)\Longleftrightarrow(2)\Longleftrightarrow(5)
\)
are analogous, after the appropriate permutation of the coordinates.

The implication $(1)\Rightarrow(2)$ follows immediately from the
definition of a rational $\Gamma_{E(3;3;1,1,1)}$-inner function.
Conversely, suppose that
\(
\mathbf{x}(\mathbb{D})\subseteq
\Gamma_{E(3;3;1,1,1)}
\)
and that $x_7$ is inner. Since
\(
|x_7(z)|=1
\)
for almost every $z\in\mathbb{T}$, Proposition~\ref{Prop K} implies
that
\(
\mathbf{x}(z)\in K
\)
for almost every $z\in\mathbb{T}$. Hence $\mathbf{x}$ is a rational
$\Gamma_{E(3;3;1,1,1)}$-inner function. Thus
\(
(1)\Longleftrightarrow(2).
\)

We next prove $(1)\Rightarrow(3)$. Let $\mathbf{x}$ be a rational
$\Gamma_{E(3;3;1,1,1)}$-inner function. By Lemma~\ref{Lem 1},
\(
x_i(z)=\overline{x_{7-i}(z)}x_7(z),1\leq i\leq6,
\)
and
\(
|x_7(z)|=1
\)
for almost every $z\in\mathbb{T}$. Hence
\(
\mathbf{x}(z)\in K
\)
for almost every $z\in\mathbb{T}$.
We consider the following three cases.

\medskip
\noindent
\textbf{Case 1: $(1)\Rightarrow(3)(a)$:}
Suppose that
\(
|x_2(z)|<1,~|x_4(z)|<1
\)
for almost every $z\in\mathbb{T}$, and let
$z_2,z_3\in\mathbb{T}$. Then
\[
1-z_2x_2(z)\neq0,
\quad
1-z_3x_4(z)\neq0
\]
for almost every $z\in\mathbb{T}$.
By Lemma~\ref{Lem 8}, the triple
$(x_2,x_5,x_7)$ is a rational
$\Gamma_{E(2;2;1,1)}$-inner function. Therefore,
\[
\left|
\frac{x_5(z)-z_2x_7(z)}
     {1-z_2x_2(z)}
\right|=1, \quad z_2\in \mathbb T
\]
for almost every $z\in\mathbb{T}$. Moreover, by
\cite[Theorem 2.22]{apal1},
\[
\left(
\frac{x_1-z_2x_3}{1-z_2x_2},
\frac{x_4-z_2x_6}{1-z_2x_2},
\frac{x_5-z_2x_7}{1-z_2x_2}
\right)
\in\Gamma_{E(2;2;1,1)} ,\quad z_2\in \mathbb T.
\]
It follows from \cite[Theorem 7.1]{Abouhajar} that
\[
\left(
\frac{x_1(z)-z_2x_3(z)}{1-z_2x_2(z)},
\frac{x_4(z)-z_2x_6(z)}{1-z_2x_2(z)},
\frac{x_5(z)-z_2x_7(z)}{1-z_2x_2(z)}
\right)
\in b\Gamma_{E(2;2;1,1)},\quad z_2\in \mathbb T
\]
for almost every $z\in\mathbb{T}$. Hence
$\mathcal{A}_2(z_2)$ is a rational
$\Gamma_{E(2;2;1,1)}$-inner function.
Similarly, by applying Lemma~\ref{Lem 8} to the triple
$(x_4,x_3,x_7)$, we obtain that
$\mathcal{A}_3(z_3)$ is a rational
$\Gamma_{E(2;2;1,1)}$-inner function for every
$z_3\in\mathbb{T}$.

\medskip
\noindent
\textbf{Case 2: $(1)\Rightarrow(3)(b)$:}
Suppose that
\(
|x_2(z)|=1
\)
for almost every $z\in\mathbb{T}$, and let $z_2\in\mathbb{D}$. Then
\(
|z_2x_2(z)|<1,
\)
and hence
\(
1-z_2x_2(z)\neq0
\)
for almost every $z\in\mathbb{T}$. By Lemma~\ref{Lem 8},
\[
\left|
\frac{x_5(z)-z_2x_7(z)}
     {1-z_2x_2(z)}
\right|=1, \quad z_2\in \mathbb D
\]
for almost every $z\in\mathbb{T}$. Furthermore,
\cite[Theorem 2.29]{apal1} gives
\[
\left(
\frac{x_1-z_2x_3}{1-z_2x_2},
\frac{x_4-z_2x_6}{1-z_2x_2},
\frac{x_5-z_2x_7}{1-z_2x_2}
\right)
\in\Gamma_{E(2;2;1,1)}, \quad z_2\in \mathbb D.
\]
Therefore, its boundary values belong to
$b\Gamma_{E(2;2;1,1)}$ almost everywhere on $\mathbb{T}$.
Consequently, $\mathcal{A}_2(z_2)$ is a rational
$\Gamma_{E(2;2;1,1)}$-inner function.

\medskip
\noindent
\textbf{Case 3: $(1)\Rightarrow(3)(c)$:}
The proof is analogous to that of Case 2, with $x_2$ and
$\mathcal{A}_2$ replaced by $x_4$ and $\mathcal{A}_3$, respectively.
Hence $\mathcal{A}_3(z_3)$ is a rational
$\Gamma_{E(2;2;1,1)}$-inner function for every $z_3\in\mathbb{D}$.
Thus,
\(
(1)\Longrightarrow(3).
\)

It remains to prove $(3)\Rightarrow(1)$.

\medskip
\noindent
\textbf{Case 1: $(3)(a)\Rightarrow(1)$:}
Suppose that
\(
|x_2(z)|<1, |x_4(z)|<1
\)
for almost every $z\in\mathbb{T}$, and that
$\mathcal{A}_2(z_2)$ and $\mathcal{A}_3(z_3)$ are rational
$\Gamma_{E(2;2;1,1)}$-inner functions for every
$z_2,z_3\in\mathbb{T}$. Then, for almost every $z\in\mathbb{T}$,
\[
\left(
\frac{x_1(z)-z_2x_3(z)}{1-z_2x_2(z)},
\frac{x_4(z)-z_2x_6(z)}{1-z_2x_2(z)},
\frac{x_5(z)-z_2x_7(z)}{1-z_2x_2(z)}
\right)
\in b\Gamma_{E(2;2;1,1)}
\]
and
\[
\left(
\frac{x_1(z)-z_3x_5(z)}{1-z_3x_4(z)},
\frac{x_2(z)-z_3x_6(z)}{1-z_3x_4(z)},
\frac{x_3(z)-z_3x_7(z)}{1-z_3x_4(z)}
\right)
\in b\Gamma_{E(2;2;1,1)}.
\]
Therefore, by \cite[Theorem 3.14]{apal1},
\(
\mathbf{x}(z)\in K
\)
for almost every $z\in\mathbb{T}$. Since
\(
\mathbf{x}(\mathbb{D})\subseteq
\Gamma_{E(3;3;1,1,1)},
\)
we conclude that $\mathbf{x}$ is a rational
$\Gamma_{E(3;3;1,1,1)}$-inner function.

\medskip
\noindent
\textbf{Case 2: $(3)(b)\Rightarrow(1)$.}
Suppose that
\(
|x_2(z)|=1
\)
for almost every $z\in\mathbb{T}$, and that
$\mathcal{A}_2(z_2)$ is a rational
$\Gamma_{E(2;2;1,1)}$-inner function for every $z_2\in\mathbb{D}$.
Then
\[
\left(
\frac{x_1(z)-z_2x_3(z)}{1-z_2x_2(z)},
\frac{x_4(z)-z_2x_6(z)}{1-z_2x_2(z)},
\frac{x_5(z)-z_2x_7(z)}{1-z_2x_2(z)}
\right)
\in b\Gamma_{E(2;2;1,1)}
\]
for almost every $z\in\mathbb{T}$. Hence, by
\cite[Theorem 3.14]{apal1},
\(
\mathbf{x}(z)\in K
\)
for almost every $z\in\mathbb{T}$. Consequently,
$\mathbf{x}$ is a rational $\Gamma_{E(3;3;1,1,1)}$-inner function.

\medskip
\noindent
\textbf{Case 3: $(3)(c)\Rightarrow(1)$.}
The proof is analogous to that of Case 5, with $x_2$ and
$\mathcal{A}_2$ replaced by $x_4$ and $\mathcal{A}_3$, respectively.
Thus,
\(
(1)\Longleftrightarrow(2)\Longleftrightarrow(3).
\)
This completes
the proof.
\end{proof}
Next, we introduce the following rational map:
\begin{equation*}
\mathcal{B}(w)
=
\left(
\frac{y_1-2wy_2}{2-y_1w},
\frac{2x_1-wx_2}{2-y_1w},
\frac{x_2-2wx_3}{2-y_1w}
\right). 
\end{equation*}
The following theorem provides an analogue of Theorem~\ref{Thm 1}
for rational $\Gamma_{E(3;2;1,2)}$-inner functions. Its proof follows
from the same arguments used in the proof of Theorem~\ref{Thm 1}, and
we therefore omit the details.

\begin{thm}\label{Thm 2}
Let
\(
\widetilde{\mathbf{x}}
=(x_1,x_2,x_3,y_1,y_2)
\)
be a $5$-tuple of rational functions defined on $\mathbb{D}$. Then
the following statements are equivalent:
\begin{enumerate}
\item[(1)] $\widetilde{\mathbf{x}}$ is a rational
$\Gamma_{E(3;2;1,2)}$-inner function.

\item[(2)] $\widetilde{\mathbf{x}}(\mathbb{D})\subseteq
\Gamma_{E(3;2;1,2)}$ and $x_3$ is inner.

\item[(3)] $\widetilde{\mathbf{x}}(\mathbb{D})\subseteq
\Gamma_{E(3;2;1,2)}$ and at least one of the following conditions
holds:
\begin{enumerate}
\item[(a)] If
\(
|y_1(z)|<2
\)
for almost every $z\in\mathbb{T}$, then, for every $w\in\mathbb{T}$,
$\mathcal{B}(w)$ is a rational
$\Gamma_{E(2;2;1,1)}$-inner function.

\item[(b)] If
\(
|y_1(z)|=2
\)
for almost every $z\in\mathbb{T}$, then, for every $w\in\mathbb{D}$,
$\mathcal{B}(w)$ is a rational
$\Gamma_{E(2;2;1,1)}$-inner function.
\end{enumerate}
\end{enumerate}
\end{thm}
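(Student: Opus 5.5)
We mirror the proof of Theorem~\ref{Thm 1} and establish $(1)\Leftrightarrow(2)\Leftrightarrow(3)$.

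\textbf{Step 1: $(1)\Leftrightarrow(2)$.} The implication $(1)\Rightarrow(2)$ is immediate: Lemma~\ref{Lem 3} shows that $x_3$ is inner, and $\widetilde{\mathbf{x}}(\mathbb D)\subseteq\Gamma_{E(3;2;1,2)}$ holds by definition. For the converse, suppose $\widetilde{\mathbf{x}}(\mathbb D)\subseteq\Gamma_{E(3;2;1,2)}$ and $x_3$ is inner. The coordinates are rational and bounded, so they extend continuously to $\overline{\mathbb D}$. Since $\Gamma_{E(3;2;1,2)}$ is closed, $\widetilde{\mathbf{x}}(z)\in\Gamma_{E(3;2;1,2)}$ for $z\in\mathbb T$, and $|x_3(z)|=1$ there. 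Proposition~\ref{Prop K_1} then gives $\widetilde{\mathbf{x}}(z)\in K_1$ almost everywhere on $\mathbb T$.

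\textbf{Step 2: $(1)\Rightarrow(3)$.} Let $\widetilde{\mathbf{x}}$ be inner. In case (a), fix $w\in\mathbb T$; in case (b), fix $w\in\mathbb D$. In either case $2-y_1(z)w\neq0$ for almost every $z\in\mathbb T$, so $\mathcal B(w)$ is rational.

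We first show that $\mathcal B(w)$ maps into $\Gamma_{E(2;2;1,1)}$. For this we invoke the point-level result from \cite{Bharali} or \cite{apal2}, the analogue of \cite[Theorem 2.22/2.29]{apal1}: $(x_1,x_2,x_3,y_1,y_2)\in\Gamma_{E(3;2;1,2)}$ if and only if the triple $\mathcal B(w)$, evaluated at that point, lies in $\Gamma_{E(2;2;1,1)}$ for the relevant $w$.

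Next we show that the boundary values lie in $b\Gamma_{E(2;2;1,1)}$. By Lemma~\ref{Lem 4}, for $z\in\mathbb T$ we have $x_1=\bar y_2x_3$ and $x_2=\bar y_1x_3$ with $|x_3|=1$. The third coordinate of $\mathcal B(w)$ is therefore
\[
\frac{x_2-2wx_3}{2-y_1w}=x_3\,\frac{\bar y_1-2w}{2-y_1w}.
\]
For $|w|=1$ this has modulus $1$, since $|\bar y_1-2w|=|2-y_1w|$ when $|w|=1$. By \cite[Theorem 7.1]{Abouhajar}, the boundary point then lies in $b\Gamma_{E(2;2;1,1)}$.

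In case (b), $|y_1|=2$ on $\mathbb T$, and $|\bar y_1-2w|=|2-y_1w|$ again, now because $|y_1|=2$ and for every $w$. So the same conclusion holds for all $w\in\mathbb D$.

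\textbf{Step 3: $(3)\Rightarrow(1)$.} Suppose $\mathcal B(w)(z)\in b\Gamma_{E(2;2;1,1)}$ for almost every $z\in\mathbb T$. Then the third coordinate is unimodular, which gives
\[
|x_2-2wx_3|=|2-y_1w|.
\]
We will use the $\Gamma_{E(3;2;1,2)}$ analogue of \cite[Theorem 3.14]{apal1}, namely \cite[Theorem 3.2]{apal2}. This characterizes $K_1$ by the condition that the triple $\mathcal B(w)$ lies in $b\Gamma_{E(2;2;1,1)}$ for all $w$ in $\mathbb T$ (respectively $\mathbb D$). It yields $\widetilde{\mathbf{x}}(z)\in K_1$ almost everywhere, and hence $(1)$.

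If a self-contained argument is preferred, we can argue directly in case (a). Expanding $|x_2-2wx_3|^2=|2-y_1w|^2$ as a trigonometric polynomial in $w\in\mathbb T$ and comparing coefficients gives two relations:
\[
|x_2|^2+4|x_3|^2=4+|y_1|^2,\qquad \bar x_2x_3=\bar y_1.
\]
These force $|x_3|=1$, and then $x_2=\bar y_1x_3$. Case (b) is handled similarly, by letting $w$ vary in $\mathbb D$.

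The main obstacle is locating and correctly applying the point-level characterization of $\Gamma_{E(3;2;1,2)}$ and of $K_1$ through $\mathcal B(w)$. In particular, the scaling by $2$ must match the normalization used in Lemma~\ref{Lem 10}, and the first two coordinates of $\mathcal B(w)$ must satisfy the distinguished-boundary relation $a=\bar b c$. That relation should reduce, via Lemma~\ref{Lem 4}, to the identity $x_1=\bar y_2x_3$.
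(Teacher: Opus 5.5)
Your main line follows the paper's route. The paper omits this proof, saying it follows the proof of Theorem~\ref{Thm 1}, and your Steps 1--3 are that transcription:
\begin{itemize}
\item Proposition~\ref{Prop K_1} for $(1)\Leftrightarrow(2)$;
\item a point-level inclusion $\mathcal B(w)\in\Gamma_{E(2;2;1,1)}$ plus \cite[Theorem 7.1]{Abouhajar} for $(1)\Rightarrow(3)$;
\item a point-level characterization of $K_1$ for $(3)\Rightarrow(1)$.
\end{itemize}
The point-level facts you flag as the main obstacle reduce to the ones used for Theorem~\ref{Thm 1}. Up to swapping its first two coordinates, $\mathcal B(w)$ is $\mathcal A_2(w)$ of the $7$-tuple $\bigl(x_1,\tfrac{y_1}{2},\tfrac{x_2}{2},\tfrac{y_1}{2},\tfrac{x_2}{2},y_2,x_3\bigr)$ of Theorem~\ref{Thm 5}. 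Its second and fourth entries are $y_1/2$, so the factor $2$ and the dichotomy $|y_1|<2$ versus $|y_1|=2$ are those of Theorem~\ref{Thm 1}(3). There is one small omission in Step 2. A $\Gamma_{E(2;2;1,1)}$-inner function must be holomorphic on $\mathbb D$, so you need $2-y_1w\neq0$ on $\mathbb D$, not only a.e.\ on $\mathbb T$. In case (a) this holds because $|y_1|\le 2$ on $\mathbb D$, and equality at an interior point would make $y_1$ a constant of modulus $2$.

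The genuine error is in your ``self-contained'' fallback for case (a). First, comparing coefficients of $w$ in $|x_2-2wx_3|^2=|2-y_1w|^2$ gives $\bar x_2x_3=y_1$ (equivalently $x_2\bar x_3=\bar y_1$), not $\bar x_2x_3=\bar y_1$. Your version would lead to $x_2=y_1x_3$. Second, even the corrected relation together with $|x_2|^2+4|x_3|^2=4+|y_1|^2$ only gives
\[
(|x_2|^2-4)(1-|x_3|^2)=0.
\]
So it does not force $|x_3|=1$. For instance, $x_2=2$, $x_3=y_1=0$ satisfies both relations and $|x_2-2wx_3|=|2-y_1w|$ for every $w\in\mathbb T$, and is compatible with $|y_1|<2$. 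Unimodularity of the third coordinate of $\mathcal B(w)$ is therefore not enough; you must also use the other defining relation $a=\bar b c$ of $b\Gamma_{E(2;2;1,1)}$. For $\mathcal B(w)=(a,b,c)$ and $w\in\mathbb T$, clearing denominators turns this into
\[
(y_1-2wy_2)(2-\bar y_1\bar w)=(2\bar x_1-\bar w\bar x_2)(x_2-2wx_3).
\]
The coefficients of $\bar w$ and $w$ give $|y_1|=|x_2|$ and $y_2=\bar x_1x_3$. Inserting $|y_1|=|x_2|$ into the constant-term identity yields $|x_3|=1$. Proposition~\ref{Prop K_1} (or directly $x_2=\bar y_1x_3$ and $x_1=\bar y_2x_3$) then gives $\widetilde{\mathbf x}(z)\in K_1$, which also makes the citation in your Step 3 unnecessary. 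In case (b) no repair is needed: with $w$ ranging over $\mathbb D$, the coefficient of $|w|^2$ gives $4|x_3|^2=|y_1|^2=4$ directly.
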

Finally, we establish a relation between rational
$\Gamma_{E(3;2;1,2)}$-inner functions and rational
$\Gamma_{E(2;1;2)}$-inner functions. For this purpose, define
\begin{equation*}
\mathcal{C}(w)
=
\left(
\frac{y_1-wx_2}{1-wx_1},
\frac{y_2-wx_3}{1-wx_1}
\right).
\end{equation*}
The following theorem can be proved by applying arguments similar to
those used in the proof of Theorem~\ref{Thm 1}. We therefore omit the
proof.

\begin{thm}\label{Thm 3}
Let
\(
\widetilde{\mathbf{x}}
=(x_1,x_2,x_3,y_1,y_2)
\)
be a $5$-tuple of rational functions defined on $\mathbb{D}$. Then
the following statements are equivalent:
\begin{enumerate}
\item[(1)] $\widetilde{\mathbf{x}}$ is a rational
$\Gamma_{E(3;2;1,2)}$-inner function.

\item[(2)] $\widetilde{\mathbf{x}}(\mathbb{D})\subseteq
\Gamma_{E(3;2;1,2)}$ and $x_3$ is inner.

\item[(3)] $\widetilde{\mathbf{x}}(\mathbb{D})\subseteq
\Gamma_{E(3;2;1,2)}$ and at least one of the following conditions
holds:
\begin{enumerate}
\item[(a)] If
\(
|x_1(z)|<1
~\text{and}~
|x_3(z)|=1
\)
for almost every $z\in\mathbb{T}$, then, for every $w\in\mathbb{T}$,
$\mathcal{C}(w)$ is a rational
$\Gamma_{E(2;1;2)}$-inner function.

\item[(b)] If
\(
|x_1(z)|=1
\)
for almost every $z\in\mathbb{T}$, then, for every $w\in\mathbb{D}$,
$\mathcal{C}(w)$ is a rational
$\Gamma_{E(2;1;2)}$-inner function.
\end{enumerate}
\end{enumerate}
\end{thm}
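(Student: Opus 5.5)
The plan is to follow the scheme of the proof of Theorem~\ref{Thm 1}, establishing $(1)\Leftrightarrow(2)$ and then $(1)\Leftrightarrow(3)$. For $(1)\Rightarrow(2)$ we use the definition of a rational $\Gamma_{E(3;2;1,2)}$-inner function together with Lemma~\ref{Lem 3}(2): $\widetilde{\mathbf{x}}$ maps $\mathbb D$ into $\Gamma_{E(3;2;1,2)}$ and $x_3$ is inner. For $(2)\Rightarrow(1)$, note that $\widetilde{\mathbf{x}}$ is rational with values in the compact set $\Gamma_{E(3;2;1,2)}$, so it extends continuously to $\overline{\mathbb D}$ and $\widetilde{\mathbf{x}}(z)\in\Gamma_{E(3;2;1,2)}$ for $z\in\mathbb T$. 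Since $|x_3(z)|=1$ almost everywhere on $\mathbb T$, Proposition~\ref{Prop K_1} gives $\widetilde{\mathbf{x}}(z)\in K_1$ almost everywhere.

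For $(1)\Rightarrow(3)$, we fix $z\in\mathbb T$ outside a null set, so that $\widetilde{\mathbf{x}}(z)\in K_1$, that is, $x_1=\overline{y_2}x_3$, $x_2=\overline{y_1}x_3$ and $|x_3|=1$ at $z$.

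\begin{itemize}
\item In case (a) we take $w\in\mathbb T$. Then $|x_1(z)|<1$ gives $1-wx_1(z)\neq0$. In case (b) we take $w\in\mathbb D$, and $|wx_1(z)|<1$ gives the same conclusion.
\item Next we need the point-level fact that $\mathcal C(w)$ lies in $\Gamma_{E(2;1;2)}$ whenever $\widetilde{\mathbf{x}}\in\Gamma_{E(3;2;1,2)}$. This is the analogue of \cite[Theorem 2.22, 2.29]{apal1} used in Theorem~\ref{Thm 1}, and we would quote it from the characterization of $G_{E(3;2;1,2)}$ in \cite[Theorem 3.5]{Bharali} or from \cite{apal2}. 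It is where the linear fractional form of $\Psi_3$ enters: $\mathcal C(w)$ arises from the Schur-type reduction of the $1\times1$ block.
\item It then remains to check that the boundary value lies in $b\Gamma_{E(2;1;2)}$. For this we compute the second coordinate on $\mathbb T$ using $y_2=\overline{x_1}x_3$:
\[
\frac{y_2-wx_3}{1-wx_1}=x_3\,\frac{\overline{x_1}-w}{1-wx_1}.
\]
For $w\in\mathbb T$ this has modulus one, since $\overline{x_1}-w=-w\,\overline{(1-wx_1)}$. For $w\in\mathbb D$ with $|x_1|=1$ we have $\overline{x_1}-w=\overline{x_1}(1-wx_1)$, so the quotient equals $\overline{x_1}x_3$, again of modulus one.
\item The characterization of $b\Gamma_{E(2;1;2)}$ (the symmetrized bidisc: $(s,p)\in\Gamma$ with $|p|=1$ is in the distinguished boundary) then gives that $\mathcal C(w)$ is a rational $\Gamma_{E(2;1;2)}$-inner function.
\end{itemize}

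For $(3)\Rightarrow(1)$, assume $\mathcal C(w)$ is $\Gamma_{E(2;1;2)}$-inner for the relevant $w$. Then at almost every $z\in\mathbb T$ we have $|y_2-wx_3|=|1-wx_1|$ for all such $w$. Expanding both sides, or letting $w$ range over $\mathbb T$ (respectively $\mathbb D$) and comparing coefficients of $w$ and $\overline w$, yields $|y_2|^2+|x_3|^2=1+|x_1|^2$ and $x_1=\overline{y_2}x_3$. In case (a), $|x_3|=1$ is part of the hypothesis. In case (b), taking $w=0$ gives $|y_2|=1$, and then $x_3=\overline{x_1}^{-1}y_2$-type relations give $|x_3|=1$. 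In either case Proposition~\ref{Prop K_1} gives $\widetilde{\mathbf{x}}(z)\in K_1$. Alternatively, one can invoke the point-level characterization of $K_1$ through $\mathcal C(w)$ from \cite{apal2}, which is the analogue of \cite[Theorem 3.14]{apal1}.

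The main obstacle is the point-level input: that $\mathcal C(w)$ maps $\Gamma_{E(3;2;1,2)}$ into $\Gamma_{E(2;1;2)}$, and conversely that boundary membership of $\mathcal C(w)$ for all admissible $w$ forces $\widetilde{\mathbf{x}}(z)\in K_1$. If no ready reference exists, I would derive it from the Schur complement. Writing $A=\begin{pmatrix}a_{11}&*\\ *&A'\end{pmatrix}$, the map $\mathcal C(w)$ is exactly $\Pi_{E(2;1;2)}$ applied to $A'+wa_{21}\ldots(1-wa_{11})^{-1}$. Then $\mu_{E(3;2;1,2)}(A)\le1$ transfers to spectral radius at most one of that $2\times2$ matrix.
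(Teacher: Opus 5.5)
Your proposal is correct and follows the scheme the paper intends; the paper omits this proof and defers to the argument of Theorem~\ref{Thm 1}: $(1)\Leftrightarrow(2)$ via Proposition~\ref{Prop K_1}, and $(1)\Rightarrow(3)$ via the point-level fact that $\mathcal C(w)$ maps $\Gamma_{E(3;2;1,2)}$ into $\Gamma_{E(2;1;2)}$ together with $|p|=1$ on $\mathbb T$. Your Schur-complement reduction does supply the point-level fact, since $\det(I-AX)=(1-wx_1)\bigl(1-z_2s+z_2^2p\bigr)$ for $X=\operatorname{diag}(w,z_2,z_2)$ and $(s,p)=\mathcal C(w)$, and your coefficient comparison in $(3)\Rightarrow(1)$ simply replaces the cited point-level characterization of $K_1$ by a reduction to $(2)$.

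One thing to add: you rightly read (3)(a) as asserting $|x_3|=1$ a.e.\ (under the literal if--then reading, $\widetilde{\mathbf x}\equiv0$ satisfies (3) vacuously). With that reading, in $(1)\Rightarrow(3)$ you must also check that one of the two cases actually occurs. This holds because $1-x_1(z)\overline{x_1(1/\overline z)}$ is rational in $z$, so either $|x_1|=1$ on all of $\mathbb T$ or $|x_1|<1$ off a finite subset of $\mathbb T$.
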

We next establish two results describing the relationship between
rational $\Gamma_{E(3;3;1,1,1)}$-inner functions and rational
$\Gamma_{E(3;2;1,2)}$-inner functions. These results provide
substantial families of rational $\Gamma_{E(3;3;1,1,1)}$-inner
functions and rational $\Gamma_{E(3;2;1,2)}$-inner functions.

\begin{thm}\label{Thm 4}
Let
\(
\mathbf{x}=(x_1,\ldots,x_7)
\)
be a $7$-tuple of rational functions defined on $\mathbb{D}$. Then
the following statements are equivalent:
\begin{enumerate}
\item[(1)] $\mathbf{x}$ is a rational
$\Gamma_{E(3;3;1,1,1)}$-inner function.

\item[(2)] For every $\eta\in\mathbb{T}$,
\(
(x_1,x_3+\eta x_5,\eta x_7,
x_2+\eta x_4,\eta x_6)
\)
is a rational $\Gamma_{E(3;2;1,2)}$-inner function.

\item[(3)] For every $\eta\in\mathbb{T}$,
\(
(x_1,x_5+\eta x_3,\eta x_7,
x_4+\eta x_2,\eta x_6)
\)
is a rational $\Gamma_{E(3;2;1,2)}$-inner function.

\item[(4)] For every $\eta\in\mathbb{T}$,
\(
(x_1,x_3+\eta x_6,\eta x_7,
x_1+\eta x_4,\eta x_5)
\)
is a rational $\Gamma_{E(3;2;1,2)}$-inner function.

\item[(5)] For every $\eta\in\mathbb{T}$,
\(
(x_1,x_6+\eta x_3,\eta x_7,
x_4+\eta x_1,\eta x_5)
\)
is a rational $\Gamma_{E(3;2;1,2)}$-inner function.

\item[(6)] For every $\eta\in\mathbb{T}$,
\(
(x_1,x_5+\eta x_6,\eta x_7,
x_1+\eta x_2,\eta x_3)
\)
is a rational $\Gamma_{E(3;2;1,2)}$-inner function.

\item[(7)] For every $\eta\in\mathbb{T}$,
\(
(x_1,x_6+\eta x_5,\eta x_7,
x_2+\eta x_1,\eta x_3)
\)
is a rational $\Gamma_{E(3;2;1,2)}$-inner function.
\end{enumerate}
\end{thm}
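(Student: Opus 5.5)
We follow the pattern of Propositions~\ref{Prop 5} and \ref{Prop 6}: Theorem~\ref{Thm 4} is the function-theoretic version of a pointwise correspondence between $\Gamma_{E(3;3;1,1,1)}$ and $\Gamma_{E(3;2;1,2)}$ from \cite{apal1,apal2}. That correspondence says a point $\mathbf{x}\in\mathbb C^7$ lies in $\Gamma_{E(3;3;1,1,1)}$ (resp.\ in $K$) if and only if, for every $\eta\in\mathbb T$, the $5$-tuple $(x_1,x_3+\eta x_5,\eta x_7,x_2+\eta x_4,\eta x_6)$ lies in $\Gamma_{E(3;2;1,2)}$ (resp.\ in $K_1$), together with its coordinate permutations. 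We prove $(1)\Leftrightarrow(2)$ in detail. The remaining equivalences $(1)\Leftrightarrow(k)$, $3\le k\le 7$, follow in the same way: we permute the roles of the indices $1,2,3$ of the underlying $3\times 3$ matrix, which permutes the coordinates of $\mathbf x$ exactly as in the listed tuples.

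For $(1)\Rightarrow(2)$, fix $\eta\in\mathbb T$ and put $\widetilde{\mathbf{x}}_\eta=(x_1,x_3+\eta x_5,\eta x_7,x_2+\eta x_4,\eta x_6)$. This is a $5$-tuple of rational functions with no poles in $\overline{\mathbb D}$.
\begin{enumerate}
\item For $z\in\mathbb D$ we have $\mathbf x(z)\in\Gamma_{E(3;3;1,1,1)}$, so the pointwise result gives $\widetilde{\mathbf{x}}_\eta(z)\in\Gamma_{E(3;2;1,2)}$.
\item By Lemma~\ref{Lem 1}, $x_7$ is inner, hence so is its third coordinate $\eta x_7$.
\item Theorem~\ref{Thm 2}, $(2)\Rightarrow(1)$, then shows that $\widetilde{\mathbf{x}}_\eta$ is a rational $\Gamma_{E(3;2;1,2)}$-inner function.
\end{enumerate}
As a consistency check, on $\mathbb T$ the relations $x_i=\overline{x_{7-i}}x_7$ give $x_1=\overline{\eta x_6}\,\eta x_7$ and $x_3+\eta x_5=\overline{(x_2+\eta x_4)}\,\eta x_7$. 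These are exactly the defining relations of $K_1$ in Proposition~\ref{Prop K_1}.

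For $(2)\Rightarrow(1)$, suppose $\widetilde{\mathbf{x}}_\eta(\mathbb D)\subseteq\Gamma_{E(3;2;1,2)}$ for every $\eta\in\mathbb T$.
\begin{enumerate}
\item The converse direction of the pointwise correspondence gives $\mathbf x(z)\in\Gamma_{E(3;3;1,1,1)}$ for each $z\in\mathbb D$.
\item Lemma~\ref{Lem 3} applied with $\eta=1$ shows that the third coordinate $x_7$ is inner.
\item Theorem~\ref{Thm 1}, $(2)\Rightarrow(1)$, finishes the proof.
\end{enumerate}
Alternatively, one can verify the boundary relations directly. Lemma~\ref{Lem 3} gives, a.e.\ on $\mathbb T$ and for all $\eta$,
\[
x_1=\overline{x_6}\,x_7,\qquad x_3+\eta x_5=(\overline{x_2}+\overline{\eta}\,\overline{x_4})\,\eta x_7 .
\]
Comparing coefficients of $1,\eta$ as trigonometric polynomials in $\eta$ (using a countable dense set of $\eta$ to handle the a.e.\ issue) yields $x_3=\overline{x_4}x_7$ and $x_5=\overline{x_2}x_7$. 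The remaining relations of $K$ then follow from Proposition~\ref{Prop K} once $\mathbf x(z)\in\Gamma_{E(3;3;1,1,1)}$ and $|x_7(z)|=1$ are known.

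The main obstacle is the pointwise statement itself, especially its converse half: membership of all $\widetilde{\mathbf{x}}_\eta(z)$ in $\Gamma_{E(3;2;1,2)}$ for every $\eta\in\mathbb T$ should force $\mathbf x(z)\in\Gamma_{E(3;3;1,1,1)}$. We plan to take this from \cite{apal1,apal2}. If no such result is available there, we would argue via the characterizations by the functions $\Psi^{(i)}$ and $\Psi_3$. Substituting $z_3$-type variables by $\eta$-multiples relates $\Psi_3(\cdot,\widetilde{\mathbf{x}}_\eta)$ to $\Psi^{(1)}(\cdot,\cdot,\mathbf x)$ restricted to a diagonal-type slice. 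Taking the supremum over $\eta\in\mathbb T$ recovers $\|\Psi^{(1)}(\cdot,\mathbf x)\|_{H^\infty(\mathbb D^2)}\le 1$. Combined with the corresponding denominator nonvanishing, this gives the closed-domain membership.
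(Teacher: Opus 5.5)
Your argument for $(1)\Leftrightarrow(2)$ is correct and is essentially the paper's. In both directions the interior statement comes from the pointwise correspondence \cite[Theorem 2.48]{apal1}. The only difference is at the boundary: you reduce to innerness of the distinguished coordinate through Theorems~\ref{Thm 1} and~\ref{Thm 2} (i.e.\ Propositions~\ref{Prop K} and~\ref{Prop K_1}). The paper instead verifies the $K_1$-relations via \cite[Theorem 3.20]{apal1} and derives the $K$-relations by comparing coefficients of $\eta$, which is exactly your alternative. Your fallback sketch, however, needs more than a supremum over $\eta$. Since $\Psi_3(w,\widetilde{\mathbf{x}}_\eta)=\Psi^{(1)}(w,\eta w,\mathbf{x})$, it only controls $\Psi^{(1)}$ on the slice $\{|z_2|=|z_3|\}$. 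Reaching all of $\mathbb D^2$ needs a one-variable maximum-principle step, plus zero-freeness of the denominator off that slice.

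The genuine gap is your claim that permuting the indices of the underlying matrix ``permutes the coordinates of $\mathbf{x}$ exactly as in the listed tuples''. For (4)--(7) it does not.
\begin{itemize}
\item The transposition $1\leftrightarrow 2$ sends $\mathbf{x}$ to $(x_2,x_1,x_3,x_4,x_6,x_5,x_7)$. Applying (2) and (3) to this gives $(x_2,x_3+\eta x_6,\eta x_7,x_1+\eta x_4,\eta x_5)$ and $(x_2,x_6+\eta x_3,\eta x_7,x_4+\eta x_1,\eta x_5)$.
\item The transposition $1\leftrightarrow 3$ sends $\mathbf{x}$ to $(x_4,x_2,x_6,x_1,x_5,x_3,x_7)$. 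Applying (3) and (2) gives $(x_4,x_5+\eta x_6,\eta x_7,x_1+\eta x_2,\eta x_3)$ and $(x_4,x_6+\eta x_5,\eta x_7,x_2+\eta x_1,\eta x_3)$.
\end{itemize}
So the first entry must be $x_2$ in (4) and (5), and $x_4$ in (6) and (7).

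As printed, with $x_1$ in the first slot, these items are false. Take $\mathbf{x}=\Pi_{E(3;3;1,1,1)}(\operatorname{diag}(z,1,1))=(z,1,z,1,z,1,z)$. This is a rational $\Gamma_{E(3;3;1,1,1)}$-inner function, since its last coordinate is $z$. Yet in each of (4)--(7) the first, third and fifth entries are $z$, $\eta z$, $\eta z$. The $K_1$-relation linking the first, fifth and third entries would then read $z=\overline{\eta z}\,\eta z=1$ on $\mathbb T$, which fails.

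You should state the corrected items and then justify the permutation step explicitly. For a permutation matrix $P$ one has $P^{-1}E(3;3;1,1,1)P=E(3;3;1,1,1)$ with norms preserved, so $\mu_{E(3;3;1,1,1)}(PAP^{-1})=\mu_{E(3;3;1,1,1)}(A)$. Hence the induced coordinate permutation preserves $\Gamma_{E(3;3;1,1,1)}$ and the pairing $i\leftrightarrow 7-i$ that defines $K$. Applying $(1)\Leftrightarrow(2),(3)$ to the permuted tuple then gives the corrected (4)--(7). The paper's own ``analogous arguments'' carries the same misprint.
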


\begin{proof}
We prove $(1)\Longleftrightarrow(2)$. The equivalences
\[
(1)\Longleftrightarrow(3),
(1)\Longleftrightarrow(4),
(1)\Longleftrightarrow(5),
(1)\Longleftrightarrow(6),~\text{and}~
(1)\Longleftrightarrow(7)
\]
follow by analogous arguments.

\medskip
\noindent
$(1)\Rightarrow(2)$:
Suppose that $\mathbf{x}$ is a rational
$\Gamma_{E(3;3;1,1,1)}$-inner function. Then
\(
\mathbf{x}(z)
=(x_1(z),\ldots,x_7(z))\in K
\)
for almost every $z\in\mathbb{T}$. By
\cite[Theorem 3.20]{apal1}, for every fixed
$\eta\in\mathbb{T}$,
\[
(x_1(z),x_3(z)+\eta x_5(z),\eta x_7(z),
x_2(z)+\eta x_4(z),\eta x_6(z))
\in K_1
\]
for almost every $z\in\mathbb{T}$.
Moreover, since
\(
\mathbf{x}(z)\in
\Gamma_{E(3;3;1,1,1)},
\)
\cite[Theorem 2.48]{apal1} gives
\[
(x_1(z),x_3(z)+\eta x_5(z),\eta x_7(z),
x_2(z)+\eta x_4(z),\eta x_6(z))
\in
\Gamma_{E(3;2;1,2)}
\]
for every $z\in\mathbb{D}$ and every $\eta\in\mathbb{T}$.
Therefore,
\(
(x_1,x_3+\eta x_5,\eta x_7,
x_2+\eta x_4,\eta x_6)
\)
is a $5$-tuple of rational functions mapping $\mathbb{D}$ into
$\Gamma_{E(3;2;1,2)}$ and mapping almost every point of
$\mathbb{T}$ into $K_1$. Hence it is a rational
$\Gamma_{E(3;2;1,2)}$-inner function for every
$\eta\in\mathbb{T}$.

\medskip
\noindent
$(2)\Rightarrow(1)$:
Suppose that, for every $\eta\in\mathbb{T}$,
\(
(x_1,x_3+\eta x_5,\eta x_7,
x_2+\eta x_4,\eta x_6)
\)
is a rational $\Gamma_{E(3;2;1,2)}$-inner function. By
Lemma~\ref{Lem 3}, for almost every $z\in\mathbb{T}$,
\begin{equation}\label{T 7}
x_1(z)
=
\overline{\eta x_6(z)}\,\eta x_7(z)
=
\overline{x_6(z)}x_7(z),
\quad \eta\in\mathbb{T},
\end{equation}
and
\begin{equation}\label{T 8}
x_3(z)+\eta x_5(z)
=
\overline{x_2(z)+\eta x_4(z)}\,\eta x_7(z),
\qquad \eta\in\mathbb{T}.
\end{equation}
Expanding the right-hand side of \eqref{T 8}, we obtain
\[
x_3(z)+\eta x_5(z)
=
\overline{x_2(z)}\,\eta x_7(z)
+
\overline{x_4(z)}\,x_7(z).
\]
Hence
\begin{equation}\label{T 9}
x_3(z)-\overline{x_4(z)}x_7(z)
+
\eta\bigl(
x_5(z)-\overline{x_2(z)}x_7(z)
\bigr)
=0.
\end{equation}
Since \eqref{T 9} holds for every $\eta\in\mathbb{T}$, both the
constant term and the coefficient of $\eta$ must vanish. Thus,
\(
x_3(z)=\overline{x_4(z)}x_7(z)
\)
and
\(
x_5(z)=\overline{x_2(z)}x_7(z),
\)
or, equivalently,
\(
x_2(z)=\overline{x_5(z)}x_7(z),
\)
for almost every $z\in\mathbb{T}$. Together with \eqref{T 7}, we
obtain
\[
x_1(z)=\overline{x_6(z)}x_7(z),
\quad
x_2(z)=\overline{x_5(z)}x_7(z),
\quad
x_3(z)=\overline{x_4(z)}x_7(z).
\]
Furthermore, since $|\eta x_7(z)|=1$ for almost every
$z\in\mathbb{T}$, we have
\(
|x_7(z)|=1
\)
for almost every $z\in\mathbb{T}. $ Therefore,
\(
\mathbf{x}(z)\in K
\)
for almost every $z\in\mathbb{T}.$
On the other hand, by \cite[Theorem 2.48]{apal1},
the corresponding $7$-tuple maps $\mathbb{D}$ into
$\Gamma_{E(3;3;1,1,1)}$. Consequently,
$\mathbf{x}$ is a rational
$\Gamma_{E(3;3;1,1,1)}$-inner function.
This proves $(1)\Longleftrightarrow(2)$, and the proof of the theorem
is complete.
\end{proof}

\begin{thm}\label{Thm 5}
Let
\(
\widetilde{\mathbf{x}}
=(x_1,x_2,x_3,y_1,y_2)
\)
be a $5$-tuple of rational functions defined on $\mathbb{D}$. Then
the following statements are equivalent:
\begin{enumerate}
\item[(1)] $\widetilde{\mathbf{x}}$ is a rational
$\Gamma_{E(3;2;1,2)}$-inner function.

\item[(2)] The $7$-tuple
\(
\left(
x_1,\frac{y_1}{2},\frac{x_2}{2},
\frac{y_1}{2},\frac{x_2}{2},y_2,x_3
\right)
\)
is a rational $\Gamma_{E(3;3;1,1,1)}$-inner function.
\end{enumerate}
\end{thm}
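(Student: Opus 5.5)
The plan is to follow the pattern of the proof of Theorem~\ref{Thm 4}. The argument splits into two ingredients: a pointwise statement about the sets, and a boundary statement about $K$ and $K_1$. Define the map
\[
\Phi(x_1,x_2,x_3,y_1,y_2)=\Bigl(x_1,\tfrac{y_1}{2},\tfrac{x_2}{2},\tfrac{y_1}{2},\tfrac{x_2}{2},y_2,x_3\Bigr).
\]
The first ingredient, which I take from \cite{apal1} (the point-level analogue used there to relate $\Gamma_{E(3;2;1,2)}$ and $\Gamma_{E(3;3;1,1,1)}$, in the same way \cite[Theorem 2.48]{apal1} was used above), is
\[
\widetilde{\mathbf{x}}\in\Gamma_{E(3;2;1,2)}\iff\Phi(\widetilde{\mathbf{x}})\in\Gamma_{E(3;3;1,1,1)}.
\]
Heuristically, $\Phi$ is a ``diagonal'' embedding. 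It corresponds to taking $x_2=x_4=y_1/2$ and $x_3=x_5=x_2/2$ in the $\Gamma_{E(3;3;1,1,1)}$ coordinates, mirroring the passage $(x_1,x_3+\eta x_5,\eta x_7,x_2+\eta x_4,\eta x_6)$ of Theorem~\ref{Thm 4} with $\eta=1$. With this equivalence, the condition that the tuple maps $\mathbb D$ into the right closed domain transfers in both directions. Rationality is clearly preserved by $\Phi$ and by its inverse on the image.

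For $(1)\Rightarrow(2)$, assume $\widetilde{\mathbf{x}}$ is rational $\Gamma_{E(3;2;1,2)}$-inner. By Theorem~\ref{Thm 2}, $x_3$ is inner, and $\widetilde{\mathbf{x}}(\mathbb D)\subseteq\Gamma_{E(3;2;1,2)}$. Hence $\Phi\circ\widetilde{\mathbf{x}}$ maps $\mathbb D$ into $\Gamma_{E(3;3;1,1,1)}$. Its seventh coordinate is $x_3$, which is inner. So Theorem~\ref{Thm 1} $(2)\Rightarrow(1)$ gives that $\Phi\circ\widetilde{\mathbf{x}}$ is rational $\Gamma_{E(3;3;1,1,1)}$-inner.

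As a sanity check, one can also verify the boundary relations directly. Lemma~\ref{Lem 3} gives $x_1=\overline{y_2}x_3$ and $x_2=\overline{y_1}x_3$ a.e. on $\mathbb T$. Dividing the second by $2$ gives exactly the relations $\mathbf{x}_i=\overline{\mathbf{x}_{7-i}}\mathbf{x}_7$ defining $K$. For instance,
\[
\mathbf{x}_3=\tfrac{x_2}{2}=\overline{\tfrac{y_1}{2}}\,x_3=\overline{\mathbf{x}_4}\,\mathbf{x}_7 .
\]

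For $(2)\Rightarrow(1)$, assume $\Phi\circ\widetilde{\mathbf{x}}$ is rational $\Gamma_{E(3;3;1,1,1)}$-inner. Its seventh coordinate $x_3$ is inner by Lemma~\ref{Lem 1}. Pointwise, the reverse inclusion in the equivalence above gives $\widetilde{\mathbf{x}}(\mathbb D)\subseteq\Gamma_{E(3;2;1,2)}$. Theorem~\ref{Thm 2} $(2)\Rightarrow(1)$ then finishes the argument. Alternatively, Lemma~\ref{Lem 1} yields $x_1=\overline{y_2}x_3$ and $x_2/2=\overline{y_1/2}\,x_3$ a.e., so $\widetilde{\mathbf{x}}(z)\in K_1$ by Proposition~\ref{Prop K_1}.

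The main obstacle is the pointwise equivalence $\widetilde{\mathbf{x}}\in\Gamma_{E(3;2;1,2)}\iff\Phi(\widetilde{\mathbf{x}})\in\Gamma_{E(3;3;1,1,1)}$. If it is not available verbatim in \cite{apal1}, I would prove it through the linear-fractional characterizations. The first step is to compare $\Psi_3(z,\widetilde{\mathbf{x}})$ with $\Psi^{(1)}(z_2,z_3,\Phi(\widetilde{\mathbf{x}}))$ on the diagonal $z_2=z_3=z$. There, the numerator becomes $x_1-zx_2+z^2x_3$ and the denominator becomes $1-zy_1+z^2y_2$, so the two functions agree. The direction ``$\Gamma_{E(3;3;1,1,1)}\Rightarrow\Gamma_{E(3;2;1,2)}$'' then follows by restricting to the diagonal. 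The converse, which needs the supremum over the full bidisc $\mathbb{D}^2$ to be controlled by the diagonal, is the delicate part. For it I would use the symmetry of $\Phi(\widetilde{\mathbf{x}})$ under swapping $z_2,z_3$, together with the matrix realization of $\Gamma_{E(3;2;1,2)}$ by a $3\times3$ matrix with a repeated $2\times2$ block structure. Concretely, I would pick $A$ with $\mu_{E(3;2;1,2)}(A)\le1$ realizing $\widetilde{\mathbf{x}}$, symmetrize it in coordinates $2,3$, and check that $\mu_{E(3;3;1,1,1)}\le1$ for the symmetrized matrix.
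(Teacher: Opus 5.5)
Your proposal is correct and is essentially the paper's argument: the paper also gets $(1)\Rightarrow(2)$ from the boundary relations of Lemma~\ref{Lem 3} together with a cited pointwise inclusion $\Phi(\Gamma_{E(3;2;1,2)})\subseteq\Gamma_{E(3;3;1,1,1)}$, and it gets $(2)\Rightarrow(1)$ from Theorem~\ref{Thm 4} at $\eta=1$, which is exactly your reverse inclusion since $(\mathbf{x}_1,\mathbf{x}_3+\mathbf{x}_5,\mathbf{x}_7,\mathbf{x}_2+\mathbf{x}_4,\mathbf{x}_6)=\widetilde{\mathbf{x}}$ when $\mathbf{x}=\Phi(\widetilde{\mathbf{x}})$. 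If you need to prove the forward inclusion yourself, note that symmetry in $(z_2,z_3)$ plus a symmetrized matrix does not by itself let the diagonal control the bidisc; the right tool is the Grace--Walsh--Szeg\H{o} coincidence theorem, applied for each fixed $z_1\in\overline{\mathbb D}$ to the symmetric multi-affine polynomial $(z_2,z_3)\mapsto 1-x_1z_1-\tfrac{y_1}{2}(z_2+z_3)+\tfrac{x_2}{2}z_1(z_2+z_3)+y_2z_2z_3-x_3z_1z_2z_3$ on the convex set $\overline{\mathbb D}$, followed by passing to closures.
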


\begin{proof}
Suppose first that $\widetilde{\mathbf{x}}$ is a rational
$\Gamma_{E(3;2;1,2)}$-inner function. By Lemma~\ref{Lem 3}, for
almost every $z\in\mathbb{T}$,
\[
x_1(z)=\overline{y_2(z)}x_3(z),
\quad
x_2(z)=\overline{y_1(z)}x_3(z),
\quad
|x_3(z)|=1.
\]
Consequently,
\[
x_1(z)=\overline{y_2(z)}x_3(z),\quad
\frac{y_1(z)}{2}
=
\frac{\overline{x_2(z)}}{2}x_3(z),\quad\text{
and}\quad
\frac{x_2(z)}{2}
=
\frac{\overline{y_1(z)}}{2}x_3(z)
\]
for almost every $z\in\mathbb{T}$. Together with
\(
|x_3(z)|=1,
\)
these are precisely the boundary relations required for the
$7$-tuple
\(
\left(
x_1,\frac{y_1}{2},\frac{x_2}{2},
\frac{y_1}{2},\frac{x_2}{2},y_2,x_3
\right)
\)
to belong to 
$K$ of $\Gamma_{E(3;3;1,1,1)}$.
Moreover,
\(
\widetilde{\mathbf{x}}(\mathbb{D})
\subseteq
\Gamma_{E(3;2;1,2)}.
\)
Hence, by \cite[Theorem 3.5]{Bharali},
\[
\left(
x_1(z),\frac{y_1(z)}{2},\frac{x_2(z)}{2},
\frac{y_1(z)}{2},\frac{x_2(z)}{2},
y_2(z),x_3(z)
\right)
\in
\Gamma_{E(3;3;1,1,1)}
\]
for every $z\in\mathbb{D}$. Therefore,
\(
\left(
x_1,\frac{y_1}{2},\frac{x_2}{2},
\frac{y_1}{2},\frac{x_2}{2},y_2,x_3
\right)
\)
is a rational $\Gamma_{E(3;3;1,1,1)}$-inner function.

Conversely, suppose that
\(
\left(
x_1,\frac{y_1}{2},\frac{x_2}{2},
\frac{y_1}{2},\frac{x_2}{2},y_2,x_3
\right)
\)
is a rational $\Gamma_{E(3;3;1,1,1)}$-inner function. The converse
implication then follows from Theorem~\ref{Thm 4}, applied to the
appropriate choice of coordinates. Hence
$\widetilde{\mathbf{x}}$ is a rational
$\Gamma_{E(3;2;1,2)}$-inner function.
\end{proof}
	
\section{Examples of $\Gamma_{E(3;3;1,1,1)}$- and
$\Gamma_{E(3;2;1,2)}$-Inner Functions}
\label{Example}

The definitions of $\Gamma_{E(3;3;1,1,1)}$-inner and
$\Gamma_{E(3;2;1,2)}$-inner functions naturally suggest methods for
constructing concrete examples of such functions. In particular, once
the  $K$ and $K_1$ are properly understood,
the construction of inner functions becomes relatively transparent.
In this section, we present several explicit examples of
$\Gamma_{E(3;3,1,1,1)}$- and $\Gamma_{E(3;2;1,2)}$-inner functions.
We begin with two non-constant scalar inner functions.

\begin{exam}\label{Exam 1}
Let $\varphi,\psi\in H^\infty(\mathbb D)$ be two non-constant inner
functions, and let
\[
\mathbb B_3
=
\{A\in M_3(\mathbb C):\|A\|<1\}
\]
denote the open unit ball of $M_3(\mathbb C)$.

\noindent\textbf{(A)}
Consider the diagonal matrix-valued function
\[
h(z)
=
\begin{pmatrix}
\varphi(z) & 0 & 0\\
0 & \psi(z) & 0\\
0 & 0 & \varphi(z)\psi(z)
\end{pmatrix},
\quad z\in\mathbb D.
\]
Since $\varphi$ and $\psi$ are non-constant inner functions, we have
\(
|\varphi(z)|<1, |\psi(z)|<1, z\in\mathbb D.
\)
Consequently,
\[
\|h(z)\|
=
\max\big\{
|\varphi(z)|,\,
|\psi(z)|,\,
|\varphi(z)\psi(z)|
\big\}
<1,
\quad z\in\mathbb D.
\]
Thus, $h:\mathbb D\to\mathbb B_3$ is holomorphic. Define
\(
\mathbf{x}
=
\Pi_{E(3;3;1,1,1)}\circ h.
\)
By \cite[Theorem 2.41]{apal1}, we obtain
\[
\begin{aligned}
\mathbf{x}(z)
={}
\big(
\varphi(z),\,
\psi(z),\,
\varphi(z)\psi(z),\,
\varphi(z)\psi(z),\,
\varphi^2(z)\psi(z),
\varphi(z)\psi^2(z),\,
\varphi^2(z)\psi^2(z)
\big)
\in
\Gamma_{E(3;3;1,1,1)},
\quad z\in\mathbb D.
\end{aligned}
\]
Hence, $\mathbf{x}$ is a holomorphic map from $\mathbb D$ into
$\Gamma_{E(3;3;1,1,1)}$. Since $\varphi$ and $\psi$ are inner
functions, their radial boundary values are unimodular almost
everywhere on $\mathbb T$. Therefore,
\[
|x_7(\zeta)|
=
|\varphi(\zeta)|^2|\psi(\zeta)|^2
=1
\]
for almost every $\zeta\in\mathbb T$. Moreover, for almost every
$\zeta\in\mathbb T$,
\[
x_i(\zeta)
=
\overline{x_{7-i}(\zeta)}\,x_7(\zeta),
\quad 1\leq i\leq6.
\]
Thus $\mathbf{x}(\zeta)\in K$ for almost every $\zeta\in\mathbb T$.
Hence $\mathbf{x}$ is a $\Gamma_{E(3;3;1,1,1)}$-inner function.

\medskip

\noindent\textbf{(B)}
The preceding example can be generalized by applying unitary
transformations to the matrix-valued function $h$. Consider the
unitary matrices
\[
U=
\begin{pmatrix}
1 & 0 & 0\\
0 & \dfrac{1}{\sqrt2} & \dfrac{1}{\sqrt2}\\
0 & -\dfrac{1}{\sqrt2} & \dfrac{1}{\sqrt2}
\end{pmatrix},
\quad
V=
\begin{pmatrix}
1&0&0\\
0&1&0\\
0&0&1
\end{pmatrix}.
\]
Define
\[
g(z)=Uh(z)V,\quad z\in\mathbb D.
\]
Since $U$ and $V$ are unitary, we have
\[
\|g(z)\|=\|h(z)\|<1,
\quad z\in\mathbb D.
\]
Therefore, $g:\mathbb D\to\mathbb B_3$ is holomorphic. Explicitly,
\[
g(z)
=
\begin{pmatrix}
\varphi(z) & 0 & 0\\[1mm]
0 & \dfrac{\psi(z)}{\sqrt2}
  & \dfrac{\varphi(z)\psi(z)}{\sqrt2}\\[2mm]
0 & -\dfrac{\psi(z)}{\sqrt2}
  & \dfrac{\varphi(z)\psi(z)}{\sqrt2}
\end{pmatrix}.
\]
Let
\(
\mathbf{x}
=
\Pi_{E(3;3;1,1,1)}\circ g.
\)
Then, by \cite[Theorem 2.41]{apal1},
\[
\begin{aligned}
\mathbf{x}(z)
=
\Big(
\varphi(z),\,
\dfrac{\psi(z)}{\sqrt2},\,
\dfrac{\varphi(z)\psi(z)}{\sqrt2},\,
\dfrac{\varphi(z)\psi(z)}{\sqrt2},\dfrac{\varphi^2(z)\psi(z)}{\sqrt2},\,
\varphi(z)\psi^2(z),\,
\varphi^2(z)\psi^2(z)
\Big),
\quad z\in\mathbb D.
\end{aligned}
\]
In particular,
\(
\mathbf{x}(\mathbb D)
\subseteq
\Gamma_{E(3;3;1,1,1)}.
\)
Furthermore, since $\varphi$ and $\psi$ are inner functions, the
radial boundary values of $\mathbf{x}$ satisfy
\(
|x_7(\zeta)|=1
\)
for almost every $\zeta\in\mathbb T$, and the defining relations of
 $K$ are satisfied almost everywhere.
Consequently,
\[
\mathbf{x}(\zeta)\in K
\]
for almost every $\zeta\in\mathbb T$. Hence $\mathbf{x}$ is a
$\Gamma_{E(3;3;1,1,1)}$-inner function.

\medskip
In Example~\ref{Exam 1}(A), the coordinates satisfy the relations
\begin{equation}\label{E 1}
\begin{aligned}
x_1x_2 =x_3,
x_1x_4 =x_5,
x_2x_4 =x_6,
x_1x_6 =x_7,
x_2x_5= x_7,
x_3x_4 =x_7,
\end{aligned}
\quad z\in\mathbb D.
\end{equation}
These identities illustrate the multiplicative structure underlying
this particular example.

\end{exam}
The preceding construction also provides examples of
$\Gamma_{E(3;2;1,2)}$-inner functions. Indeed, Theorem~\ref{Thm 4}
establishes a direct relationship between
$\Gamma_{E(3;3;1,1,1)}$-inner functions and
$\Gamma_{E(3;2;1,2)}$-inner functions. We therefore obtain the
following examples.

\begin{exam}\label{Exam 2}
Let $\varphi,\psi$ and $\mathbb B_3$ be as in
Example~\ref{Exam 1}.

\medskip

\noindent\textbf{(A)}
From Example~\ref{Exam 1}(A), the function
\(
\mathbf{x}
=
\big(
\varphi,\,
\psi,\,
\varphi\psi,\,
\varphi\psi,\,
\varphi^2\psi,\,
\varphi\psi^2,\,
\varphi^2\psi^2
\big)
\)
is a $\Gamma_{E(3;3;1,1,1)}$-inner function. Hence, by
Theorem~\ref{Thm 4}, for every fixed $\eta\in\mathbb T$, the function
\[
\widetilde{\mathbf{x}}_\eta
=
\big(
x_1,\,
x_3+\eta x_5,\,
\eta x_7,\,
x_2+\eta x_4,\,
\eta x_6
\big)
\]
is a $\Gamma_{E(3;2;1,2)}$-inner function. In the present case, this
gives
\[
\widetilde{\mathbf{x}}_\eta
=
\big(
\varphi,\,
\varphi\psi+\eta\varphi^2\psi,\,
\eta\varphi^2\psi^2,\,
\psi+\eta\varphi\psi,\,
\eta\varphi\psi^2
\big).
\]
Thus, for every $\eta\in\mathbb T$,
\(
\widetilde{\mathbf{x}}_\eta
=
\big(
\varphi,\,
\varphi\psi(1+\eta\varphi),\,
\eta\varphi^2\psi^2,\,
\psi(1+\eta\varphi),\,
\eta\varphi\psi^2
\big)
\)
is a $\Gamma_{E(3;2;1,2)}$-inner function.

\medskip

\noindent\textbf{(B)}
Similarly, starting from the
$\Gamma_{E(3;3;1,1,1)}$-inner function constructed in
Example~\ref{Exam 1}(B), Theorem~\ref{Thm 4} yields further examples
of $\Gamma_{E(3;2;1,2)}$-inner functions. For instance, using the
transformation
\[
\big(
x_1,\,
x_3+\eta x_5,\,
\eta x_7,\,
x_2+\eta x_4,\,
\eta x_6
\big),
\quad \eta\in\mathbb T,
\]
we obtain
\[
\begin{aligned}
\widetilde{\mathbf{x}}_\eta
=
\Big(
\varphi,\,
\dfrac{\varphi\psi}{\sqrt2}
+\eta\dfrac{\varphi^2\psi}{\sqrt2},\,
\eta\varphi^2\psi^2,\psi/\sqrt2
+\eta\varphi\psi/\sqrt2,\,
\eta\varphi\psi^2
\Big).
\end{aligned}
\]
Hence $\widetilde{\mathbf{x}}_\eta$ is a
$\Gamma_{E(3;2;1,2)}$-inner function for every $\eta\in\mathbb T$.

\end{exam}
	
\section{Descriptions of Rational $\Gamma_{E(3;3;1,1,1)}$-Inner
Functions and Rational $\Gamma_{E(3;2;1,2)}$-Inner Functions}
\label{Description}
It is well known that every rational inner function on $\mathbb D$
admits a representation of the form given in
\eqref{Description of inner function}. In this section, we develop
partially analogous representations for rational
$\Gamma_{E(3;3;1,1,1)}$-inner functions and rational
$\Gamma_{E(3;2;1,2)}$-inner functions. More precisely, we establish
necessary conditions that describe the component functions of such
rational inner functions in terms of polynomials. We then obtain
partial converses of these representations. These results constitute
two of the main results of the paper.
We begin with the case of rational
$\Gamma_{E(3;3;1,1,1)}$-inner functions.
\begin{thm}\label{Description 1}
Let
\(
\mathbf{x}=(x_1,\ldots,x_7)
\)
be a rational $\Gamma_{E(3;3;1,1,1)}$-inner function of degree $n$.
Then there exist polynomials
\(
E_1,\ldots,E_6,D
\)
such that the following conditions hold:
\begin{enumerate}
    \item
    \(
    \deg E_i\leq n,1\leq i\leq6,
    \deg D\leq n.
    \)

    \item
    \(
    D(z)\neq0, z\in\overline{\mathbb D}.
    \)

    \item
    \(
    x_i=\frac{E_i}{D}, 1\leq i\leq6,
    x_7=\frac{D^{\sim n}}{D}
    \)
    on $\overline{\mathbb D}$.

    \item
    \(
    |E_i(z)|\leq |D(z)|,
     z\in\overline{\mathbb D},
    1\leq i\leq6.
    \)

    \item
    \(
    E_i=E_{7-i}^{\sim n},
    1\leq i\leq6.
    \)
\end{enumerate}
Moreover, if
\(
\{E'_1,\ldots,E'_6,D'\}
\)
is another collection of polynomials satisfying $(1)$--$(5)$ and
representing the same function $\mathbf{x}$, then there exists
$t\in\mathbb R\setminus\{0\}$ such that
\[
D'=tD,
\quad
E'_i=tE_i, \quad1\leq i\leq6.
\]
Conversely, every such real non-zero scalar multiple also satisfies
$(1)$--$(5)$ and represents the same function $\mathbf{x}$.
\end{thm}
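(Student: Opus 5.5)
Since $\mathbf{x}$ has degree $n$, the coordinate $x_7$ is a finite Blaschke product of degree $n$ (Lemma~\ref{Lem 1}). By \eqref{Description of inner function} we can therefore write $x_7=D^{\sim n}/D$ with $\deg D\le n$ and $D$ zero-free on $\overline{\mathbb D}$. Concretely, $D(z)=c\prod_{j}(1-\overline{\alpha_j}z)$, where the $\alpha_j$ are the zeros of $x_7$ counted with multiplicity, and the unimodular constant of the Blaschke product is absorbed by choosing the argument of $c$ (replacing $D$ by $cD$ multiplies $D^{\sim n}$ by $\bar c$). This gives (2) and the second half of (3). Next define $E_i:=x_iD$ for $1\le i\le6$. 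The whole point is to show that $E_i$ is a polynomial of degree at most $n$.

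To do this, use Lemma~\ref{Lem 2}: $x_i(z)=x_{7-i}^\vee(1/z)\,x_7(z)$ as rational functions. First, the poles. Each $x_i$ is holomorphic on a neighbourhood of $\overline{\mathbb D}$. By Proposition~\ref{Prop 1}(2)(a), every finite pole of $x_i$ is a pole of $x_7$ of at least the same multiplicity, and the poles of $x_7$ are exactly the zeros of $D$ with multiplicity. Hence $E_i=x_iD$ has no finite poles, so it is a polynomial. Second, the degree, which amounts to the order at $\infty$. Since $x_{7-i}$ is holomorphic at $0$, the factor $x_{7-i}^\vee(1/z)=\overline{x_{7-i}(1/\bar z)}$ is bounded as $z\to\infty$. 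Also $x_7D=D^{\sim n}$ is a polynomial of degree at most $n$. Therefore $E_i=x_{7-i}^\vee(1/z)\,D^{\sim n}(z)=O(|z|^n)$, so $\deg E_i\le n$, which is (1).

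For (5), compute
\[
E_{7-i}^{\sim n}(z)=z^n\overline{x_{7-i}(1/\bar z)}\,\overline{D(1/\bar z)}=x_{7-i}^\vee(1/z)\,D^{\sim n}(z)=x_{7-i}^\vee(1/z)x_7(z)D(z)=x_i(z)D(z)=E_i(z),
\]
using Lemma~\ref{Lem 2} again. For (4), note that $\mathbf{x}(\overline{\mathbb D})\subseteq\Gamma_{E(3;3;1,1,1)}$ by continuity and closedness. The coordinate bounds $|x_i|\le1$ on $\Gamma_{E(3;3;1,1,1)}$ follow from $(x_i,x_{7-i},x_7)\in\Gamma_{E(2;2;1,1)}$ (\cite[Theorem 2.29]{apal1}), as in the proof of Proposition~\ref{Prop K}, together with the known coordinate bounds on the tetrablock. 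This gives $|E_i|\le|D|$ on $\overline{\mathbb D}$.

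For uniqueness, suppose $(E_i',D')$ also satisfies (1)--(5). Then $D^{\sim n}/D=D'^{\sim n}/D'$. Let $\deg x_7=n$ as a rational function. Since the numerator and denominator of $x_7$ have no common zeros (the zeros lie in $\mathbb D$ and the poles outside $\overline{\mathbb D}$), both $D$ and $D'$ must be scalar multiples of the reduced denominator. This is where (1) and the degree $n$ are used: a factor of degree exceeding the reduced one is impossible because $\deg D\le n$ and the reduced form already has degree $n$. Hence $D'=tD$ with $t\in\mathbb C\setminus\{0\}$. Substituting, $D'^{\sim n}=\bar t D^{\sim n}$, and equality of the quotients gives $\bar t/t=1$, so $t\in\mathbb R$. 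Then $E_i'=x_iD'=tE_i$. The converse is immediate, since real scalars commute with $\sim n$ and cancel in all quotients.

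The main obstacle I expect is the bound $\deg E_i\le n$, together with the subtlety at $\infty$ when $\deg D<n$ (that is, when $x_7$ has zeros at $0$). I would handle both uniformly through the identity $E_i=x_{7-i}^\vee(1/z)D^{\sim n}$ rather than through pole counting. The same care applies in the uniqueness step: with $\deg D<n$, the convention $\sim n$ forces the zeros at the origin to be encoded by $D^{\sim n}$ acquiring powers of $z$.
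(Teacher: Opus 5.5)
Your proposal is correct and follows essentially the same route as the paper: write $x_7=D^{\sim n}/D$, use Proposition~\ref{Prop 1}(2) and Lemma~\ref{Lem 2} to show that $E_i=x_iD$ are polynomials of degree at most $n$ with $E_i=E_{7-i}^{\sim n}$, get (4) from $|x_i|\le 1$, and obtain uniqueness from $D'=tD$ with $\bar t=t$. Your handling of the order at $\infty$ via $E_i=x_{7-i}^{\vee}(1/z)\,D^{\sim n}(z)$, and your verification of (5) as a global rational identity, are more explicit than the paper's treatment, which leaves the degree bound to Proposition~\ref{Prop 1}(2) and checks (5) on $\mathbb T$ before extending it by the identity theorem.
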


\begin{proof}
Suppose that
\(
\mathbf{x}=(x_1,\ldots,x_7)
\)
is a rational $\Gamma_{E(3;3;1,1,1)}$-inner function of degree $n$.
By Lemma~\ref{Lem 1}, $x_7$ is a rational inner function of degree
$n$. Therefore, by the representation
\eqref{Description of inner function}, there exists a polynomial $D$
of degree at most $n$ such that
\(
D(z)\neq0, z\in\overline{\mathbb D},
\)
and
\begin{equation}\label{x_7}
x_7=\frac{D^{\sim n}}{D}.
\end{equation}
Since $\deg(x_7)=n$, Proposition~\ref{Prop 1}(2) implies that every
pole of $x_i$, $1\leq i\leq6$, is also a pole of $x_7$, with
multiplicity no greater than the corresponding multiplicity of the
pole of $x_7$. Hence, for each $1\leq i\leq6$, there exists a
polynomial $E_i$ with $\deg E_i\leq n$ such that
\(
x_i=\frac{E_i}{D}.
\)
Since
\(
\mathbf{x}(\mathbb D)
\subseteq
\Gamma_{E(3;3;1,1,1)},
\)
we have
\[
|x_i(z)|\leq1,
\quad z\in\mathbb D,
\quad 1\leq i\leq6.
\]
By continuity, the same inequality holds on
$\overline{\mathbb D}$. Consequently,
\[
|E_i(z)|\leq |D(z)|,
\quad z\in\overline{\mathbb D},
\quad 1\leq i\leq6.
\]
This proves $(1)$--$(4)$.

It remains to establish $(5)$. By Lemma~\ref{Lem 2}, for almost every
$z\in\mathbb T$,
\begin{equation}\label{Des 1}
x_i(z)
=
x_{7-i}^{\vee}(1/z)x_7(z),
\quad 1\leq i\leq6.
\end{equation}
Using \eqref{x_7}, we obtain
\begin{equation}\label{Des 2}
\frac{E_i(z)}{D(z)}
=
\frac{\overline{E_{7-i}(1/\overline z)}}
     {\overline{D(1/\overline z)}}
\frac{z^n\overline{D(1/\overline z)}}{D(z)}
\end{equation}
for almost every $z\in\mathbb T$. Hence
\[
E_i(z)
=
z^n\overline{E_{7-i}(1/\overline z)}
=
E_{7-i}^{\sim n}(z),
\quad z\in\mathbb T.
\]
Since both sides are polynomials, the identity theorem yields
\(
E_i=E_{7-i}^{\sim n},
 1\leq i\leq6.
\)
Thus $(5)$ holds.
We now prove the uniqueness assertion. Let
\(
\{E'_1,\ldots,E'_6,D'\}
\)
be another collection of polynomials satisfying $(1)$--$(5)$ and
representing the same function $\mathbf{x}$. Then
\begin{equation}\label{Des 4}
\frac{E_i}{D}
=
\frac{E'_i}{D'},
\quad 1\leq i\leq6,
\quad\text{and}\qquad
\frac{D^{\sim n}}{D}
=
\frac{D'^{\sim n}}{D'}.
\end{equation}
Since $D$ and $D'$ have no zeros in $\overline{\mathbb D}$ and
represent the same rational inner function $x_7$, they have the same
zeros in $\mathbb C\setminus\overline{\mathbb D}$, counted with
multiplicity. Hence there exists $t\in\mathbb C\setminus\{0\}$ such
that
\(
D'=tD.
\)
Substituting this into the second equality in \eqref{Des 4}, we get
\(
\frac{D^{\sim n}}{D}
=
\frac{\overline{t}\,D^{\sim n}}{tD}.
\)
Therefore $\overline t=t$, and hence
\(
t\in\mathbb R\setminus\{0\}.
\)
Finally, the first equality in \eqref{Des 4} gives
\(
\frac{E_i}{D}
=
\frac{E'_i}{tD},
\)
and consequently
\(
E'_i=tE_i,
 1\leq i\leq6.
\)
The converse is immediate: if
\[
D'=tD,\quad E'_i=tE_i,
\quad t\in\mathbb R\setminus\{0\},
\]
then $(1)$--$(5)$ remain valid and the resulting rational functions
are unchanged. This completes the proof.
\end{proof}

\begin{rem}\label{Rem 2}
Let
\(
\mathbf{x}=(x_1,\ldots,x_7)
\)
be a rational $\Gamma_{E(3;3;1,1,1)}$-inner function. By
Lemma~\ref{Lem 1}, for almost every $z\in\mathbb T$,
\[
x_i(z)=\overline{x_{7-i}(z)}x_7(z),
\quad 1\leq i\leq6,\quad \text{
and}\quad
|x_7(z)|=1.
\]
Consequently,
\(
|x_i(z)|=|x_{7-i}(z)|, 1\leq i\leq6,
\)
for almost every $z\in\mathbb T$.
\end{rem}
Theorem~\ref{Description 1} provides an explicit polynomial
representation of every component of a rational
$\Gamma_{E(3;3;1,1,1)}$-inner function. However, the conditions
$(1)$--$(5)$ in that theorem are, in general, only necessary
conditions; they do not by themselves guarantee that
\(
\mathbf{x}=(x_1,\ldots,x_7)
\)
is a rational $\Gamma_{E(3;3;1,1,1)}$-inner function. We next obtain
a partial converse.

Suppose that there exist polynomials
\(
E_1,\ldots,E_6,D
\)
satisfying conditions $(1)$, $(2)$, $(4)$, and $(5)$ of
Theorem~\ref{Description 1}, and define
\begin{equation}\label{construction Gamma 3}
x_i=\frac{E_i}{D},
\quad 1\leq i\leq6,
\quad
x_7=\frac{D^{\sim n}}{D}.
\end{equation}
By condition $(5)$,
\(
E_i=E_{7-i}^{\sim n},
 1\leq i\leq6.
\)
Consequently, the boundary relations
\[
|x_7(z)|=1,
\quad
x_i(z)=\overline{x_{7-i}(z)}x_7(z),
\quad 1\leq i\leq6,
\]
hold for almost every $z\in\mathbb T$. Thus, the boundary relations
necessary for membership in $K$ are
automatically satisfied.
It remains to verify that $\mathbf{x}$ maps $\mathbb D$ into
$\Gamma_{E(3;3;1,1,1)}$. By \cite[Theorem 2.39]{apal1}, it is
sufficient to verify one of the following conditions:
\begin{enumerate}
    \item
    \(
    (x_2(z),x_4(z),x_6(z))
    \in\Gamma_{E(2;2;1,1)}
    \)
    and
    \(
    \|\Psi^{(1)}(\cdot,\mathbf{x}(z))\|_{H^\infty(\mathbb D^2)}
    \leq1;
    \)

    \item
    \(
    (x_1(z),x_4(z),x_5(z))
    \in\Gamma_{E(2;2;1,1)}
    \)
    and
    \(
    \|\Psi^{(2)}(\cdot,\mathbf{x}(z))\|_{H^\infty(\mathbb D^2)}
    \leq1;
    \)

    \item
    \(
    (x_1(z),x_2(z),x_3(z))
    \in\Gamma_{E(2;2;1,1)}
    \)
    and
    \(
    \|\Psi^{(3)}(\cdot,\mathbf{x}(z))\|_{H^\infty(\mathbb D^2)}
    \leq1.
    \)
\end{enumerate}
We consider the first condition. For
$z_2,z_3\in\mathbb T$, if
\(
1-z_2x_2(z)-z_3x_4(z)+z_2z_3x_6(z)\neq0,
\)
then Corollary~\ref{Cor 1} gives
\[
\left|
\frac{
x_1(z)-z_2x_3(z)-z_3x_5(z)+z_2z_3x_7(z)}
{
1-z_2x_2(z)-z_3x_4(z)+z_2z_3x_6(z)
}
\right|
\leq1,
\]
for $z_2,z_3\in\overline{\mathbb D}$. Hence a necessary condition for
$\mathbf{x}$ to be a $\Gamma_{E(3;3;1,1,1)}$-inner function is
\begin{equation}\label{Necessary 1}
1-z_2x_2(z)-z_3x_4(z)+z_2z_3x_6(z)\neq0,
\quad z_2,z_3\in\mathbb T.
\end{equation}
By \cite[Definition 1.1]{Abouhajar}, condition \eqref{Necessary 1}
is equivalent to
\begin{equation}\label{Necessary 2}
(x_2(z),x_4(z),x_6(z))
\in\Gamma_{E(2;2;1,1)},
\quad z\in\mathbb T.
\end{equation}

Conversely, if \eqref{Necessary 2} holds, then the defining
characterization of $\Gamma_{E(2;2;1,1)}$ implies that the denominator
of $\Psi^{(1)}(\cdot,\mathbf{x}(z))$ has no zeros in
$\overline{\mathbb D}^2$. Together with
\[
|x_7(z)|=1,
\quad
x_i(z)=\overline{x_{7-i}(z)}x_7(z),
\quad 1\leq i\leq6,
\]
this yields
\(
\|\Psi^{(1)}(\cdot,\mathbf{x}(z))\|_{H^\infty(\mathbb D^2)}
\leq1.
\)
Similarly, we can verify $(2)$ and $(3)$.
The following theorem gives a partial converse to
Theorem~\ref{Description 1}.
\begin{thm}\label{Partial Converse 1}
Let $E_1,\ldots,E_6,D$ be polynomials satisfying conditions
$(1)$, $(2)$, $(4)$, and $(5)$ of Theorem~\ref{Description 1}.
Define
\begin{equation}\label{x_i, x_7}
x_i=\frac{E_i}{D},
\quad 1\leq i\leq6,
\quad
x_7=\frac{D^{\sim n}}{D}.
\end{equation}
Suppose that at least one of the following conditions holds:
\begin{enumerate}
    \item
    \(
    (x_2(z),x_4(z),x_6(z))
    \in\Gamma_{E(2;2;1,1)},
     z\in\mathbb T;
    \)

    \item
    \(
    (x_1(z),x_4(z),x_5(z))
    \in\Gamma_{E(2;2;1,1)},
    z\in\mathbb T;
    \)

    \item
    \(
    (x_1(z),x_2(z),x_3(z))
    \in\Gamma_{E(2;2;1,1)},
     z\in\mathbb T.
    \)
\end{enumerate}
Then
\(
\mathbf{x}=(x_1,\ldots,x_7)
\)
is a rational $\Gamma_{E(3;3;1,1,1)}$-inner function of degree at
most $n$.
\end{thm}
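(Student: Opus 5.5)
We will verify the two requirements of Definition~\ref{Defn 1} separately: the boundary condition ($\mathbf{x}(z)\in K$ for almost every $z\in\mathbb T$) and the interior condition ($\mathbf{x}(\mathbb D)\subseteq\Gamma_{E(3;3;1,1,1)}$). After that, Theorem~\ref{Thm 1} ((2)$\Rightarrow$(1)) and Definition~\ref{Defn:degree-Gamma} give the conclusion. Since $D$ has no zeros in $\overline{\mathbb D}$ by condition $(2)$, every $x_i$ in \eqref{x_i, x_7} is rational and holomorphic on a neighbourhood of $\overline{\mathbb D}$.

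\textbf{Step 1: boundary behaviour.} For $z\in\mathbb T$ we have $1/\overline z=z$. Hence $D^{\sim n}(z)=z^n\overline{D(z)}$, so $|x_7(z)|=1$. Thus $x_7=D^{\sim n}/D$ is a finite Blaschke product of degree at most $n$, so $\deg(\mathbf{x})\leq n$.

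Condition $(5)$ gives $E_i(z)=z^n\overline{E_{7-i}(z)}$ on $\mathbb T$. Therefore
\[
\overline{x_{7-i}(z)}\,x_7(z)=\frac{\overline{E_{7-i}(z)}}{\overline{D(z)}}\cdot\frac{z^n\overline{D(z)}}{D(z)}=\frac{E_i(z)}{D(z)}=x_i(z),\qquad 1\leq i\leq 6.
\]
Together with $|x_i|\leq1$ from condition $(4)$, these are exactly the relations defining $K$, except that we still need membership in $\Gamma_{E(3;3;1,1,1)}$. By Proposition~\ref{Prop K}, it suffices to know $\mathbf{x}(z)\in\Gamma_{E(3;3;1,1,1)}$ for $z\in\mathbb T$. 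This will follow from Step 2 by continuity, because $\Gamma_{E(3;3;1,1,1)}$ is closed.

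\textbf{Step 2: interior membership.} Suppose hypothesis (1) holds; the other two cases are symmetric, using $\Psi^{(2)},\Psi^{(3)}$. By \cite[Theorem 2.39]{apal1}, it suffices to show, for each $z\in\overline{\mathbb D}$, that
\[
(x_2(z),x_4(z),x_6(z))\in\Gamma_{E(2;2;1,1)}\quad\text{and}\quad \|\Psi^{(1)}(\cdot,\mathbf{x}(z))\|_{H^\infty(\mathbb D^2)}\leq1.
\]
On $\mathbb T$ both facts come from the discussion preceding the theorem. The triple lies in $\Gamma_{E(2;2;1,1)}$ by hypothesis, so the denominator of $\Psi^{(1)}$ has no zeros on $\overline{\mathbb D}^2$. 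The boundary relations from Step 1 then force the numerator to equal, up to the unimodular factor $x_7(z)$, the reflected denominator. Hence $|\Psi^{(1)}|\leq1$ (in fact it is a rational inner function of $(z_2,z_3)$).

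To pass from $\mathbb T$ into $\mathbb D$, we would rather not work with the triple directly. Instead we would use the structure of the domain through the maps
\[
z\mapsto \Psi^{(1)}(z_2,z_3,\mathbf{x}(z))
\]
for fixed $(z_2,z_3)\in\mathbb T^2$, together with the analogous characterization of $\Gamma_{E(2;2;1,1)}$ via the tetrablock functions $\frac{x_4-z_2x_6}{1-z_2x_2}$. For fixed parameters these are rational functions of $z$. If their denominators do not vanish on $\overline{\mathbb D}$, they are holomorphic there and bounded by $1$ on $\mathbb T$, so the maximum principle gives the bound $1$ on $\mathbb D$. The criterion \cite[Theorem 2.39]{apal1} (and \cite[Theorem 2.29]{apal1} for the tetrablock part) then yields $\mathbf{x}(z)\in\Gamma_{E(3;3;1,1,1)}$ for $z\in\mathbb D$.

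\textbf{Main obstacle.} The delicate point is the non-vanishing in $z\in\mathbb D$ of the denominators $1-z_2x_2(z)-z_3x_4(z)+z_2z_3x_6(z)$ and $1-z_2x_2(z)$. The hypothesis controls them only on $\mathbb T$. My first approach is an argument-principle / homotopy argument. Each of $x_2,x_4,x_6$ has modulus at most $1$ on $\overline{\mathbb D}$ by condition $(4)$, so scaling $(z_2,z_3)\mapsto(rz_2,rz_3)$ with $r<1$ gives a nonvanishing denominator, because
\[
|rz_2x_2+rz_3x_4-r^2z_2z_3x_6|<\ldots
\]
is controlled via $|x_6|\leq1$. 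Letting $r\to1$ while tracking the winding number along $z\in\mathbb T$, which is nonzero-valued by hypothesis, should exclude zeros inside $\mathbb D$. If this fails, the fallback is to invoke the tetrablock characterization $|x_4-\overline{x_2}x_6|+|x_2x_4-x_6|\leq1-|x_2|^2$ and subharmonicity arguments. I expect this step to require the most care.
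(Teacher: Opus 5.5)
Your route is essentially the paper's. You reduce via \cite[Theorem 2.39]{apal1} to the triple condition plus $\|\Psi^{(1)}(\cdot,\mathbf{x}(z))\|_{H^\infty(\mathbb D^2)}\le1$. You get the bound on $\mathbb T$ from $x_i=\overline{x_{7-i}}x_7$, and push it into $\mathbb D$ by the maximum principle for $z\mapsto\Psi^{(1)}(z_2,z_3,\mathbf{x}(z))$. You also correctly locate the weak point, which the paper's proof passes over: it infers holomorphy of this map on $\mathbb D$ merely from holomorphy of the $x_i$, and never checks the triple condition inside $\mathbb D$.

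\textbf{The gap.} You leave exactly that step open, and your sketch fails as written.
\begin{itemize}
\item Hypothesis (1) only places $(x_2(\zeta),x_4(\zeta),x_6(\zeta))$ in the \emph{closed} tetrablock. That forces $1-z_2x_2-z_3x_4+z_2z_3x_6\neq0$ only for $(z_2,z_3)\in\mathbb D^2$, not on $\overline{\mathbb D}^2$ as you state. The point $(1,0,0)$ gives the denominator $1-z_2$. In Example~\ref{Exam 1}(A) the denominator is $(1-z_2\psi)(1-z_3\varphi\psi)$, which vanishes at $\zeta\in\mathbb T$ when $z_2=\overline{\psi(\zeta)}$.
\item So fixing $(z_2,z_3)\in\mathbb T^2$ and letting $r\to1$ cannot control the endpoint $r=1$.
\item The bound $r|x_2|+r|x_4|+r^2|x_6|\le 2r+r^2$ from condition (4) secures non-vanishing only when $2r+r^2<1$, not for all $r<1$.
\end{itemize}

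\textbf{How to close it.} Treat the triple first, keeping the parameters in the open disc.
\begin{itemize}
\item For $w\in\mathbb D$, condition (4) gives $|wx_2|<1$ on $\overline{\mathbb D}$. Hence $z\mapsto (x_4(z)-wx_6(z))/(1-wx_2(z))$ is holomorphic near $\overline{\mathbb D}$.
\item This map is bounded by $1$ on $\mathbb T$, because the triple lies in $\Gamma_{E(2;2;1,1)}$ there. By the maximum principle it is bounded by $1$ on $\mathbb D$.
\item Together with $|x_4|\le1$, the $\Psi$-characterization of the closed tetrablock in \cite{Abouhajar} gives $(x_2(z),x_4(z),x_6(z))\in\Gamma_{E(2;2;1,1)}$ for all $z\in\overline{\mathbb D}$.
\item Consequently the denominator of $\Psi^{(1)}(z_2,z_3,\mathbf{x}(z))$ is zero-free for all $(z_2,z_3)\in\mathbb D^2$ and $z\in\overline{\mathbb D}$.
\item Now fix $(z_2,z_3)\in\mathbb D^2$. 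Your reflection argument bounds the modulus by $1$ on $\mathbb T$, and the maximum principle gives $|\Psi^{(1)}(z_2,z_3,\mathbf{x}(z))|\le1$ on $\mathbb D$.
\end{itemize}

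Your homotopy idea also works if it is run inside the open bidisc. The number of zeros in $\mathbb D$ of $z\mapsto1-w_2x_2(z)-w_3x_4(z)+w_2w_3x_6(z)$ is locally constant in $(w_2,w_3)\in\mathbb D^2$, since hypothesis (1) excludes zeros on $\mathbb T$. It equals $0$ at $(0,0)$, so no smallness estimate is needed.

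With this step filled in, the rest of your plan is sound. That includes obtaining $\mathbf{x}(\mathbb T)\subseteq\Gamma_{E(3;3;1,1,1)}$ by closedness before applying Proposition~\ref{Prop K}.
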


\begin{proof}
Without loss of generality, assume that
\(
(x_2(z),x_4(z),x_6(z))
\in\Gamma_{E(2;2;1,1)},
 z\in\mathbb T.
\)
We divide the proof into three steps.

\medskip

\noindent\textbf{Step 1: $x_7$ is rational inner.}
Since $D$ has no zeros in $\overline{\mathbb D}$, the function
\[
x_7=\frac{D^{\sim n}}{D}
\]
is a rational inner function. If $D$ and $D^{\sim n}$ have common
zeros on $\mathbb T$, the corresponding common factors may cancel.
Therefore,
\(
\deg(x_7)\leq n.
\)

\medskip

\noindent\textbf{Step 2: $\mathbf{x}$ has boundary values in $K$.}
By condition $(5)$ of Theorem~\ref{Description 1},
\(
E_i=E_{7-i}^{\sim n},
1\leq i\leq6.
\)
Consequently, by the same calculation as in the proof of
Theorem~\ref{Description 1},
\(
x_i(z)
=
\overline{x_{7-i}(z)}x_7(z),
1\leq i\leq6,
\)
for almost every $z\in\mathbb T$. Since $|x_7(z)|=1$ almost
everywhere on $\mathbb T$, it follows from
\cite[Theorem 3.14]{apal1} that
\(
\mathbf{x}(z)\in K
\)
for almost every $z\in\mathbb T.$

\medskip

\noindent\textbf{Step 3: $\mathbf{x}$ maps $\mathbb D$ into
$\Gamma_{E(3;3;1,1,1)}$.}
By \cite[Theorem 2.39]{apal1}, it suffices to prove
\begin{equation}\label{Con Des 2}
\|\Psi^{(1)}(\cdot,\mathbf{x}(z))\|_{H^\infty(\mathbb D^2)}
\leq1,
\quad z\in\mathbb D.
\end{equation}
Since
\(
\mathbf{x}(\zeta)\in K
\subseteq
\Gamma_{E(3;3;1,1,1)}
\)
for almost every $\zeta\in\mathbb T$, 
\cite[ Theorem~2.39]{apal1} gives
\[
|\Psi^{(1)}(z_2,z_3,\mathbf{x}(\zeta))|
\leq1
\]
for every fixed $z_2,z_3\in\overline{\mathbb D}$ and for almost every
$\zeta\in\mathbb T$.
Fix $z_2,z_3\in\overline{\mathbb D}$. Since
\(
(x_2(\zeta),x_4(\zeta),x_6(\zeta))
\in\Gamma_{E(2;2;1,1)},
 \zeta\in\mathbb T,
\)
the denominator of
\(
\Psi^{(1)}(z_2,z_3,\mathbf{x}(\zeta))
\)
does not vanish for $\zeta\in\mathbb T$. Since each $x_i$ is
holomorphic on $\mathbb D$, the function
\[
z\longmapsto
\Psi^{(1)}(z_2,z_3,\mathbf{x}(z))
\]
is holomorphic on $\mathbb D$. The maximum modulus principle
therefore yields
\[
|\Psi^{(1)}(z_2,z_3,\mathbf{x}(z))|
\leq1,
\quad z\in\mathbb D.
\]
Since $z_2,z_3\in\overline{\mathbb D}$ were arbitrary,
\eqref{Con Des 2} follows. Hence
\(
\mathbf{x}(\mathbb D)
\subseteq
\Gamma_{E(3;3;1,1,1)}.
\)
Thus $\mathbf{x}$ is a rational map from $\mathbb D$ into
$\Gamma_{E(3;3;1,1,1)}$ whose radial boundary values belong to $K$
almost everywhere on $\mathbb T$. Therefore $\mathbf{x}$ is a
rational $\Gamma_{E(3;3;1,1,1)}$-inner function. Its degree is
$\deg(x_7)\leq n$. This completes the proof.
\end{proof}

\begin{rem}\label{Rem 3}
Theorem~\ref{Partial Converse 1} can likewise be proved by assuming
that either
\(
(x_1(z),x_4(z),x_5(z))
\in\Gamma_{E(2;2;1,1)}
\)
or
\(
(x_1(z),x_2(z),x_3(z))
\in\Gamma_{E(2;2;1,1)}
\)
for every $z\in\mathbb T$. The proof is analogous to the one given
above.
\end{rem}

We next establish the corresponding description for rational
$\Gamma_{E(3;2;1,2)}$-inner functions.

\begin{thm}\label{Description 2}
Let
\(
\widetilde{\mathbf{x}}
=
(x_1,x_2,x_3,y_1,y_2)
\)
be a rational $\Gamma_{E(3;2;1,2)}$-inner function of degree $n$.
Then there exist polynomials
\(
E_1,E_2,F_1,F_2,D
\)
such that the following conditions hold:
\begin{enumerate}
    \item
    \(
    \deg E_i\leq n,
    \deg F_i\leq n,
     1\leq i\leq2,
    \deg D\leq n.
    \)

    \item
    \(
    D(z)\neq0,
     z\in\overline{\mathbb D}.
    \)

    \item
    \(
    x_i=\frac{E_i}{D},
    y_i=\frac{F_i}{D}, 1\leq i\leq2,
    x_3=\frac{D^{\sim n}}{D}
    \)
    on $\overline{\mathbb D}$.

    \item
    \(
    |E_1(z)|\leq|D(z)|,
    |E_2(z)|\leq2|D(z)|,
    |F_1(z)|\leq2|D(z)|,
    |F_2(z)|\leq|D(z)|,
    \)
    for all $z\in\overline{\mathbb D}$.

    \item
    \(
    E_i=F_{3-i}^{\sim n},
    1\leq i\leq2.
    \)
\end{enumerate}
Moreover, if
\(
\{E'_1,E'_2,F'_1,F'_2,D'\}
\)
is another collection of polynomials satisfying $(1)$--$(5)$ and
representing the same function
$\widetilde{\mathbf{x}}$, then there exists
$t\in\mathbb R\setminus\{0\}$ such that
\[
D'=tD,
\quad
E'_i=tE_i,
\quad
F'_i=tF_i,
\quad 1\leq i\leq2.
\]
Conversely, every such real non-zero scalar multiple also satisfies
$(1)$--$(5)$ and represents the same function
$\widetilde{\mathbf{x}}$.
\end{thm}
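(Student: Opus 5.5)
The plan is to follow the proof of Theorem~\ref{Description 1} step by step, with $x_3$ playing the role of $x_7$ and Lemma~\ref{Lem 4} and Proposition~\ref{Prop 2} replacing Lemma~\ref{Lem 2} and Proposition~\ref{Prop 1}.

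\textbf{The denominator.} By Lemma~\ref{Lem 3}, $x_3$ is a rational inner function. Its degree is $n$ by Definition~\ref{Defn:degree-Theta}. By \eqref{Description of inner function} there is a polynomial $D$ with $\deg D\leq n$, $D\neq0$ on $\overline{\mathbb D}$ and $x_3=D^{\sim n}/D$. This gives (2) and the $x_3$ part of (3).

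\textbf{The numerators.} Proposition~\ref{Prop 2}(2) shows that every finite pole of $x_1$ or $x_2$ is a pole of $x_3$ of at least the same multiplicity. For $y_1,y_2$ I would argue directly from Lemma~\ref{Lem 4}, rewritten as $y_2(z)=x_1^\vee(1/z)\,x_3^\vee(1/z)$ (using $x_3x_3^\vee(1/\cdot)=1$), or equivalently by applying the same pole comparison to the rational $\Gamma_{E(2;2;1,1)}$-inner triples $(x_1,y_2,x_3)$ and $(x_2/2,y_1/2,x_3)$ from Lemma~\ref{Lem 10}, where the roles of the first two coordinates are symmetric. In either route, all poles of $x_1,x_2,y_1,y_2$ are absorbed by $D$. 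The degree bound $\deg\leq n$ for the numerators comes from the behaviour at $\infty$ in Proposition~\ref{Prop 2}(2)(b). So $x_i=E_i/D$ and $y_i=F_i/D$ with $\deg E_i,\deg F_i\leq n$, which is (1) and (3).

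\textbf{The bounds.} Condition (4) follows from $\widetilde{\mathbf{x}}(\mathbb D)\subseteq\Gamma_{E(3;2;1,2)}$, which gives $|x_1|\leq1$, $|y_2|\leq1$ and $|x_2|\leq2$, $|y_1|\leq2$ on $\mathbb D$. These are the coordinate bounds of the domain, seen for instance through Lemma~\ref{Lem 10}, since $(x_1,y_2,x_3)$ and $(x_2/2,y_1/2,x_3)$ lie in $\Gamma_{E(2;2;1,1)}$. Continuity extends the bounds to $\overline{\mathbb D}$; multiplying by $|D|$ gives (4).

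\textbf{The reflection relations.} For (5), substitute the representations into Lemma~\ref{Lem 4} on $\mathbb T$:
\[
\frac{E_1(z)}{D(z)}=\frac{\overline{F_2(1/\overline z)}}{\overline{D(1/\overline z)}}\cdot\frac{z^n\overline{D(1/\overline z)}}{D(z)} .
\]
This gives $E_1=F_2^{\sim n}$ on $\mathbb T$, and the same computation gives $E_2=F_1^{\sim n}$. Both sides are polynomials, so the identities hold everywhere.

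\textbf{Uniqueness.} $D$ and $D'$ have the same zeros (all outside $\overline{\mathbb D}$) since they represent the same Blaschke product $x_3$, so $D'=tD$. Then $D^{\sim n}/D=\overline t D^{\sim n}/(tD)$ forces $t\in\mathbb R\setminus\{0\}$. The other polynomials scale by the same $t$ because the quotients are fixed. The converse is immediate, since $(tF)^{\sim n}=tF^{\sim n}$ for real $t$.

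\textbf{Main obstacle.} The delicate step is the pole and degree control for the numerators, namely ensuring that $D$ of degree at most $n$ serves as a common denominator of exact degree bound $n$. This needs the behaviour at $\infty$ and possible cancellation between $D$ and $D^{\sim n}$, exactly as in Theorem~\ref{Description 1}. If the direct pole argument for $y_1,y_2$ is awkward, I would fall back on Lemma~\ref{Lem 10} and the analogous $\Gamma_{E(2;2;1,1)}$ description in \cite{Alsalhi}.
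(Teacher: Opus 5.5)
Your proposal is correct and follows the paper's proof. The steps match: write $x_3=D^{\sim n}/D$, compare poles to put every component over $D$ with numerators of degree at most $n$, take (4) from the coordinate bounds, get (5) by substituting into the boundary relations on $\mathbb T$, and use the same real-scaling argument for uniqueness.

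Two small points need fixing.

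\begin{itemize}
\item The reflected identity is wrong as written. The correct form is $y_2(z)=x_1^\vee(1/z)\,x_3(z)$, coming from $y_2=\overline{x_1}\,x_3$ on $\mathbb T$, and in fact it gives $F_2=E_1^{\sim n}$ directly. The formula you wrote, $x_1^\vee(1/z)\,x_3^\vee(1/z)$, equals $x_1^\vee(1/z)/x_3(z)$.
\item The cancellation between $D$ and $D^{\sim n}$ that you flag as the main obstacle cannot occur. All zeros of $D^{\sim n}$ lie in $\mathbb D$, while $D$ has none in $\overline{\mathbb D}$.
\end{itemize}
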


\begin{proof}
The proof is analogous to that of Theorem~\ref{Description 1}. By
Lemma~\ref{Lem 3}, $x_3$ is a rational inner function of degree $n$.
Hence, by \eqref{Description of inner function},
\(
x_3=\frac{D^{\sim n}}{D},
\)
where $D$ has degree at most $n$ and has no zeros in
$\overline{\mathbb D}$.
By the pole relations for the components of a rational
$\Gamma_{E(3;2;1,2)}$-inner function, each of $x_1,x_2,y_1,y_2$
has poles contained among those of $x_3$, with no larger
multiplicities. Consequently, there exist polynomials
$E_1,E_2,F_1,F_2$ of degree at most $n$ such that
\[
x_i=\frac{E_i}{D},
\quad
y_i=\frac{F_i}{D},
\quad 1\leq i\leq2.
\]
The bounds in $(4)$ follow from
\[
|x_1|\leq1,\quad
|x_2|\leq2,\quad
|y_1|\leq2,\quad
|y_2|\leq1
\]
on $\overline{\mathbb D}$.
Finally, Lemma~\ref{Lem 3} gives, almost everywhere on $\mathbb T$,
\[
x_1=\overline{y_2}\,x_3,
\quad
x_2=\overline{y_1}\,x_3,
\quad
|x_3|=1.
\]
Substituting the polynomial representations and using the
representation of $x_3$ gives
\[
E_1=F_2^{\sim n},
\qquad
E_2=F_1^{\sim n}.
\]
Thus $(5)$ holds.
The uniqueness assertion follows exactly as in the proof of
Theorem~\ref{Description 1}. Indeed, two denominators representing
the same rational inner function $x_3$ differ by a non-zero scalar.
The self-inversive relation in $(5)$ forces this scalar to be real,
and the representations of $x_1,x_2,y_1,y_2$ then imply the
corresponding scalar relations for $E_i$ and $F_i$.
\end{proof}

\begin{rem}\label{Rem 4}
Let
\(
\widetilde{\mathbf{x}}
=
(x_1,x_2,x_3,y_1,y_2)
\)
be a rational $\Gamma_{E(3;2;1,2)}$-inner function. By
Lemma~\ref{Lem 3}, for almost every $z\in\mathbb T$,
\[
x_1(z)=\overline{y_2(z)}x_3(z),
\quad
x_2(z)=\overline{y_1(z)}x_3(z),
\quad
|x_3(z)|=1.
\]
Consequently,
\(
|x_1(z)|=|y_2(z)|,
|x_2(z)|=|y_1(z)|
\)
for almost every $z\in\mathbb T$.
\end{rem}
Theorem~\ref{Description 2} gives an explicit polynomial
representation of each component of a rational
$\Gamma_{E(3;2;1,2)}$-inner function. As in the preceding case, the
conditions $(1)$--$(5)$ are necessary but are not, in general,
sufficient to guarantee that
\[
\widetilde{\mathbf{x}}
=
(x_1,x_2,x_3,y_1,y_2)
\]
is a rational $\Gamma_{E(3;2;1,2)}$-inner function. We therefore
proceed to obtain a partial converse.
Let
\(
E_1,E_2,F_1,F_2,D
\)
be polynomials satisfying conditions $(1)$, $(2)$, $(4)$, and $(5)$
of Theorem~\ref{Description 2}, and define
\begin{equation}\label{construction Gamma 32}
x_i=\frac{E_i}{D},
\quad
y_i=\frac{F_i}{D},
\quad 1\leq i\leq2,
\quad
x_3=\frac{D^{\sim n}}{D}.
\end{equation}
By condition $(5)$,
\(
E_1=F_2^{\sim n},
E_2=F_1^{\sim n}.
\)
Consequently,
\[
x_1(z)=\overline{y_2(z)}x_3(z),
\quad
x_2(z)=\overline{y_1(z)}x_3(z),
\quad
|x_3(z)|=1
\]
for almost every $z\in\mathbb T$. Hence
\(
\widetilde{\mathbf{x}}(z)\in K_1
\)
for almost every $z\in\mathbb T$.
It remains to verify that
\[
\widetilde{\mathbf{x}}:\mathbb D
\longrightarrow
\Gamma_{E(3;2;1,2)}.
\]
By \cite[Theorem 3.5]{Bharali} and
\cite[Proposition 2.46]{apal1}, it is sufficient to verify that
\(
(y_1(z),y_2(z))
\in
\Gamma_{E(2;1;2)}
\)
and
\(
\|\Psi_3(\cdot,\widetilde{\mathbf{x}}(z))\|_{H^\infty(\mathbb D)}
\leq1,
 z\in\mathbb D.
\)
If
\(
1-wy_1(z)+w^2y_2(z)\neq0,
 w\in\mathbb T,
\)
then Corollary~\ref{Cor 1} yields
\[
\left|
\frac{x_3(z)w^2-x_2(z)w+x_1(z)}
{y_2(z)w^2-y_1(z)w+1}
\right|
\leq1,
\quad w\in\overline{\mathbb D}.
\]
Therefore, a necessary condition for
$\widetilde{\mathbf{x}}$ to be a
$\Gamma_{E(3;2;1,2)}$-inner function is
\begin{equation}\label{Necessary 3}
y_2(z)w^2-y_1(z)w+1\neq0,\quad
w\in\mathbb T.
\end{equation}
By the characterization of
$\Gamma_{E(2;1;2)}$, condition \eqref{Necessary 3} is equivalent to
\begin{equation}\label{Necessary 3'}
(y_1(z),y_2(z))
\in
\Gamma_{E(2;1;2)},
\quad z\in\mathbb T.
\end{equation}

Conversely, if \eqref{Necessary 3'} holds, then the defining
characterization of $\Gamma_{E(2;1;2)}$ implies that the denominator
of $\Psi_3$ has no zeros in $\mathbb D$. Together with
\[
x_1(z)=\overline{y_2(z)}x_3(z),
\quad
x_2(z)=\overline{y_1(z)}x_3(z),
\quad
|x_3(z)|=1,
\]
this yields
\(
\|\Psi_3(\cdot,\widetilde{\mathbf{x}}(z))\|_{H^\infty(\mathbb D)}
\leq1.
\)
This leads to the following partial converse.

\begin{thm}\label{Partial Converse 2}
Let $E_1,E_2,F_1,F_2,D$ be polynomials satisfying conditions
$(1)$, $(2)$, $(4)$, and $(5)$ of Theorem~\ref{Description 2}.
Define
\begin{equation}\label{x_i, y_i, x_3}
x_i=\frac{E_i}{D},
\quad
y_i=\frac{F_i}{D},
\quad 1\leq i\leq2,
\quad
x_3=\frac{D^{\sim n}}{D}.
\end{equation}
If
\(
(y_1(z),y_2(z))
\in
\Gamma_{E(2;1;2)}, z\in\mathbb T,
\)
then
\(
\widetilde{\mathbf{x}}
=
(x_1,x_2,x_3,y_1,y_2)
\)
is a rational $\Gamma_{E(3;2;1,2)}$-inner function of degree at most
$n$.
\end{thm}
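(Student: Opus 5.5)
The plan follows the proof of Theorem~\ref{Partial Converse 1}, in three steps: $x_3$ is rational inner, the boundary values lie in $K_1$, and $\widetilde{\mathbf{x}}$ maps $\mathbb D$ into $\Gamma_{E(3;2;1,2)}$.

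\textbf{Step 1.} Since $D$ has no zeros in $\overline{\mathbb D}$ and $\deg D\leq n$, the quotient $x_3=D^{\sim n}/D$ has no poles in $\overline{\mathbb D}$. Moreover $|D^{\sim n}(z)|=|D(z)|$ on $\mathbb T$, so $x_3$ is a rational inner function. Possible cancellation of common factors of $D$ and $D^{\sim n}$ (zeros on $\mathbb T$ cannot occur since $D\neq0$ there, but reflected zeros may) gives $\deg x_3\leq n$.

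\textbf{Step 2.} For $z\in\mathbb T$ we have $\overline{F(z)}=z^{-n}F^{\sim n}(z)$ for any polynomial $F$ of degree at most $n$. Using condition $(5)$, $E_1=F_2^{\sim n}$ and $E_2=F_1^{\sim n}$, we compute on $\mathbb T$
\[
\overline{y_2(z)}\,x_3(z)=\frac{z^{-n}F_2^{\sim n}(z)}{z^{-n}D^{\sim n}(z)}\cdot\frac{D^{\sim n}(z)}{D(z)}=\frac{E_1(z)}{D(z)}=x_1(z),
\]
and in the same way $x_2=\overline{y_1}\,x_3$ on $\mathbb T$. With $|x_3|=1$ on $\mathbb T$, Proposition~\ref{Prop K_1} then places $\widetilde{\mathbf{x}}(\zeta)$ in $K_1$, once we know that $\widetilde{\mathbf{x}}(\zeta)\in\Gamma_{E(3;2;1,2)}$. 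That membership comes from the criterion of \cite[Theorem 3.5]{Bharali} and \cite[Proposition 2.46]{apal1}, namely $(y_1,y_2)\in\Gamma_{E(2;1;2)}$ together with $\|\Psi_3\|_{H^\infty(\mathbb D)}\leq1$, applied at $\zeta\in\mathbb T$. The first condition is the hypothesis. For the second, the hypothesis makes the denominator $y_2(\zeta)w^2-y_1(\zeta)w+1$ nonvanishing for $w\in\mathbb D$. On $|w|=1$ the boundary relations give
\[
x_3w^2-x_2w+x_1=x_3w^2\,\overline{(y_2w^2-y_1w+1)},
\]
so $|\Psi_3(w,\widetilde{\mathbf{x}}(\zeta))|=1$ there. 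The maximum principle in $w$ then gives $\|\Psi_3(\cdot,\widetilde{\mathbf{x}}(\zeta))\|\leq1$.

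\textbf{Step 3.} To get $\widetilde{\mathbf{x}}(\mathbb D)\subseteq\Gamma_{E(3;2;1,2)}$, I use the same criterion at interior points.
\begin{itemize}
\item First I show $(y_1(z),y_2(z))\in\Gamma_{E(2;1;2)}$ for $z\in\mathbb D$. A convenient route is to note that $(y_1/2,y_1/2,y_2)$ (or $(x_1,y_2,x_3)$, cf. Lemma~\ref{Lem 10}) is controlled by the symmetrized bidisc criterion $|y_1-\overline{y_1}y_2|+|y_2|^2\leq1$, i.e. by $\sup_{w\in\mathbb T}|(2wy_2-y_1)/(2-wy_1)|\leq1$. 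For fixed $w\in\mathbb T$ this is a holomorphic function of $z$ that is at most $1$ in modulus on $\mathbb T$ by the hypothesis. It is holomorphic on $\mathbb D$ because $|y_1|\leq2$ by condition $(4)$; if $|y_1|=2$ somewhere one perturbs $w$ into $\mathbb D$. The maximum principle in $z$ then gives the bound inside.
\item Next, fix $w\in\overline{\mathbb D}$. The function $z\mapsto\Psi_3(w,\widetilde{\mathbf{x}}(z))$ is holomorphic on $\mathbb D$, since the denominator does not vanish by the first bullet and \eqref{Necessary 3}, and it is continuous up to $\mathbb T$, where it has modulus at most $1$ by Step 2. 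The maximum modulus principle gives $|\Psi_3(w,\widetilde{\mathbf{x}}(z))|\leq1$ for $z\in\mathbb D$, hence $\|\Psi_3(\cdot,\widetilde{\mathbf{x}}(z))\|_{H^\infty(\mathbb D)}\leq1$.
\end{itemize}
The criterion then yields $\widetilde{\mathbf{x}}(z)\in\Gamma_{E(3;2;1,2)}$ for $z\in\mathbb D$. Combined with Steps 1 and 2, $\widetilde{\mathbf{x}}$ is a rational $\Gamma_{E(3;2;1,2)}$-inner function of degree $\deg x_3\leq n$.

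The main obstacle is Step 3: propagating the boundary hypothesis $(y_1,y_2)\in\Gamma_{E(2;1;2)}$ into $\mathbb D$ while keeping the denominator of $\Psi_3$ nonvanishing. This is exactly where a maximum-principle argument needs holomorphy, so degenerate points where the denominator touches zero on $\overline{\mathbb D}$ must be handled by a limiting argument (replace $w$ by $rw$, $r<1$, and let $r\to1$).
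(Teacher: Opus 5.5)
Your proposal is correct and takes essentially the same route as the paper: the boundary relations come from condition $(5)$, and then the criterion of \cite[Theorem 3.5]{Bharali} and \cite[Proposition 2.46]{apal1} is combined with the maximum modulus principle in $z$ for each fixed $w$, as in the proof of Theorem~\ref{Partial Converse 1}. Your two extra steps fill in details the paper leaves implicit: checking $\widetilde{\mathbf{x}}(\zeta)\in\Gamma_{E(3;2;1,2)}$ on $\mathbb T$ before concluding $\widetilde{\mathbf{x}}(\zeta)\in K_1$, and propagating $(y_1,y_2)\in\Gamma_{E(2;1;2)}$ from $\mathbb T$ into $\mathbb D$ using $|y_1|\le 2$ from condition $(4)$, so that the denominator of $\Psi_3$ cannot vanish; in Step 3 both are cleanest if you take $w\in\mathbb D$ only, which is all the $H^\infty(\mathbb D)$ norm requires.
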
	
	
\section{Interpolation by Rational $\Gamma_{E(3;3;1,1,1)}$-Inner and
Rational $\Gamma_{E(3;2;1,2)}$-Inner Functions}\label{Interpolation}

In this section, we investigate interpolation problems for the domains
$G_{E(3;3,1,1,1)}$ and $G_{E(3;2;1,2)}$ and establish the existence of
rational $\Gamma_{E(3;3,1,1,1)}$-inner and
$\Gamma_{E(3;2;1,2)}$-inner interpolating functions. We first recall
some consequences of the classical Nevanlinna--Pick interpolation
theorem and its matrix-valued counterpart.

\begin{thm}[{\cite{McCarthy 1}}]\label{Interpolation in M_n(C)}
Let $z_1,\ldots,z_m$ be distinct points in $\mathbb D$, and let
$W_1,\ldots,W_m\in M_n(\mathbb C)$. Then there exists a matrix-valued
Schur function
\(
F:\mathbb D\longrightarrow M_n(\mathbb C)
\)
such that
\[
F(z_j)=W_j,\quad 1\leq j\leq m,
\]
if and only if the block Pick matrix
\[
\left(
\frac{I_n-W_iW_j^*}{1-z_i\overline{z}_j}
\right)_{i,j=1}^{m}
\]
is positive semidefinite.
\end{thm}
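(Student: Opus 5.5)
The plan is the standard operator-theoretic proof: necessity comes from positivity of the de Branges--Rovnyak kernel of a Schur function, and sufficiency from a ``lurking isometry'' argument that produces a transfer-function (unitary colligation) realization of the interpolant.

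\emph{Necessity.} Let $F$ be a Schur function, so $\|F(z)\|\leq 1$ on $\mathbb D$. Then the multiplication operator $M_F$ on $H^2\otimes\mathbb C^n$ is a contraction. Let $k_w(z)=(1-z\overline w)^{-1}$ be the Szeg\H{o} kernel. One checks on reproducing kernels that
\[
M_F^*(k_w\otimes\xi)=k_w\otimes F(w)^*\xi .
\]
Since $I-M_FM_F^*\geq 0$, testing it against $\sum_j k_{z_j}\otimes\xi_j$ gives
\[
\sum_{i,j}\Big\langle \frac{(I_n-W_iW_j^*)\,\xi_j}{1-z_i\overline z_j},\,\xi_i\Big\rangle\geq 0 .
\]
This is exactly positive semidefiniteness of the block Pick matrix.

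\emph{Sufficiency.} Suppose the block Pick matrix is positive semidefinite.
\begin{itemize}
\item Factor it as a Gram matrix: there are a finite-dimensional Hilbert space $\mathcal M$ and operators $H_i:\mathcal M\to\mathbb C^n$ with
\[
\frac{I_n-W_iW_j^*}{1-z_i\overline z_j}=H_iH_j^* .
\]
\item Rearrange this to
\[
I_n+z_i\overline z_j\,H_iH_j^*=W_iW_j^*+H_iH_j^* .
\]
The rearranged identity says precisely that
\[
V:\begin{pmatrix}\xi\\ \overline z_jH_j^*\xi\end{pmatrix}\longmapsto\begin{pmatrix}W_j^*\xi\\ H_j^*\xi\end{pmatrix},\qquad \xi\in\mathbb C^n,\ 1\leq j\leq m,
\]
extends linearly to a well-defined isometry between two subspaces of $\mathbb C^n\oplus\mathcal M$.
\item After enlarging $\mathcal M$ if necessary, so that the two orthogonal complements have equal dimension, extend $V$ to a unitary
\[
U=\begin{pmatrix}A&B\\ C&D\end{pmatrix}\ \text{on}\ \mathbb C^n\oplus\mathcal M .
\]
\item Define
\[
G(z)=A+zB(I-zD)^{-1}C,\qquad F(z):=G(\overline z)^{*} .
\]
Equivalently, realize $F$ directly from $U^*$ as a transfer function.
\end{itemize}

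\emph{Verification.} Unitarity of $U$ gives
\[
I-G(z)^*G(z)=(1-|z|^2)\,\Phi(z)^*\Phi(z)\geq 0
\]
for a suitable $\Phi$, so $\|F\|_\infty\leq 1$ and $F$ is a Schur function. Writing out the two block rows of $V$ at $z=\overline z_j$ and eliminating the $\mathcal M$-component shows that $G(\overline z_j)=W_j^*$, that is, $F(z_j)=W_j$.

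The step I expect to be the main obstacle is the lurking-isometry construction. It has three delicate points: checking that $V$ is well defined (which is exactly what the Gram identity guarantees), handling the dimension bookkeeping needed to extend $V$ to a unitary, and tracking conjugations and adjoints carefully so that the realization interpolates $W_j$ rather than $W_j^*$ and evaluates at $z_j$ rather than $\overline z_j$. As a fallback, sufficiency also follows from the commutant lifting theorem. There one applies it to the compression of $M_z^*\otimes I_n$ to $\operatorname{span}\{k_{z_j}\otimes\xi\}$ together with the operator $k_{z_j}\otimes\xi\mapsto k_{z_j}\otimes W_j^*\xi$. This operator commutes with the compression and is a contraction precisely by the Pick positivity.
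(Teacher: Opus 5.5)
The paper gives no proof of this statement: it quotes the matrix Nevanlinna--Pick theorem from the literature and uses it as a black box, so there is no in-paper argument to compare against. Your proof is the standard lurking-isometry (unitary realization) argument, and it is correct as written.
\begin{itemize}
\item The necessity computation produces exactly the quadratic form $\sum_{i,j}\langle P_{ij}\xi_j,\xi_i\rangle$ of the block Pick matrix $P$.
\item The Gram identity gives both well-definedness and isometry of $V$.
\item Eliminating the $\mathcal M$-component of $U(\xi,\overline z_jH_j^*\xi)=(W_j^*\xi,H_j^*\xi)$ gives $G(\overline z_j)=W_j^*$.
\item $F(z)=G(\overline z)^*=A^*+zC^*(I-zD^*)^{-1}B^*$ is the holomorphic transfer function of $U^*$.
\end{itemize}

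One of the delicate points you flag is vacuous. Since $\mathcal M$ is finite-dimensional, the domain and range of the isometry $V$ have the same dimension, so their orthogonal complements in $\mathbb C^n\oplus\mathcal M$ automatically do too, and no enlargement is needed.

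It is actually better not to enlarge, because then your construction proves more than the statement. From $\|u\|^2-\|G(z)u\|^2=(1-|z|^2)\|(I-zD)^{-1}Cu\|^2$, $G(z)$, and hence $F(z)$, is unitary at every $z\in\mathbb T$ where $I-zD$ is invertible. A rational matrix function bounded on $\mathbb D$ has no poles on $\mathbb T$, so by continuity this holds at every point of $\mathbb T$. Thus $F$ is a rational matrix inner interpolant, with a state space that can be taken of dimension $\operatorname{rank}P\leq mn$.

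This is exactly the stronger property the paper later invokes without proof, when it asserts the existence of a rational matrix inner $F_0$ in the interpolation argument of Section~\ref{Interpolation}. Your commutant-lifting fallback would only give a Schur interpolant and would not deliver this.
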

We shall also use the following interpolation result.
\begin{thm}[{\cite[Theorem 1.1]{costara}}]\label{Inter}
Let $W_1,\ldots,W_m\in M_n(\mathbb C)$ satisfy
\[
r(W_j)<1,\quad 1\leq j\leq m,
\]
where $r(\cdot)$ denotes the spectral radius. Then there exists a
holomorphic function
\(
G:\mathbb D\longrightarrow M_n(\mathbb C)
\)
such that
\(
\sup_{z\in\mathbb D}\|G(z)\|<1
\)
and
\(
G(z_j)=X_jW_jX_j^{-1},
1\leq j\leq m,
\)
for some invertible matrices $X_1,\ldots,X_m\in M_n(\mathbb C)$.
\end{thm}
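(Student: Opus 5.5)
The statement above is quoted from \cite[Theorem 1.1]{costara}, and in the paper it is taken as a black box. If I had to prove it, I would argue in two stages: first make every $W_j$ nonderogatory, then interpolate characteristic polynomials rather than matrices. The point is that the similarity freedom in $X_j$ only fixes the \emph{conjugacy class} of $G(z_j)$. So it is enough to build a strictly contractive analytic $G$ with $G(z_j)$ similar to $W_j$ for every $j$. The data are not constrained entrywise, so no Pick-matrix positivity condition on the $W_j$ themselves is needed. The points $z_1,\ldots,z_m$ are tacitly assumed distinct, as in Theorem~\ref{Interpolation in M_n(C)}.

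\textbf{Stage 1.} When $W_j$ is nonderogatory, $W_j$ is similar to the companion matrix $C(p_j)$ of its characteristic polynomial $p_j(\lambda)=\lambda^n+c_{j,1}\lambda^{n-1}+\cdots+c_{j,n}$, whose roots lie in $\mathbb D$. The plan is then to find analytic functions $a_1,\ldots,a_n$ on $\mathbb D$ with $a_k(z_j)=c_{j,k}$ such that
\[
P_z(\lambda)=\lambda^n+a_1(z)\lambda^{n-1}+\cdots+a_n(z)
\]
has all roots in a fixed disc of radius $\rho<1$ for every $z\in\mathbb D$. I would do this by working in the symmetrized polydisc. The map $(\lambda_1,\ldots,\lambda_n)\mapsto$ (elementary symmetric functions) sends $\mathbb D^n$ onto the domain of monic polynomials with roots in $\mathbb D$. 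Choose preimages $\Lambda_j\in(\rho\mathbb D)^n$ of the coefficient vectors $(c_{j,k})_k$. Interpolate each coordinate by a holomorphic map $\mathbb D\to\rho'\mathbb D$ with $\rho'<1$; this is possible by scalar Nevanlinna--Pick once the target radius is enlarged toward $1$ while staying below it, or alternatively by composing with a polynomial interpolant and shrinking. Then apply the symmetrization map. Next I would conjugate the analytic companion matrix $C(P_z)$ by the fixed diagonal matrix $\operatorname{diag}(1,t,\ldots,t^{n-1})$ with $t$ small. By the standard Rota-type estimate, this gives an analytic function whose norm is at most $\rho'+O(t)<1$ uniformly on $\mathbb D$, because the coefficients $a_k$ are uniformly bounded. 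This function is $G$, and $G(z_j)$ is similar to $C(p_j)$, hence to $W_j$.

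\textbf{Stage 2 (the main obstacle).} In general the $W_j$ are derogatory, and a single companion matrix cannot reproduce a Jordan structure with repeated blocks for the same eigenvalue. Here I would first put each $W_j$ in rational canonical form, $W_j\sim C(q_{j,1})\oplus\cdots\oplus C(q_{j,s_j})$, with $q_{j,i}$ dividing $q_{j,i+1}$. I would then try to build $G$ in block upper triangular form: diagonal blocks are analytic companion matrices as in Stage 1, and off-diagonal blocks are analytic functions chosen to \emph{vanish} at the $z_j$ where the blocks must decouple. The difficulty is that the block sizes vary with $j$. The fallback I would try first is a perturbation step. For each $j$, find a nonderogatory matrix $W_j'$ arbitrarily close to $W_j$ together with an analytic path of strict contractions joining them. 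Then use the extra freedom of choosing $G$ to vanish to order two at the $z_j$ in the off-diagonal part, forcing the exact Jordan type at $z_j$ while the uniform norm bound survives for small parameters. Controlling the norm while imposing these exact degeneracies is where I expect the real work to lie.
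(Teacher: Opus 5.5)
The paper gives no proof of this statement; it is quoted from Costara. As transcribed, however, the statement is false, and your proposal fails exactly where you assert that ``no Pick-matrix positivity condition on the $W_j$ themselves is needed.'' Similarity preserves the spectrum, and spectral data carry Pick constraints. If $\sup_{z}\|G(z)\|<1$, then $\det G$ is holomorphic with $|\det G|\le\|G\|^n<1$ and $\det G(z_j)=\det W_j$. So the scalar Pick matrix $\bigl((1-\overline{\det W_j}\,\det W_i)/(1-\overline{z_j}z_i)\bigr)_{i,j=1}^m$ must be positive semidefinite. For $n=1$ the similarity freedom is empty. Take $z_1=0$, $z_2=\tfrac12$, $W_1=0$, $W_2=\tfrac{9}{10}$: the Schwarz lemma forces $|G(\tfrac12)|\le\tfrac12$, so no $G$ exists. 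Correspondingly, your Stage~1 step of interpolating each eigenvalue coordinate by a map $\mathbb D\to\rho'\mathbb D$ ``by scalar Nevanlinna--Pick once the target radius is enlarged toward $1$'' is not available. Enlarging the radius cannot repair a violated Schwarz--Pick inequality, and ``polynomial interpolant and shrinking'' changes the values at the nodes. A solvability hypothesis is indispensable. For example, one can assume a holomorphic $F$ with $F(z_j)=W_j$ and $\sup_z r(F(z))<1$, which is what a Bercovici--Foias--Tannenbaum-type similarity argument starts from. In the paper's own use of the result, $W_j=\epsilon f(z_j)$ with $\|f(z)\|<1$, so $G=\epsilon f$ and $X_j=I$ work trivially.

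Even with that hypothesis added, Stage~1 would not go through, for two independent reasons. First, lifting coordinatewise to $\mathbb D^n$ is strictly stronger than solvability, because eigenvalues of a holomorphic matrix function need not be holomorphic. Let $F(z)=\bigl(\begin{smallmatrix}0&1\\ cz&0\end{smallmatrix}\bigr)$ with $0<c<1$. It satisfies $\sup_z r(F(z))=\sqrt c<1$ and takes nonderogatory values at $0$ and at $a$. Your preimages are forced, up to order, to be $(0,0)$ and $(\sqrt{ca},-\sqrt{ca})$. A Schur function $g$ with $g(0)=0$ and $|g(a)|=\sqrt{c|a|}$ requires $\sqrt{c|a|}\le|a|$, which fails when $|a|<c$. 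The correct intermediate target is the symmetrized polydisc, not $\mathbb D^n$. Second, the Rota-type estimate for the companion matrix is false. A diagonal similarity leaves diagonal entries unchanged, and $C(P_z)$ has $-a_1(z)$, the sum of the roots of $P_z$, on its diagonal. Hence $\|DC(P_z)D^{-1}\|\ge|a_1(z)|$, which exceeds $1$, for instance, when $n=2$ and both roots equal $0.9$. Rota's argument needs a Schur triangularization first, and that is not holomorphic in $z$ in general. Stage~2, which you yourself leave as an unproved plan for the derogatory case, adds a further gap. So the proposal does not yield a proof, and no proof can exist without the missing hypothesis.
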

We begin with a scalar interpolation consequence of the
Schwarz-type functions associated with
$\Gamma_{E(3;3,1,1,1)}$.

\begin{thm}\label{Interpolation 1}
Let
\(
h=(x_1,\ldots,x_7):\mathbb D\longrightarrow
G_{E(3;3,1,1,1)}
\)
be holomorphic, and let
\[
h(z_j)
=
(x_{1,j},x_{2,j},x_{3,j},x_{4,j},x_{5,j},x_{6,j},x_{7,j}),
\quad 1\leq j\leq N,
\]
where $z_1,\ldots,z_N$ are distinct points in $\mathbb D$.
For fixed $a,b\in\mathbb D$, define
\[
\Psi_{a,b}^{(1)}(\mathbf{x})
=
\frac{
x_1-a x_2-bx_5+abx_7
}{
1-a x_3-bx_4+abx_6
}.
\]
Then
\(
\Psi_{a,b}^{(1)}\circ h:\mathbb D\longrightarrow\mathbb D.
\)
Consequently, for every $a,b\in\mathbb D$,
\[
\left(
\frac{
1-\overline{\Psi_{a,b}^{(1)}(h(z_j))}
\Psi_{a,b}^{(1)}(h(z_i))
}{
1-\overline{z_j}z_i
}
\right)_{i,j=1}^{N}
\geq 0.
\]
Similarly, if
\[
\Psi_{a,b}^{(2)}(\mathbf{x})
=
\frac{
x_2-a x_1-bx_6+abx_7
}{
1-a x_3-bx_4+abx_5
}\quad \text{
and}\quad
\Psi_{a,b}^{(3)}(\mathbf{x})
=
\frac{
x_4-a x_5-bx_2+abx_7
}{
1-a x_1-bx_3+abx_6
},
\]
then
\(
\Psi_{a,b}^{(2)}\circ h,\,
\Psi_{a,b}^{(3)}\circ h
\in H^\infty(\mathbb D),
\)
with norm at most one, and the corresponding Pick matrices are
positive semidefinite.
\end{thm}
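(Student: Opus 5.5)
The plan is to reduce the statement to the classical scalar Nevanlinna--Pick theorem, using the characterization of $G_{E(3;3;1,1,1)}$ from \cite{apal1} through the Schwarz-type functions $\Psi^{(1)},\Psi^{(2)},\Psi^{(3)}$ of \eqref{Psi_1}--\eqref{Psi_3}. The first step is to identify $\Psi^{(1)}_{a,b}(\mathbf{x})$ with $\Psi^{(1)}(a,b,\mathbf{x})$ (and similarly for the other two), up to a relabelling of the coordinates. The coordinate arrangement in the statement differs from \eqref{Psi_1}, so I would handle it by conjugating $A$ by a permutation matrix $P$. This leaves $\mu_{E(3;3;1,1,1)}$ invariant, since $P^{-1}E(3;3;1,1,1)P=E(3;3;1,1,1)$ and the norm is unitarily invariant. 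It also permutes the principal minors, so it induces a coordinate permutation $\sigma$ of $\mathbb C^7$ preserving $G_{E(3;3;1,1,1)}$. Each $\Psi^{(i)}_{a,b}$ is then one of the standard functions $\Psi^{(k)}(a,b,\cdot)$ composed with such a $\sigma$, or at worst a rearrangement of the roles of $a,b$. Pinning down this bookkeeping, and in particular checking that the displayed formulas really are of this form (or silently correcting the indices to those of \eqref{Psi_1}--\eqref{Psi_3}), is the only place I expect friction.

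Next, fix $\mathbf{x}\in G_{E(3;3;1,1,1)}$. As recalled in the Introduction, the denominators $1-zx_2-wx_4+zwx_6$, and their analogues, do not vanish on $\overline{\mathbb D}^2$. I would then invoke the open-domain version of \cite[Theorem 2.39]{apal1}. It says that $\mathbf{x}\in G_{E(3;3;1,1,1)}$ forces $\|\Psi^{(1)}(\cdot,\mathbf{x})\|_{H^\infty(\mathbb D^2)}<1$, and likewise for $\Psi^{(2)},\Psi^{(3)}$; this is the same mechanism used in Theorem~\ref{Partial Converse 1}, with strict inequality on the open domain. If only the closed version $\le 1$ is available, I would argue as follows. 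The map $\Psi^{(1)}(a,b,\cdot)$ is a nonconstant rational function of $\mathbf{x}$ on the open set $G_{E(3;3;1,1,1)}$, so it is open there. A value of modulus $1$ would then force nearby points of $G_{E(3;3;1,1,1)}\subseteq\Gamma_{E(3;3;1,1,1)}$ to have values of modulus exceeding $1$, which is a contradiction. Hence $|\Psi^{(1)}_{a,b}(\mathbf{x})|<1$ for all $a,b\in\mathbb D$.

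Applying this pointwise to $\mathbf{x}=h(z)$, the function $\Psi^{(1)}_{a,b}\circ h$ is holomorphic on $\mathbb D$, because $h$ is holomorphic and the denominator never vanishes. It also maps $\mathbb D$ into $\mathbb D$. The classical scalar Pick theorem, which is the case $n=1$ of Theorem~\ref{Interpolation in M_n(C)}, applied to the Schur function $\Psi^{(1)}_{a,b}\circ h$ with nodes $z_1,\dots,z_N$ and values $\Psi^{(1)}_{a,b}(h(z_j))$, gives positivity of the displayed Pick matrix. The form $\bigl(1-\overline{w_j}w_i\bigr)/\bigl(1-\overline{z_j}z_i\bigr)$ appearing there is just the transpose of the one in Theorem~\ref{Interpolation in M_n(C)}, and transposition preserves positive semidefiniteness. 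The same argument with $\Psi^{(2)}$ and $\Psi^{(3)}$ gives $\|\Psi^{(2)}_{a,b}\circ h\|_\infty\le 1$, $\|\Psi^{(3)}_{a,b}\circ h\|_\infty\le1$, and positivity of the corresponding Pick matrices.
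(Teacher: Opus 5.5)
Your core argument is the paper's. The Schwarz-type characterization in \cite{apal1} shows that $\Psi^{(1)}(a,b,\cdot)$ maps $G_{E(3;3;1,1,1)}$ into $\mathbb D$, so its composition with $h$ is a Schur function, and the scalar Pick theorem gives the positivity. Your open-mapping fallback for the strict inequality is also sound.

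What fails is your first step. You claim the displayed $\Psi^{(i)}_{a,b}$ are the functions \eqref{Psi_1}--\eqref{Psi_3} precomposed with a coordinate permutation induced by conjugating $A$ with a permutation matrix. Such a conjugation permutes the diagonal entries among themselves and the principal $2\times2$ minors among themselves. So the induced map of $\mathbb C^7$ preserves the index sets $\{1,2,4\}$ and $\{3,5,6\}$ and fixes $x_7$. But the displayed $\Psi^{(1)}_{a,b}$ is $\Psi^{(1)}(a,b,\cdot)$ with $x_2=a_{22}$ and $x_3=\det A_{(12)}$ interchanged. Likewise $\Psi^{(2)}_{a,b}$ interchanges $x_1$ and $x_3$, and $\Psi^{(3)}_{a,b}$ cycles $x_2,x_3,x_6$.

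No symmetry argument can repair this, because the statement as displayed is false. Let $h$ be constant, equal to $\mathbf x=\Pi_{E(3;3;1,1,1)}(\operatorname{diag}(i/2,-2i/3,0))=(i/2,-2i/3,1/3,0,0,0,0)$. This point lies in $G_{E(3;3;1,1,1)}$, since $\mu_{E(3;3;1,1,1)}$ of the matrix is at most its norm $2/3$. Let $a=3/4$. Then $\Psi^{(1)}_{a,b}(\mathbf x)=(i/2+i/2)/(1-1/4)=4i/3$ for every $b\in\mathbb D$. So $\Psi^{(1)}_{a,b}\circ h$ does not map into $\mathbb D$, and the Pick matrix is $-\tfrac79\bigl(1/(1-\overline{z_j}z_i)\bigr)_{i,j=1}^N$, which is negative definite. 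Similarly, $\mathbf x=\Pi_{E(3;3;1,1,1)}(\operatorname{diag}(0,-9/10,9/10))$ with $a=0$, $b=9/10$ gives $\Psi^{(3)}_{a,b}(\mathbf x)=171/100$.

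So the index correction you mention as a fallback is not bookkeeping to be done silently. The theorem holds only when $\Psi^{(k)}_{a,b}(\mathbf x)$ is read as $\Psi^{(k)}(a,b,\mathbf x)$ from \eqref{Psi_1}--\eqref{Psi_3}. This is exactly what the paper's proof tacitly does when it cites \cite[Theorem 2.22]{apal1} for the claim that $\Psi^{(1)}_{a,b}$ maps $G_{E(3;3;1,1,1)}$ into $\mathbb D$. With that correction stated explicitly, your proof is complete and coincides with the paper's. A minor point: since $\overline{w_j}w_i=w_i\overline{w_j}$, the displayed Pick matrix is literally the case $n=1$ of Theorem~\ref{Interpolation in M_n(C)}, with no transposition involved.
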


\begin{proof}
Fix $a,b\in\mathbb D$. By the Schwarz-type characterization of
$G_{E(3;3,1,1,1)}$ (see \cite[Theorem 2.22]{apal1}), the function
\(
\Psi_{a,b}^{(1)}
\)
maps $G_{E(3;3,1,1,1)}$ into $\mathbb D$. Hence
\(
F_{a,b}(z)
=
\Psi_{a,b}^{(1)}(h(z))
\)
belongs to the scalar Schur class. Since
\(
F_{a,b}(z_j)
=
\Psi_{a,b}^{(1)}(h(z_j)),
\)
the classical Nevanlinna--Pick theorem yields
\[
\left(
\frac{
1-\overline{F_{a,b}(z_j)}F_{a,b}(z_i)
}{
1-\overline{z_j}z_i
}
\right)_{i,j=1}^{N}
\geq0.
\]
Substituting the definition of $F_{a,b}$ gives the asserted
positivity.
The assertions concerning $\Psi_{a,b}^{(2)}$ and
$\Psi_{a,b}^{(3)}$ follow in exactly the same manner.
\end{proof}
We next formulate the interpolation theorem for
$G_{E(3;3,1,1,1)}$ in terms of matrix-valued Schur functions. 
We shall use the fact from \cite[Theorem 2.24]{apal1} that 
\[
\Pi_{E(3;3,1,1,1)}
\bigl(B_{M_3(\mathbb C)}(0,1)\bigr)
\subseteq
G_{E(3;3,1,1,1)}.
\]
For $z=(z_1,z_2,z_3)\in\mathbb D^3$, put
\(
\pi_{E(3;3,1,1,1)}(z_1,z_2,z_3)
=
\Pi_{E(3;3,1,1,1)}
\left(\begin{smallmatrix}
z_1&0&0\\
0&z_2&0\\
0&0&z_3
\end{smallmatrix}\right).
\)

\begin{thm}\label{Interpolation in G_{E(3;3,1,1,1)}}
Let
\(
z_1,\ldots,z_N\in\mathbb D
\)
be distinct points, and let
\[
\mathbf{x}_j=(x_{1j},\dots,x_{7j})
\in
\Pi_{E(3;3,1,1,1)}
\bigl(B_{M_3(\mathbb C)}(0,1)\bigr),\quad
1\leq j\leq N.
\]
Suppose that there exists a holomorphic function
\[
h:\mathbb D\longrightarrow
\Pi_{E(3;3,1,1,1)}
\bigl(B_{M_3(\mathbb C)}(0,1)\bigr)
\]
such that
\(
h(z_j)=\mathbf{x}_j,
1\leq j\leq N.
\)
Then there exists a rational
$\Gamma_{E(3;3,1,1,1)}$-inner function
\[
\widetilde h:\mathbb D\longrightarrow
\Gamma_{E(3;3,1,1,1)}
\]
such that
\(
\widetilde h(z_j)=\mathbf{x}_j,
1\leq j\leq N.
\) 

Moreover, the rational $\Gamma_{E(3;3;1,1,1)}$-inner function
$\widetilde h$ admits the following explicit representation. There
exist $n\in\mathbb N$, $\xi\in\mathbb T$, and
$\alpha_1,\ldots,\alpha_n\in\mathbb D$ such that, for each
$1\leq k\leq3$, there exist
\(
t_k,q_k\in\mathbb N\cup\{0\},
r_k\in\mathbb R,
\)
and
\(
\eta_k,\delta_k,\xi_{k,1},\ldots,\xi_{k,t_k}\in\mathbb T,
\beta_{k,1},\ldots,\beta_{k,q_k}\in\mathbb D,
\)
satisfying
\begin{equation}\label{*}
t_k+q_k=n,
\quad
\eta_k\delta_k
\prod_{i=1}^{t_k}\xi_{k,i}
=\xi.
\end{equation}
Then
\(
\widetilde h=(x_1,\ldots,x_7),
\)
where
\begin{equation}\label{x_7 Representation}
x_7(z)
=
\xi\prod_{i=1}^n
\frac{z-\alpha_i}
     {1-\overline{\alpha_i}z},
\end{equation}
and, for $1\leq k\leq3$,
\begin{equation}\label{x_k_x_7-k_Representation}
\begin{aligned}
x_k(z)
=
r_k\eta_k
\frac{
\displaystyle
\left(\prod_{i=1}^{t_k}(\xi_{k,i}+z)\right)
\left(\prod_{i=1}^{q_k}f_{k,i}(z)\right)
}{
\displaystyle
\prod_{i=1}^{n}(\overline{\alpha_i}z-1)
},\quad
x_{7-k}(z)
=
r_k\delta_k
\frac{
\displaystyle
\left(\prod_{i=1}^{t_k}(\xi_{k,i}+z)\right)
\left(\prod_{i=1}^{q_k}g_{k,i}(z)\right)
}{
\displaystyle
\prod_{i=1}^{n}(\overline{\alpha_i}z-1)
},
\end{aligned}
\end{equation}
where, for each $1\leq i\leq q_k$, either
\(
f_{k,i}(z)=\overline{\beta}_{k,i}z-1,
g_{k,i}(z)=\beta_{k,i}-z,
\)
or
\(
f_{k,i}(z)=\beta_{k,i}-z,
g_{k,i}(z)=\overline{\beta}_{k,i}z-1.
\)
In particular, for $z\in\mathbb T$, the factors in
\eqref{x_k_x_7-k_Representation}, together with condition \eqref{*},
give
\(
x_k(z)
=
\overline{x_{7-k}(z)}\,x_7(z),
 1\leq k\leq3.
\)
Moreover,
\(
|x_7(z)|=1, z\in\mathbb T.
\)
Consequently,
\(
\widetilde h(\mathbb T)\subseteq K.
\)
Thus $\widetilde h$ is a rational
$\Gamma_{E(3;3;1,1,1)}$-inner function whose boundary values lie in
$K$.

\end{thm}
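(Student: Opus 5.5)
The plan is to lift the interpolation problem to the matrix ball, solve it there with rational matrix inner functions, and then push it back down with $\Pi_{E(3;3;1,1,1)}$. Since each $\mathbf{x}_j\in\Pi_{E(3;3;1,1,1)}(B_{M_3(\mathbb C)}(0,1))$, I choose matrices $W_j\in B_{M_3(\mathbb C)}(0,1)$ with $\Pi_{E(3;3;1,1,1)}(W_j)=\mathbf{x}_j$.

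\textbf{Step 1: a contractive matrix interpolant.} I would use the hypothesis on $h$ to produce a Schur-class $F:\mathbb D\to M_3(\mathbb C)$ with $F(z_j)=W_j$. Equivalently, I need the block Pick matrix of Theorem~\ref{Interpolation in M_n(C)} to be positive semidefinite. If $h$ factors as $\Pi_{E(3;3;1,1,1)}\circ H$ with $H$ holomorphic into the ball, this is immediate with $W_j=H(z_j)$. Otherwise I would invoke Theorem~\ref{Inter}: it yields $G$ with $\sup\|G\|<1$ and $G(z_j)=X_jW_jX_j^{-1}$. I then replace $W_j$ by $X_jW_jX_j^{-1}$, restricting to similarities that preserve the principal-minor data $\Pi_{E(3;3;1,1,1)}$, for instance diagonal $X_j$. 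I expect this to be the main obstacle. General similarities preserve only the spectral data, not all principal minors, so I would either restrict to diagonal similarities or simply assume the factorization of $h$ through the ball, which the hypothesis suggests.

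\textbf{Step 2: rational inner matrix interpolant.} By McCarthy--Young \cite{McCarthy 1}, a solvable contractive matrix Nevanlinna--Pick problem has a rational matrix inner solution $U$. Thus $U$ is rational, $\|U(z)\|\le 1$ on $\mathbb D$, $U(z)$ is unitary for $z\in\mathbb T$, and $U(z_j)=W_j$. I set $\widetilde h=\Pi_{E(3;3;1,1,1)}\circ U$. Then $\widetilde h$ is rational and $\widetilde h(z_j)=\mathbf{x}_j$. For $z\in\mathbb D$ we have $\|U(z)\|\le1$, hence $\mu_{E(3;3;1,1,1)}(U(z))\le1$ and $\widetilde h(\mathbb D)\subseteq\Gamma_{E(3;3;1,1,1)}$. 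On $\mathbb T$, the last coordinate $x_7=\det U$ is unimodular. By Proposition~\ref{Prop K}, $\widetilde h(z)\in K$, so $\widetilde h$ is a rational $\Gamma_{E(3;3;1,1,1)}$-inner function; alternatively one can use Theorem~\ref{Thm 1}(2), since $\det U$ is a finite Blaschke product.

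\textbf{Step 3: explicit form.} The degree is $n=\deg\det U$, and $x_7=\det U$ takes the Blaschke form \eqref{x_7 Representation}. By Theorem~\ref{Description 1} there is a denominator $D(z)=c\prod(1-\overline{\alpha_i}z)$, and the numerators satisfy $E_k=E_{7-k}^{\sim n}$. I factor each $E_k$ over $\mathbb C$. Its roots on $\mathbb T$ are fixed by the reflection $a\mapsto a^*$ (Remark~\ref{rem:fixed-points}), and these give the common factors $(\xi_{k,i}+z)$. Each root off $\mathbb T$ is paired with its reflection in $E_{7-k}$ (Corollary~\ref{cor:reciprocal-pairing}), and these give the pairs $(f_{k,i},g_{k,i})$ of the two stated types. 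The degree padding gives $t_k+q_k=n$. Comparing leading constants in $E_k=E_{7-k}^{\sim n}$, with the normalisation of $D$ by a real scalar from the uniqueness part of Theorem~\ref{Description 1}, yields $r_k\in\mathbb R$ and the unimodular relation \eqref{*}. Finally, a direct check on $\mathbb T$ shows $\overline{(\xi+z)}=\bar\xi\bar z(\xi+z)$ and $\overline{\beta-z}=\bar z(\bar\beta z-1)$. This verifies $x_k=\overline{x_{7-k}}x_7$ there, and hence $\widetilde h(\mathbb T)\subseteq K$.
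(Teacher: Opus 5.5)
There is a genuine gap in Step 1, exactly where you suspected. The hypothesis says only that $h$ is holomorphic and that each value $h(z)$ lies in $\Pi_{E(3;3;1,1,1)}(B_{M_3(\mathbb C)}(0,1))$. It does not give a holomorphic $H:\mathbb D\to B_{M_3(\mathbb C)}(0,1)$ with $\Pi_{E(3;3;1,1,1)}\circ H=h$. It does not even give a Schur-class $G$ with $\Pi_{E(3;3;1,1,1)}(G(z_j))=\mathbf x_j$ at the nodes, which is all your McCarthy--Young step needs. You can choose a preimage in the ball separately for each $z$, but nothing makes that choice holomorphic. Doing so would require a holomorphic lifting theorem for this domain, which is not available here.

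Your diagonal-similarity fallback does not close the gap. Theorem~\ref{Inter} produces only \emph{some} invertible $X_j$, with no control over them, so you cannot restrict to diagonal ones. For a general $X_j$ one has $\Pi_{E(3;3;1,1,1)}(X_jW_jX_j^{-1})\neq\mathbf x_j$, since already the diagonal entries move. The fibres of $\Pi_{E(3;3;1,1,1)}$ are invariant under diagonal similarities but not under general ones. A version of Costara's theorem with diagonal similarities would be a $\mu_{E(3;3;1,1,1)}$-synthesis interpolation theorem, which is essentially what you are trying to prove.

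Moreover, Theorem~\ref{Inter} as quoted cannot hold without a solvability hypothesis. For $n=1$ it would make every scalar Nevanlinna--Pick problem with targets in $\mathbb D$ solvable, which is false. It must be applied, as the paper does with $\epsilon f$, to data that already admit an analytic $\Omega_3$-valued interpolant, and for your $W_j$ that again means a lift. So, as written, your argument proves the theorem only under the added hypothesis that $h=\Pi_{E(3;3;1,1,1)}\circ H$ with $H$ holomorphic into the ball.

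The paper's proof has the same gap. It opens by positing ``a holomorphic lifting $M$'' with $\Pi_{E(3;3;1,1,1)}\circ M=\mathrm{id}$ and $M\circ h$ ball-valued, without justification. Granting such a lift, your route is the cleaner and more correct one. $H=M\circ h$ is itself a Schur-class interpolant of $W_j=H(z_j)$, so your Step 2 finishes at once. The paper instead detours through $\epsilon f$, Theorem~\ref{Inter} and Montel's theorem, and then asserts that $\Pi_{E(3;3;1,1,1)}$ depends only on spectral data. That is false, for the reason given above.

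Your Step 3 is sound, and more explicit than the paper's appeal to factorizing the nonnegative function $x_kx_{7-k}/x_7$ on $\mathbb T$. The identity $E_k=E_{7-k}^{\sim n}$ by itself forces $E_k=c_k\prod_i(\xi_{k,i}+z)\prod_i f_{k,i}$ and $E_{7-k}=c_{7-k}\prod_i(\xi_{k,i}+z)\prod_i g_{k,i}$, with $\beta=0$ absorbing zeros at the origin and drops in degree. It also gives $c_k=\overline{c_{7-k}}\prod_i\overline{\xi_{k,i}}$. Take $D=c\prod_i(1-\overline{\alpha_i}z)$, so that $\xi=\overline c/c$, and set $r_k=|c_k/c|$. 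Then the relation $\eta_k\delta_k\prod_i\xi_{k,i}=\xi$ is automatic. You therefore need neither a real normalisation of $D$ nor Corollary~\ref{cor:reciprocal-pairing}, whose hypotheses exclude zeros and poles of $x_7$.
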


\begin{proof}
Let $M$ be a holomorphic lifting associated with
$\Pi_{E(3;3,1,1,1)}$, so that
\[
\Pi_{E(3;3,1,1,1)}\circ M=\operatorname{id}
\]
on the relevant domain and
\(
M\circ h:\mathbb D\longrightarrow B_{M_3(\mathbb C)}(0,1).
\)
Put
\(
f=M\circ h.
\)
Then
\(
\|f(z)\|<1, z\in\mathbb D.
\)
In particular,
\(
r(f(z))<1, z\in\mathbb D.
\)
For $0<\epsilon<1$, define
\(
f_\epsilon(z)=\epsilon f(z).
\)
Then
\(
r(f_\epsilon(z))<1,z\in\mathbb D.
\)
By Theorem~\ref{Inter}, there exists a holomorphic function
\[
F_\epsilon:\mathbb D\longrightarrow M_3(\mathbb C)
\]
such that
\(
\sup_{z\in\mathbb D}\|F_\epsilon(z)\|<1
\)
and
\(
F_\epsilon(z_j)
\sim
\epsilon f(z_j),
1\leq j\leq N.
\)
Here $\sim$ denotes similarity.
Since the family $\{F_\epsilon:0<\epsilon<1\}$ is locally bounded,
Montel's theorem yields a sequence
$\epsilon_\nu\to1$ such that
\(
F_{\epsilon_\nu}\longrightarrow F
\)
locally uniformly on $\mathbb D$, where
\(
F:\mathbb D\longrightarrow
\overline{B_{M_3(\mathbb C)}(0,1)}.
\)
Moreover, the similarity relation at the interpolation nodes implies
\[
\sigma(F(z_j))
=
\sigma(f(z_j)),
\quad
1\leq j\leq N.
\]
Since $\Pi_{E(3;3,1,1,1)}$ depends only on the spectral data,
we obtain
\[
\Pi_{E(3;3,1,1,1)}(F(z_j))
=
\Pi_{E(3;3,1,1,1)}(f(z_j))
=
h(z_j).
\]
By the matrix-valued Nevanlinna--Pick interpolation theorem, there
exists a rational matrix inner function
\(
F_0:\mathbb D\longrightarrow M_3(\mathbb C)
\)
which interpolates the same spectral data at the points $z_j$.
Thus,
\[
F_0(z_j)\sim f(z_j),
\quad
1\leq j\leq N.
\]
Define
\(
\widetilde h
=
\Pi_{E(3;3,1,1,1)}\circ F_0.
\)
Since $F_0$ is rational inner, its radial boundary values are unitary
almost everywhere on $\mathbb T$. Consequently, the corresponding
spectral data belong to 
\(
K\subseteq
\Gamma_{E(3;3,1,1,1)}
\)
almost everywhere on $\mathbb T$. Hence
\(
\widetilde h
\)
is a rational
$\Gamma_{E(3;3,1,1,1)}$-inner function.
Finally, at each interpolation node,
\[
\begin{aligned}
\widetilde h(z_j)
&=
\Pi_{E(3;3,1,1,1)}(F_0(z_j))\\
&=
\Pi_{E(3;3,1,1,1)}(f(z_j))\\
&=
h(z_j)\\
&=
\mathbf{x}_j.
\end{aligned}
\]
We now record the rational representation of the interpolating
function. Let
\(
\widetilde h
=
(x_1,\ldots,x_7)
\)
be a rational
$\Gamma_{E(3;3,1,1,1)}$-inner function. Since
$x_7$ is a scalar rational inner function, it is a finite Blaschke
product. Thus
\[
x_7(z)
=
\xi
\prod_{j=1}^{N}
\frac{z-\alpha_j}{1-\overline{\alpha_j}z},
\quad
|\xi|=1,\quad
\alpha_j\in\mathbb D.
\]
The boundary relations for
$\Gamma_{E(3;3,1,1,1)}$ give
\[
x_k(\zeta)
=
\overline{x_{7-k}(\zeta)}\,x_7(\zeta),
\quad
\zeta\in\mathbb T,
\quad
1\leq k\leq3.
\]
Consequently,
\[
\frac{x_k(\zeta)x_{7-k}(\zeta)}
{x_7(\zeta)}
=
|x_{7-k}(\zeta)|^2,
\quad
\zeta\in\mathbb T.
\]
The function on the left-hand side is rational and nonnegative on
$\mathbb T$. By the standard factorization theorem for nonnegative
rational functions on the unit circle (see \cite[p.~137]{Duren}), it
admits a factorization into finite Blaschke factors.
More precisely, for each $1\leq k\leq3$, there exist integers
$t_k,q_k\geq0$ with
\(
t_k+q_k=N,
\)
constants
\(
r_k>0,
|\eta_k|=|\delta_k|=1,
|\xi_{k,j}|=1,
\)
and finite Blaschke factors $f_{k,j}$ and $g_{k,j}$ such that
\[
x_k(z)
=
r_k\eta_k
\frac{
\displaystyle
\prod_{j=1}^{t_k}(\xi_{k,j}+z)
\prod_{j=1}^{q_k}f_{k,j}(z)
}{
\displaystyle
\prod_{j=1}^{N}(1-\overline{\alpha_j}z)
},\quad \text{
and}\quad
x_{7-k}(z)
=
r_k\delta_k
\frac{
\displaystyle
\prod_{j=1}^{t_k}(\xi_{k,j}+z)
\prod_{j=1}^{q_k}g_{k,j}(z)
}{
\displaystyle
\prod_{j=1}^{N}(1-\overline{\alpha_j}z)
}.
\]
The factors may be chosen in either of the two forms
\[
f_{k,j}(z)
=
\overline{\beta_{k,j}}z-1,
\qquad
g_{k,j}(z)
=
\beta_{k,j}-z,
\]
or
\[
f_{k,j}(z)
=
\beta_{k,j}-z,
\qquad
g_{k,j}(z)
=
\overline{\beta_{k,j}}z-1,
\]
where
\(
|\beta_{k,j}|=1.
\)
The unimodular constants satisfy the compatibility condition
\[
\eta_k\delta_k
\prod_{j=1}^{t_k}\xi_{k,j}
=
\xi.
\]
This proves the result.
\end{proof}
The remainder of this section is devoted to analogous interpolation
results for the domain $G_{E(3;2;1,2)}$. We state the results without
proof, as the proofs follow along the same lines as those given for
$G_{E(3;3;1,1,1)}$. The following theorem is obtained by applying the
characterization of $G_{E(3;2;1,2)}$ together with the classical
Nevanlinna--Pick interpolation theorem.

\begin{thm}\label{Interpolation 2}
Let $w_1,\ldots,w_N\in\mathbb D$ be distinct points, and let
\[
(x_{1,j},x_{2,j},x_{3,j},y_{1,j},y_{2,j})
\in G_{E(3;2;1,2)},
\quad 1\leq j\leq N.
\]
Suppose that there exists a holomorphic map
\(
h:\mathbb D\longrightarrow G_{E(3;2;1,2)}
\)
such that
\[
h(w_j)
=
(x_{1,j},x_{2,j},x_{3,j},y_{1,j},y_{2,j}),
\quad 1\leq j\leq N.
\]
Then, for every fixed $z\in\mathbb D$, the Pick matrix
\[
\left(
\frac{1-\overline{\widetilde w_j}\widetilde w_i}
     {1-\overline{w_j}w_i}
\right)_{i,j=1}^{N}
\]
is positive semidefinite, where
\(
\widetilde w_j
=
\Psi_3
\bigl(
z,(x_{1,j},x_{2,j},x_{3,j},y_{1,j},y_{2,j})
\bigr),
1\leq j\leq N,
\)
and
\(
(y_{1,j},y_{2,j})\in G_{E(2;1;2)}.
\)
\end{thm}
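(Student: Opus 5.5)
The plan is to copy the proof of Theorem~\ref{Interpolation 1}: compose $h$ with the Schwarz-type function $\Psi_3(z,\cdot)$ and apply the classical scalar Nevanlinna--Pick theorem. Fix $z\in\mathbb D$. For any point $\widetilde{\mathbf{x}}=(x_1,x_2,x_3,y_1,y_2)\in G_{E(3;2;1,2)}$ the introduction records, via \cite[Theorem 3.5]{Bharali}, that $y_2w^2-y_1w+1\neq0$ for $w\in\overline{\mathbb D}$. Hence $\Psi_3(z,\widetilde{\mathbf{x}})$ is well defined. The same characterization gives $\|\Psi_3(\cdot,\widetilde{\mathbf{x}})\|_{H^\infty(\mathbb D)}<1$ on the open domain, together with $(y_1,y_2)\in G_{E(2;1;2)}$, the latter being the standing normalization mentioned in the statement. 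I would therefore first record the pointwise estimate $|\Psi_3(z,\widetilde{\mathbf{x}})|<1$ for all $z\in\mathbb D$ and all $\widetilde{\mathbf{x}}\in G_{E(3;2;1,2)}$.

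Next, I would define $F_z(w)=\Psi_3(z,h(w))$ for $w\in\mathbb D$. Since $h$ is holomorphic with values in $G_{E(3;2;1,2)}$, the denominator $y_2(w)z^2-y_1(w)z+1$ does not vanish on $\mathbb D$. Thus $F_z$ is a quotient of holomorphic functions with nonvanishing denominator, so it is holomorphic. By the first step it maps $\mathbb D$ into $\mathbb D$, so $F_z$ lies in the scalar Schur class. Its values at the nodes are $F_z(w_j)=\Psi_3(z,(x_{1,j},x_{2,j},x_{3,j},y_{1,j},y_{2,j}))=\widetilde w_j$.

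Finally, I would apply the necessity direction of the classical Nevanlinna--Pick theorem (the case $n=1$ of Theorem~\ref{Interpolation in M_n(C)}) to $F_z$. This shows that the matrix $\bigl((1-\overline{\widetilde w_j}\widetilde w_i)/(1-\overline{w_j}w_i)\bigr)_{i,j=1}^N$ is positive semidefinite, which is the claim.

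The only step needing care is the first. We must make sure the characterization of $G_{E(3;2;1,2)}$ from \cite[Theorem 3.5]{Bharali} (equivalently \cite[Proposition 2.46]{apal1}) really delivers both the nonvanishing of the denominator on $\overline{\mathbb D}$ and the bound $|\Psi_3|\le1$, and not merely membership of $(y_1,y_2)$ in $\Gamma_{E(2;1;2)}$. If only the closed bound $\le1$ is available, that still suffices, since Schur-class functions with values in $\overline{\mathbb D}$ also have positive semidefinite Pick matrices.
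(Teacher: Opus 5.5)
Your proposal is correct and is essentially the paper's argument. The paper omits the proof, saying it follows Theorem~\ref{Interpolation 1}: compose $h$ with $\Psi_3(z,\cdot)$, use the characterization of $G_{E(3;2;1,2)}$ to obtain a Schur-class function, and apply the classical Nevanlinna--Pick theorem. The point you flag is harmless. For $X=\operatorname{diag}(z_1,z_2,z_2)$ one has $\det(I-AX)=(1-y_1z_2+y_2z_2^2)-z_1(x_1-x_2z_2+x_3z_2^2)$, so $\mu_{E(3;2;1,2)}(A)<1$ gives both the nonvanishing of the denominator and the strict bound $|\Psi_3(z_2,\widetilde{\mathbf{x}})|<1$ for all $z_2\in\overline{\mathbb D}$.
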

We next formulate the interpolation theorem for
$G_{E(3;2,1,2)}$ in terms of matrix-valued Schur functions. 
By \cite[Theorem 2.44]{apal1}, we have
\begin{equation}\label{ball inclusion E3212}
\Pi_{E(3;2;1,2)}
\bigl(B_{M_3(\mathbb C)}(0,1)\bigr)
\subseteq
G_{E(3;2;1,2)}.
\end{equation}
For $z=(z_1,z_2)\in\mathbb D^2$, put
\(
\pi_{E(3;3,1,1,1)}(z_1,z_2,z_3)
=
\Pi_{E(3;3,1,1,1)}
\left(\begin{smallmatrix}
z_1&0&0\\
0&z_1&0\\
0&0&z_2
\end{smallmatrix}\right).
\)
Analogously to Theorem~\ref{Interpolation in G_{E(3;3,1,1,1)}},
the following result provides a partial solution to the interpolation
problem by rational
$\Gamma_{E(3;2;1,2)}$-inner functions for collections of points in
$G_{E(3;2;1,2)}$ that are contained in
$\Pi_{E(3;2;1,2)}(B_{M_3(\mathbb C)}(0,1))$. Moreover, the proof
provides an explicit representation of the rational
$\Gamma_{E(3;2;1,2)}$-inner function solving the interpolation
problem.

\begin{thm}\label{Interpolation in G_{E(3;2;1,2)}}
Let $w_1,\ldots,w_N\in\mathbb D$ be distinct points and suppose that
\[
(x_{1,j},x_{2,j},x_{3,j},y_{1,j},y_{2,j})
\in
\Pi_{E(3;2;1,2)}
\bigl(B_{M_3(\mathbb C)}(0,1)\bigr),
\quad 1\leq j\leq N.
\]
Suppose further that there exists a holomorphic map
\(
h:\mathbb D\longrightarrow G_{E(3;2;1,2)}
\)
such that
\[
h(w_j)
=
(x_{1,j},x_{2,j},x_{3,j},y_{1,j},y_{2,j}),
\quad 1\leq j\leq N,
\]
and
\(
h(\mathbb D)
\subseteq
\Pi_{E(3;2;1,2)}
\bigl(B_{M_3(\mathbb C)}(0,1)\bigr).
\)
Then there exists a rational
$\Gamma_{E(3;2;1,2)}$-inner function
\(
\widetilde h:\mathbb D
\longrightarrow
\Gamma_{E(3;2;1,2)}
\)
such that
\[
\widetilde h(w_j)
=
(x_{1,j},x_{2,j},x_{3,j},y_{1,j},y_{2,j}),
\quad 1\leq j\leq N.
\]
Furthermore, $\widetilde h$ admits the following explicit
representation. There exist $n\in\mathbb N$, $\xi\in\mathbb T$, and
$\alpha_1,\ldots,\alpha_n\in\mathbb D$ such that, for each
$k=1,2$, there exist
\(
t_k,q_k\in\mathbb N\cup\{0\},
r_k\in\mathbb R,
\)
and
\(
\eta_k,\delta_k,\xi_{k,1},\ldots,\xi_{k,t_k}\in\mathbb T,
\beta_{k,1},\ldots,\beta_{k,q_k}\in\mathbb D,
\)
satisfying
\begin{equation}\label{parameters E3212}
t_k+q_k=n,
\qquad
\eta_k\delta_k
\prod_{i=1}^{t_k}\xi_{k,i}
=
\xi,
\qquad k=1,2.
\end{equation}
Then
\[
x_3(z)
=
\xi
\prod_{i=1}^{n}
\frac{z-\alpha_i}
     {1-\overline{\alpha_i}z},
\]
and
\begin{equation}\label{x_1_x_2_y_1_y_2 Representation}
\begin{aligned}
&x_1(z)
=
r_1\eta_1
\frac{
\displaystyle
\left(\prod_{i=1}^{t_1}(\xi_{1,i}+z)\right)
\left(\prod_{i=1}^{q_1}f_{1,i}(z)\right)
}{
\displaystyle
\prod_{i=1}^{n}(\overline{\alpha_i}z-1)
},\quad
x_2(z)
=
r_2\eta_2
\frac{
\displaystyle
\left(\prod_{i=1}^{t_2}(\xi_{2,i}+z)\right)
\left(\prod_{i=1}^{q_2}f_{2,i}(z)\right)
}{
\displaystyle
\prod_{i=1}^{n}(\overline{\alpha_i}z-1)
},
\\[2mm]
&y_1(z)
=
r_2\delta_2
\frac{
\displaystyle
\left(\prod_{i=1}^{t_2}(\xi_{2,i}+z)\right)
\left(\prod_{i=1}^{q_2}g_{2,i}(z)\right)
}{
\displaystyle
\prod_{i=1}^{n}(\overline{\alpha_i}z-1)
},\quad
y_2(z)
=
r_1\delta_1
\frac{
\displaystyle
\left(\prod_{i=1}^{t_1}(\xi_{1,i}+z)\right)
\left(\prod_{i=1}^{q_1}g_{1,i}(z)\right)
}{
\displaystyle
\prod_{i=1}^{n}(\overline{\alpha_i}z-1)
}.
\end{aligned}
\end{equation}
Here, for each $1\leq i\leq q_k$, either
\(
f_{k,i}(z)=\overline{\beta}_{k,i}z-1,
g_{k,i}(z)=\beta_{k,i}-z,
\)
or
\(
f_{k,i}(z)=\beta_{k,i}-z,
g_{k,i}(z)=\overline{\beta}_{k,i}z-1.
\)
The rational map
\(
\widetilde h
=
(x_1,x_2,x_3,y_1,y_2)
\)
is the required rational
$\Gamma_{E(3;2;1,2)}$-inner function interpolating the prescribed
data.
\end{thm}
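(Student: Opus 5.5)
The plan is to transport the argument of Theorem~\ref{Interpolation in G_{E(3;3,1,1,1)}} to the five-dimensional setting, working with the map $\Pi_{E(3;2;1,2)}$ and the inclusion \eqref{ball inclusion E3212}.

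\textbf{Lift to the matrix ball.} Since $h(\mathbb D)\subseteq\Pi_{E(3;2;1,2)}(B_{M_3(\mathbb C)}(0,1))$, I will take a holomorphic lifting $f:\mathbb D\to B_{M_3(\mathbb C)}(0,1)$ with $\Pi_{E(3;2;1,2)}\circ f=h$. Then $\|f(z)\|<1$, so $r(f(z))<1$, and in particular $r(f(w_j))<1$ at the nodes.

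\textbf{Produce a contractive interpolant.} I will apply Theorem~\ref{Inter} to the matrices $\epsilon f(w_j)$ with $0<\epsilon<1$. This gives contractive $F_\epsilon$ with $F_\epsilon(w_j)$ similar to $\epsilon f(w_j)$. A Montel limit $\epsilon_\nu\to1$ then yields $F$ with $\|F\|\le 1$ and $\sigma(F(w_j))=\sigma(f(w_j))$.

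\textbf{Rational inner upgrade.} Using Theorem~\ref{Interpolation in M_n(C)} together with the McCarthy--Young result, I will replace $F$ by a rational matrix inner $F_0$ with the same values at the nodes. I then set $\widetilde h=\Pi_{E(3;2;1,2)}\circ F_0$.

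On $\mathbb T$, $F_0$ is unitary. Hence $x_3=\det F_0$ is unimodular there, and $\widetilde h(\mathbb D)\subseteq\Gamma_{E(3;2;1,2)}$ because $F_0$ is contractive. By Theorem~\ref{Thm 2}, $(1)\Leftrightarrow(2)$, $\widetilde h$ is a rational $\Gamma_{E(3;2;1,2)}$-inner function.

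\textbf{Main obstacle: node values.} The delicate point is checking $\widetilde h(w_j)=h(w_j)$. Similarity preserves the characteristic polynomial, which controls $\det A$ and $\operatorname{tr}A$. It does not control $a_{11}$, $a_{22}+a_{33}$, $\det A_{(23)}$, or $\det A_{(12)}+\det A_{(13)}$ separately, since these are not similarity invariants.

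I would first try to arrange that $F_0(w_j)=f(w_j)$ exactly. The classical route is to check the block Pick matrix condition of Theorem~\ref{Interpolation in M_n(C)} for the values $f(w_j)$; this is automatic, because $f$ itself is a Schur function interpolating them. Theorem~\ref{Interpolation in M_n(C)} plus McCarthy--Young then gives a rational inner $F_0$ with $F_0(w_j)=f(w_j)$. This bypasses the similarity step entirely, and it is the version I would actually write. The Costara step is kept only as motivation, parallel to the earlier proof.

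\textbf{Explicit representation.} By Lemma~\ref{Lem 3}, $x_3$ is a finite Blaschke product of some degree $n$. This gives the stated form of $x_3$ with $\xi\in\mathbb T$ and $\alpha_i\in\mathbb D$.

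By Theorem~\ref{Description 2}, I will write $x_1,x_2,y_1,y_2$ over the common denominator $\prod(\overline{\alpha_i}z-1)$, with numerators of degree at most $n$ satisfying $E_1=F_2^{\sim n}$ and $E_2=F_1^{\sim n}$. Each pair is then factored over its roots:
\begin{itemize}
\item roots on $\mathbb T$ are shared, giving the factors $(\xi_{k,i}+z)$;
\item roots off $\mathbb T$ occur in reflected pairs $\beta$, $1/\overline\beta$, which are distributed as the factors $f_{k,i},g_{k,i}$;
\item the leading constants are collected into $r_k\eta_k$ and $r_k\delta_k$.
\end{itemize}
Comparing the boundary relations $x_1=\overline{y_2}x_3$ and $x_2=\overline{y_1}x_3$ on $\mathbb T$ yields $t_k+q_k=n$ and the unimodular compatibility condition \eqref{parameters E3212}.
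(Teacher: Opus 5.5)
Your route departs from the paper's at the decisive step, and yours is the correct one. The paper proves this theorem only by reference to the $G_{E(3;3;1,1,1)}$ case. That proof uses Costara's theorem and a Montel limit to get $F$ with $\sigma(F(w_j))=\sigma(f(w_j))$. It then asserts that $\Pi$ ``depends only on the spectral data'' and that the matrix Pick theorem yields $F_0(w_j)\sim f(w_j)$.

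Both claims are false, for the reason you give: $a_{11}$, $a_{22}+a_{33}$, $\det A_{(23)}$ and $\det A_{(12)}+\det A_{(13)}$ are not similarity invariants. A permutation conjugating $\operatorname{diag}(\tfrac12,0,0)$ to $\operatorname{diag}(0,\tfrac12,0)$ already changes $\Pi_{E(3;2;1,2)}$. Moreover, equal spectra do not even give similarity.

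Your fix is to apply the block Pick theorem and its rational-inner refinement directly to $W_j=f(w_j)$. This is legitimate because $f$ is itself a Schur interpolant, and it gives $F_0(w_j)=f(w_j)$, hence $\widetilde h(w_j)=h(w_j)$. Once you have a norm-ball lift, the Costara/Montel step is exactly where the node values get lost, so delete it rather than keep it as motivation.

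Two small additions. First, the inner property follows directly from $\operatorname{adj}U=(\det U)\,U^*$ for unitary $U$. This gives $\det U_{(23)}=\det U\,\overline{u_{11}}$ and $\det U_{(12)}+\det U_{(13)}=\det U\,\overline{(u_{22}+u_{33})}$, so $\Pi_{E(3;2;1,2)}(U)\in K_1$ without appealing to Theorem~\ref{Thm 2}. Second, $n\geq 1$ because $|x_{3,j}|=|\det f(w_j)|<1$.

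For the representation, factoring the numerators from Theorem~\ref{Description 2} via the reflection relation is more direct than the paper's appeal to factorization of nonnegative rational functions on $\mathbb T$ (whose write-up even has $|\beta_{k,j}|=1$). One correction: once you use the denominator $D_0=\prod_i(\overline{\alpha_i}z-1)$, you have $x_3=\xi D_0^{\sim n}/D_0$. The numerators over $D_0$ then satisfy $E_1=\xi F_2^{\sim n}$ and $E_2=\xi F_1^{\sim n}$, not $E_i=F_{3-i}^{\sim n}$. That factor $\xi$ is what produces $\eta_k\delta_k\prod_i\xi_{k,i}=\xi$ rather than $1$.

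The step that remains unjustified, in your proposal exactly as in the paper, is the holomorphic lift. The hypothesis $h(\mathbb D)\subseteq\Pi_{E(3;2;1,2)}(B_{M_3(\mathbb C)}(0,1))$ gives a preimage in the ball for each $z$ separately. Nothing makes these choices depend holomorphically on $z$; the paper simply posits ``a holomorphic lifting $M$ with $\Pi\circ M=\mathrm{id}$''.

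What your argument actually uses is less than a full lift. You need matrices $W_j$ with $\|W_j\|\le 1$, $\Pi_{E(3;2;1,2)}(W_j)=(x_{1,j},x_{2,j},x_{3,j},y_{1,j},y_{2,j})$, and positive semidefinite block Pick matrix $\bigl((I-W_iW_j^*)/(1-w_i\overline{w_j})\bigr)_{i,j=1}^{N}$. Even this does not follow from the stated hypotheses without some lifting theorem for $\Pi_{E(3;2;1,2)}$, and none is supplied. So your proof is complete under the additional hypothesis that $h(w_j)=\Pi_{E(3;2;1,2)}(f(w_j))$ for some analytic $f:\mathbb D\to\overline{B_{M_3(\mathbb C)}(0,1)}$, and you should state it that way.
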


\begin{rem}\label{Rem 5}
The results obtained in this section provide a partial solution to
the interpolation problem by rational
$\Gamma_{E(3;3;1,1,1)}$-inner and
$\Gamma_{E(3;2;1,2)}$-inner functions. More precisely, we construct
rational $\Gamma_{E(3;3;1,1,1)}$-inner functions and rational
$\Gamma_{E(3;2;1,2)}$-inner functions interpolating prescribed data
whenever the corresponding interpolation points belong to
\[
\Pi_{E(3;3;1,1,1)}
\bigl(B_{M_3(\mathbb C)}(0,1)\bigr)\quad \text{
and}\quad
\Pi_{E(3;2;1,2)}
\bigl(B_{M_3(\mathbb C)}(0,1)\bigr),
\]
respectively. Thus, the general interpolation problem by rational
$\Gamma_{E(3;3;1,1,1)}$-inner and rational
$\Gamma_{E(3;2;1,2)}$-inner functions for arbitrary interpolation
points in
$G_{E(3;3;1,1,1)}$ and
$G_{E(3;2;1,2)}$, respectively, remains open.
\end{rem}

\end{document}